\renewcommand{\parallel}{\hbox{/\kern -2pt/}}
\newtheorem{theorem}{Theorem}[section]
\newtheorem{lemma}[theorem]{Lemma}
\newtheorem{corollary}[theorem]{Corollary}
\newtheorem{proposition}[theorem]{Proposition}
\newtheorem{conjecture}[theorem]{Conjecture}
\theoremstyle{definition}
\newtheorem{definition}[theorem]{Definition}
\newtheorem{question}[theorem]{Question}
\theoremstyle{remark}
\newtheorem{remark}[theorem]{Remark}
\newtheorem{construction}[theorem]{Construction}
\numberwithin{equation}{section}
\newcommand{\Z}{\mathbb{Z}}
\newcommand{\N}{\mathbb{N}}
\title{$L^p$ Coarse Baum-Connes Conjecture and $K$-theory for $L^p$ Roe Algebras}
\begin{document}

\author{Jianguo Zhang}
\address{Shanghai Center for Mathematical Sciences, Fudan University}
\email{jgzhang15@fudan.edu.cn}

\author{Dapeng Zhou}
\address{Research Center for Operator Algebras, East China Normal University}
\email{dpzhou@math.ecnu.edu.cn}

\maketitle
\begin{abstract}In this paper, we verify the $L^p$ coarse Baum-Connes conjecture  for spaces with finite asymptotic dimension for $p\in[1,\infty)$. We also show that the $K$-theory of $L^p$ Roe algebras are independent of $p\in(1,\infty)$ for spaces with finite asymptotic dimension.
\end{abstract}
\pagestyle{plain}
\section{Introduction}

An elliptic differential operator on a closed manifold is Fredholm. The celebrated Atiyah-Singer index theorem computes the Fredholm index \cite{AtiyahSingerI} \cite{AtiyahSingerIII}. In the recent 40 years, the Atiyah-Singer index theorem has been vastly generalized to the higher index theory \cite{WillettYuBook}\cite{YuICM}. There are two most important cases. For a manifold carrying a proper cocompact  group action, the Baum-Connes assembly map  defines a higher index in the $K$-theory of the group $C^\ast$-algebra \cite{MischenkoFumenko}\cite{Kasparov88}. For an open manifold without group actions, the coarse Baum-Connes assembly  map  defines a higher index in the $K$-theory of the Roe algebra of the manifold \cite{Roe88}.

The Baum-Connes conjecture \cite{BaumConnesHigson} and the coarse Baum-Connes conjecture \cite{HigsonRoeCBC}\cite{YuCBC} give algorithms to compute the higher indices using $K$-homology. The $K$-homology is local and much more computable. In recent years, the $L^p$ version of  the Baum-Connes and the coarse Baum-Connes conjecture are studied. The motivation for using
Banach algebras  is that they are more flexible than  $C^\ast$-algebras. The traditional $C^\ast$-algebraic method \cite{Kasparov88} is very difficult for dealing with groups with property (T) (these groups admits no proper isometric actions on Hilbert spaces).  Actually a lot of interesting groups, e.g. hyperbolic group, may have property (T). Lafforgue invented the Banach $KK$-theory and verified the Baum-Connes conjecture for a large class of groups with property (T) \cite{LafforgueInven}. In \cite{YuHyperbolicLpAction}, Guoliang Yu proved that hyperbolic groups always admit proper isometric actions on $\ell^p$ spaces. In \cite{KasparovYuPBC}, Kasparov and Yu proved that the $L^p$ Baum-Connes conjecture is true for groups with a proper isometric action on $\ell^p$ space.

In \cite{LiaoYu}, Benben Liao and Guoliang Yu proved that the $K$-theory of $L^p$ group algebras are independent of $p$ for a large class of groups, e.g. hyperbolic groups. Their proof relies on Lafforgue's results on the Baum-Connes conjecture \cite{LafforgueInven} and $L^p$ property (RD) for the group.

Yeong-Chyuan Chung developed a quantitative $K$-theory for Banach algebras \cite{ChungQuantiativeBanach} and applied this theory to compute $K$-theory of $L^p$ crossed products \cite{ChungDynamical}.
Chung showed that the $L^p$ Baum-Connes conjecture for $G$ with coefficient in $C(X)$ is true if the dynamical system $G\curvearrowright X$ has Finite Dynamical Complexity, a property introduced by Guentner-Willett-Yu \cite{GuentnerWillettYuFDC} and obtained a partial answer about $p$-independence of $L^p$ crossed products.

Motivated by Liao-Yu and Chung's result, we ask the following question: Are the $K$-theory of $L^p$ Roe algebras $B^p(X)$ independent of $p$? The main theorem of the paper provides a positive answer to this question for the spaces with finite asymptotic dimension.
\begin{theorem}[see Theorem \ref{Thm:MainThmPIndpendency}]
    Let $X$ be a proper metric space. If $X$ has finite asymptotic dimension, then $K_*(B^p(X))$ does not depend on $p$ for $p\in(1,\infty)$
\end{theorem}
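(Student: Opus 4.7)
The plan is to reduce the theorem to the $L^p$ coarse Baum--Connes conjecture, which is the first main result advertised in the abstract. The point is that the left-hand side of the coarse Baum--Connes assembly map is purely topological: it is the inductive limit $KX_*(X) := \varinjlim_{R\to\infty} K_*(P_R(X))$ over the Rips complexes of $X$, which carries no $p$-dependence. Assuming the $L^p$ coarse Baum--Connes conjecture for $X$, for every $p\in[1,\infty)$ there is an assembly isomorphism
\[
\mu_p : KX_*(X) \longrightarrow K_*(B^p(X)).
\]
For any two exponents $p,q \in (1,\infty)$ the composition $\mu_q \circ \mu_p^{-1}$ is then the desired isomorphism $K_*(B^p(X)) \cong K_*(B^q(X))$. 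So structurally, the entire theorem will be a formal corollary once the $L^p$ coarse Baum--Connes conjecture is in hand.

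To make this rigorous I would first recall the construction of $\mu_p$. The standard route is to introduce an $L^p$ localization algebra $B^p_L(X)$ together with an evaluation homomorphism $\mathrm{ev}: B^p_L(X) \to B^p(X)$, and to show that the composition $KX_*(X) \cong K_*(B^p_L(X)) \xrightarrow{\mathrm{ev}_*} K_*(B^p(X))$ coincides with $\mu_p$. The identification $K_*(B^p_L(X)) \cong KX_*(X)$ is established by a Mayer--Vietoris induction on the asymptotic dimension, and at each inductive step the gluing is governed only by coarse topological data of $X$ and not by the choice of $p$. This is precisely what one needs in order to treat the left-hand side as a single object independent of $p$, so that the composition above is well defined.

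The step I expect to be the main obstacle is verifying this naturality rigorously. Unlike the $C^*$-setting, there is no obvious norm-decreasing map $B^p(X) \to B^q(X)$, so a direct comparison between the algebras themselves is unavailable; the only bridge between $B^p(X)$ and $B^q(X)$ passes through $KX_*(X)$, and hence one needs the assembly identifications to be genuinely canonical rather than merely existing. Concretely, one has to check that the five lemma arguments in the Mayer--Vietoris inductive proof of the $L^p$ coarse Baum--Connes conjecture factor through maps of $K$-homology that do not depend on $p$. Once this compatibility is pinned down, the restriction to $p\in(1,\infty)$ in the statement is inessential: it comes for free from the broader range $[1,\infty)$ in which the $L^p$ coarse Baum--Connes conjecture is established, and the proof of $p$-independence is then just the two-line composition of assembly isomorphisms described above.
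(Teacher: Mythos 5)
Your high-level strategy matches the paper's: reduce to the $L^p$ coarse Baum--Connes conjecture (which is Theorem \ref{Thm:LpBCCFAD}) and then show the left-hand side of the assembly map does not depend on $p$. But you have misidentified where the real work lies, and what you dismiss as a ``naturality'' check is in fact the core technical content of the theorem.

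The gap is this: in the $L^p$ setting the left-hand side of the assembly map is \emph{not} a purely topological object that comes with no $p$-dependence. The paper formulates the $L^p$ coarse Baum--Connes conjecture with left-hand side $\lim_{k} K_*(B^p_L(N_{C_k}))$, i.e.\ the $K$-theory of the $L^p$ \emph{localization algebra}, and this localization algebra is built from operators on $L^p$-modules. There is no a priori identification of $K_*(B^p_L(X))$ with the $C^*$-algebraic $K$-homology $KX_*(X)$, so you cannot simply write $\mu_q\circ\mu_p^{-1}$ as a composition of isomorphisms through a fixed object. Establishing that $K_*(B^p_L(X))$ is independent of $p$ for a finite-dimensional simplicial complex is precisely what the paper does in Section 5 (Propositions \ref{Thm:pIndependentLocalization} and \ref{Thm:pIndenpendentStarLocalization}), and the machinery required is nontrivial: one must introduce the \emph{dual} $L^p$ Roe and localization algebras $B^{p,\ast}_L(X)$, use the Riesz--Thorin interpolation theorem to produce a comparison homomorphism $\psi: B^{p,\ast}_L(X)\to C^*_L(X)$, and use Phillips's approximation result for compact dual-operators on $\ell^p$ to handle the zero-dimensional base case of the Mayer--Vietoris induction. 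None of this appears in your proposal; ``the gluing is governed only by coarse topological data'' is not a substitute for it, because the objects being glued are themselves $L^p$-dependent algebras whose $K$-theory has to be compared across $p$.

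A second, related error: you claim the restriction to $p\in(1,\infty)$ is ``inessential'' and ``comes for free.'' It is the opposite -- it is essential. The $L^p$ coarse Baum--Connes conjecture does hold for all $p\in[1,\infty)$, but the comparison of the localization side across $p$ fails at $p=1$: Phillips showed that the finite-rank approximation of compact dual-operators (Lemma \ref{Lemma:LpCompact}) is false on $\ell^1$, so the base case $K_*(\mathcal{K}^*(\ell^p))\cong\mathbb{Z}$ is unavailable there, and the interpolation between $p$ and its conjugate $q$ also degenerates. The restriction to $p\in(1,\infty)$ is forced by the left-hand-side argument, not inherited passively from the coarse Baum--Connes result.

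So: your skeleton is right, but the claim that ``the entire theorem will be a formal corollary once the $L^p$ coarse Baum--Connes conjecture is in hand'' is not. Roughly half the paper's work -- the dual algebra formalism and the interpolation argument -- is exactly what is needed to make your step ``$K_*(B^p_L(X))\cong KX_*(X)$'' true, and that is the part your proposal leaves out.
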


The proof of the theorem relies on the $L^p$ coarse Baum-Connes conjecture. The key ingredient is the Mayer-Vietoris argument. A coarse geometric Mayer-Vietoris sequence in $K$-theory  was formulated by Higson-Roe-Yu \cite{HigsonRoeYuCoarseMV}. In \cite{YuFAD}, Guoliang Yu invented the quantitative $K$-theory and a quantitative Mayer-Vietoris sequence, and he verified the coarse Baum-Connes conjecture  for spaces with finite asymptotic dimension.
The quantitative $K$-theory is a refined version of the classical operator $K$-theory. It encodes more geometric information, and it is a powerful tool to compute the $K$-theory of Roe algebras or other $C^\ast$-algebras coming from geometry.
The quantitative $K$-theory has been generalized to general geometric $C^\ast$-algebras by Oyono-Oyono and Yu \cite{OyonoYuQuantitative}\cite{OyonoYuPersistence}\cite{OyonoYuKunneth}, to Banach algebras by Yeong-Chyuan Chung \cite{ChungQuantiativeBanach}, and to groupoids by Clement Dell'Aiera \cite{DellAieraControlledGroupoid}.
It has many important applications in dynamical systems \cite{GuentnerWillettYuFDC}\cite{ChungDynamical} and coarse geometry \cite{LiWillett}\cite{ChungLiLpUniformRoe}.
 In this paper, by a similar argument of quantitative $K$-theory for $L^p$ algebras, we prove the following result.

\begin{theorem}[see Theorem \ref{Thm:LpBCCFAD}]
For any $p\in [1,\infty)$, the $L^p$ coarse  Baum-Connes conjecture holds for proper metric spaces with finite asymptotic dimension.
\end{theorem}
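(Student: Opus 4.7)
The plan is to adapt Yu's proof of the coarse Baum-Connes conjecture for spaces of finite asymptotic dimension \cite{YuFAD} to the $L^p$ setting, following an induction on the asymptotic dimension. The key technical device will be a quantitative $K$-theory with Mayer-Vietoris for $L^p$ Roe algebras, built in the spirit of Chung's quantitative $K$-theory for Banach algebras \cite{ChungQuantiativeBanach}.

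First I would set up the $L^p$ analogs of the basic objects: the $L^p$ Roe algebra $B^p(X)$, its localization algebra $B^p_L(X)$, and the $L^p$ coarse assembly map $\mu^p\colon K_*(B^p_L(X))\to K_*(B^p(X))$. The left-hand side should be shown to be independent of $p$ and to agree with the coarse $K$-homology of $X$ by the standard cutting-and-pasting arguments for localization algebras, so the problem reduces to proving that $\mu^p$ is an isomorphism. Next, I would introduce quantitative $K$-groups $K_*^{\varepsilon,r,N}(B^p(X))$ parametrized by propagation $r$, norm $N$, and tolerance $\varepsilon$, together with quantitative versions of the assembly map. The heart of the argument is a controlled Mayer-Vietoris exact sequence for these groups: for any coarse decomposition $X = Y \cup Z$ with sufficiently separated complements, there is a six-term controlled exact sequence relating the quantitative $K$-theories of $Y$, $Z$, $Y\cap Z$ and $X$, compatible with the quantitative assembly maps.

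With these tools in hand, the induction on $n=\asdim(X)$ runs as follows. The base case $n=0$ reduces to a disjoint union of uniformly bounded spaces, for which the assembly map can be computed directly via an Eilenberg-swindle argument. For the inductive step, at each scale I would use the asymptotic dimension hypothesis to decompose $X$ into subsets $Y$ and $Z$ such that $Y$, $Z$, and $Y\cap Z$ each have asymptotic dimension at most $n-1$ at a coarser scale. Applying the controlled Mayer-Vietoris sequence together with a controlled five-lemma and the inductive hypothesis on the pieces yields that $\mu^p$ is a quantitative isomorphism at each scale; passing to the limit produces the ordinary $K$-theoretic isomorphism.

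The main obstacle will be establishing the quantitative Mayer-Vietoris exact sequence in the $L^p$ setting. In the $C^*$-algebra framework one relies freely on holomorphic functional calculus, on quasi-central approximate units with small commutators, and on self-adjointness of projections, none of which is directly available for a general Banach algebra. These must be replaced by the Banach-algebraic substitutes of Chung --- smooth quasi-idempotents in place of orthogonal projections, smooth quasi-invertibles in place of unitaries --- with explicit norm bounds tracked as functions of $p$ throughout the construction. The cutting operations themselves are well behaved, since multiplication by indicator functions is a contraction on $L^p$, so once the controlled estimates have been pushed through, the structural part of Yu's induction should carry over without essential change.
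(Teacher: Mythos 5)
Your high-level strategy is in the right ballpark: the paper does indeed build a controlled (quantitative) $K$-theory for $L^p$ algebras following Chung, establishes a controlled cutting-and-pasting (Mayer--Vietoris) exact sequence, uses an Eilenberg swindle for the zero-dimensional base case, and finishes with a controlled five lemma. However, the paper's induction is not on the asymptotic dimension of the coarse space $X$, and in fact the precise coarse decomposition you propose --- $X = Y\cup Z$ with $Y,Z,Y\cap Z$ all of asymptotic dimension $\le n-1$ at a coarser scale --- is not what the finite asymptotic dimension hypothesis directly gives you, and it is not what the paper uses. Instead, the paper first reduces the $L^p$ coarse Baum--Connes conjecture (formulated as an isomorphism of $\lim_k K_*(B^p_L(N_{C_k}))\to \lim_k K_*(B^p(N_{C_k}))$ over an anti-\v{C}ech system of nerve complexes $N_{C_k}$, not as your $\mu^p:K_*(B^p_L(X))\to K_*(B^p(X))$) to the vanishing of the obstruction group $\lim_n K_*(B^p_{L,0}(N_{C'_n}))$. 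The nerves are finite-dimensional simplicial complexes of dimension $\le m =\asdim(X)$, and the key vanishing lemma (Lemma \ref{Lemma:VanishingControlledObstruction}) is proved by induction on the simplicial dimension of the $\ell^1$-metric complex, cutting the top skeleton into barycentric cores $X_1$ (strongly Lipschitz homotopy equivalent to a discrete set) and their complement $X_2$ (strongly Lipschitz homotopy equivalent to the lower skeleton). The asymptotic dimension hypothesis is used once more in a genuinely different way: through Lemma \ref{Lemma:PropagationArbitrarySmall}, which produces partition-of-unity maps $G_n:N_{C'_{n_0}}\to N_{C'_n}$ with propagations tending to zero, so that any controlled obstruction can be pushed forward to a nerve where it has propagation small enough for the vanishing lemma to apply. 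Your proposal is conceptually closer to the finite decomposition complexity / finite dynamical complexity route of Guentner--Tessera--Yu and Chung; the paper explicitly flags that this alternative would require bounded geometry, whereas the nerve-complex argument it actually runs does not, which is why it takes the route it does.
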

The result is very similar to Chung's result on Baum-Connes conjecture with coefficient for dynamical systems with finite dynamical complexity \cite{ChungDynamical}. His result is for dynamical systems or transformation groupoids, while our result is for coarse geometry or coarse groupoids.

We want to emphasize that the results in this paper do not need the condition of bounded geometry. For the similar result for spaces with bounded geometry, we could generalize the result to spaces with finite decomposition complexity, introduced by Erik Guentner, Romain Tessera and Guoliang Yu \cite{GuentnerTesseraYuRigidity}\cite{GuentnerTesseraYuGeometric}.  Our method also works for uniform $L^p$ Roe algebras. We will study the results in a separate paper.

The paper is organized in the following order: In Section 2, we recall the concept of $L^p$ Roe algebras,  $L^p$ localization algebras and $L^p$ coarse Baum-Connes conjecture. In Section 3, we study the Quantitative $K$-theory for $L^p$ algebras. In Section 4, we prove that the $L^p$ Baum-Connes conjecture is true for spaces with finite asymptotic dimension for $p\in [1,\infty)$. In Section 5, we prove that the $K$-theory of $L^p$ Roe algebras are independent of $p\in (0,1)$ for spaces with finite asymptotic dimensions. In the end, we raise some open problems for future study.

\subsection*{Acknowledgements}
    The authors would like to thank our Ph.D. advisor, Guoliang Yu, for suggesting us this project and his guidance. We are also grateful to the referee for carefully reading our manuscript and providing many valuable comments.

\section{$L^p$ Coarse Baum-Connes Conjecture}

Let $X$ be a proper metric space, $p\in [1,\infty)$. Recall that a metric space is called proper if every closed ball is compact. The proper metric space is a separable space, since compact metric space is separable.
\begin{definition}\label{Def:Lp-module}
An $L^p$-$X$-module is an $L^p$ space $E_X^p=\ell^p(Z_X)\otimes \ell^p=\ell^p(Z_X,\ell^p)$ equipped with a natural point-wise multiplication action of $C_0(X)$ by restricting to $Z_X$, where $Z_X$ is a countable dense subset in $X$, $\ell^p=\ell^p(\N)$ and $C_0(X)$ is the algebra of all complex-valued continuous functions on $X$ which vanish at infinity.
\end{definition}
We notice that this action can be extended naturally to the algebra of all bounded Borel functions on $X$.

\subsection{$L^p$ Roe algebra}

\begin{definition}
Let $E_X^p$  be an $L^p$-$X$-module and  $E_Y^p$ be an $L^p$-$Y$-module,  and let $T:E_X^p \rightarrow E_Y^p$ be a bounded linear operator. The \textit{support} of $T$, denoted $\mathrm{supp}(T)$, consists of all points $(x,y)\in X\times Y$ such that $\chi_VT\chi_U \not=0$ for all open neighbourhoods $U$ of $x$ and $V$ of $y$, where $\chi_U$ and $\chi_V$ are the characteristic functions of $U$ and $V$, respectively.
\end{definition}

    Please note that the support defined in \cite{WillettYuBook} is the inverse of ours.

    We give some properties of the support, the proof can be obtained similarly from chapter 4 of \cite{WillettYuBook}.
\begin{remark}\label{Remark:support}
Let $E_X^p$  be an $L^p$-$X$-module, $E_Y^p$  be an $L^p$-$Y$-module, and $E_Z^p$  be an $L^p$-$Z$-module. Let $R,S:E_X^p\rightarrow E_Y^p$ and $T:E_Y^p\rightarrow E_Z^p$ be bounded linear operators. Then:
  \begin{enumerate}
  \item $\mathrm{supp}(R+S) \subseteq \mathrm{supp}(R) \cup \mathrm{supp}(S)$;
  \item $\mathrm{supp}(TS) \subseteq \mathrm{cl}(\mathrm{supp}(S) \circ \mathrm{supp}(T))=\mathrm{cl}(\{(x,z)\in X\times Z: \exists\, y\in Y \text{ s.t. } (x,y)\in \mathrm{supp}(S)$, $(y,z)\in \mathrm{supp}(T)\})$, where `$\mathrm{cl}$' means closure;
  \item If the coordinate projections $\pi_Y$: $\mathrm{supp}(T)\rightarrow Y$ and $\pi_Z: \mathrm{supp}(T)\rightarrow Z$ are proper maps, or coordinate projections $\pi_X: \mathrm{supp}(S)\rightarrow X$ and $\pi_Y: \mathrm{supp}(S)\rightarrow Y$ are proper maps, then $\mathrm{supp}(TS) \subseteq \mathrm{supp}(S) \circ \mathrm{supp}(T)$;
  \item Let $F$ = supp($S$), then for any compact subset $K$ of $X$, respectively $Y$, we have $S\chi_K=\chi_{K\circ F}S\chi_K$, $\chi_KS=\chi_KS\chi_{F\circ K}$, where $K\circ F:=$\{$y\in Y:$ there is $x\in K$ such that $(x,y)\in F$\}, $F\circ K:=$\{$x\in X:$ there is $y\in K$ such that $(x,y)\in F$\}.
  \end{enumerate}
\end{remark}

\begin{definition}
Let $E_X^p$ be an $L^p$-$X$-module and $T$ be a bounded linear operator acting on $E_X^p$.
  \begin{enumerate}
  \item The \textit{propagation} of $T$ is defined to be $\mathrm{prop}(T)=\sup\{d(x,y):(x,y)\in \mathrm{supp}(T)\}$;
  \item $T$ is said to be \textit{locally compact} if $\chi_K T$ and $T \chi_K$ are compact operators for all compact subset $K$ of $X$.
  \end{enumerate}
\end{definition}
    By Remark \ref{Remark:support}, We have the following properties of propagation.
\begin{remark}\label{Remark:propagation}
Let $E_X^p$ be an $L^p$-$X$-module and let $T,S:E_X^p\rightarrow E_X^p$ be bounded linear operators. Then:
  \begin{enumerate}
  \item $\mathrm{prop}(T+S) \leq \max\{\mathrm{prop}(T),\mathrm{prop}(S)\}$;
  \item $\mathrm{prop}(TS) \leq \mathrm{prop}(T) + \mathrm{prop}(S)$.
  \end{enumerate}
\end{remark}

\begin{definition}
Let $E_X^p$ be an $L^p$-$X$-module. The $L^p$ Roe algebra of $E_X^p$, denoted $B^p(E_X^p)$, is defined to be the norm closure of the algebra of all locally compact operators acting on $E_X^p$ with finite propagations.
\end{definition}

A Borel map $f$ from a proper metric space $X$ to another proper metric space $Y$ is called \textit{coarse} if (1) $f$ is proper, i.e., the inverse image of any bounded set is bounded; (2) for every $R>0$, there exists $R'>0$ such that $d(f(x),f(y))\leq R'$ for all $x,y\in X$ satisfying $d(x,y)\leq R$.

\begin{lemma}\label{Lemma:CoveringIsometry}
Let $f$ be a continuous coarse map, let $E_X^p$ be an $L^p$-$X$-module and  $E_Y^p$ be an $L^p$-$Y$-module. Then for any $\varepsilon>0$, there exist an isometric operator $V_f:E_X^p\rightarrow E_Y^p$ and a contractive operator $V_f^+:E_Y^p\rightarrow E_X^p$ with $V_f^+V_f=I$ such that
  \begin{center}
  $\mathrm{supp}(V_f) \subseteq \{(x,y)\in X\times Y:d(f(x),y)\leq \varepsilon\}$ \\
  $\mathrm{supp}(V_f^+) \subseteq\{(y,x)\in Y\times X:d(f(x),y)\leq \varepsilon\}$
  \end{center}
\end{lemma}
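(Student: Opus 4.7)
The plan is to build $V_f$ as an $\ell^p$-direct sum of fiberwise isometries indexed by a Borel partition of $X$ whose pieces have small $f$-image, and then to take $V_f^+$ as the evident contractive left inverse.

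First, I would use the continuity of $f$ together with the separability of the proper space $X$ to produce a countable Borel partition $\{U_n\}_{n\in\N}$ of $X$ with $\operatorname{diam}(f(U_n))<\varepsilon/2$ for each $n$. To get this, cover $X$ by open balls on which $f$ oscillates by less than $\varepsilon/2$ (possible by pointwise continuity), extract a countable subcover (possible by separability), and disjointify to obtain a Borel partition. Discard those $U_n$ with $U_n\cap Z_X=\emptyset$, reindex, and for each remaining $n$ pick $y_n\in Z_Y$ with $d(y_n,f(U_n\cap Z_X))<\varepsilon/2$; density of $Z_Y$ in $Y$ makes this possible, and a small perturbation within $Z_Y$ allows us to take the $y_n$ pairwise distinct while preserving the distance bound.

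Second, I would define $V_f$ fiberwise. Since $U_n\cap Z_X$ is countable, the restriction of $E_X^p$ to $U_n\cap Z_X$ is $\ell^p(U_n\cap Z_X,\ell^p)$, which is isometrically isomorphic to $\ell^p$. Choose a surjective linear isometry $\iota_n\colon\ell^p(U_n\cap Z_X,\ell^p)\to(\ell^p)_{y_n}$, where $(\ell^p)_{y_n}$ denotes the $\ell^p$-fiber of $E_Y^p$ at $y_n$, and set $V_f=\bigoplus_n\iota_n$. Because the $y_n$ are pairwise distinct, the fibers $(\ell^p)_{y_n}$ sit as pairwise disjoint coordinate blocks in $E_Y^p$, so the $\ell^p$-direct sum is indeed an isometry $E_X^p\to E_Y^p$.

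Third, I would define $V_f^+$ as $\iota_n^{-1}$ on $(\ell^p)_{y_n}$ and as $0$ on the complementary subspace $\bigoplus_{y\in Z_Y\setminus\{y_n\}}(\ell^p)_y$ of $E_Y^p$. Since the coordinate projection of $E_Y^p$ onto $\bigoplus_n(\ell^p)_{y_n}$ is contractive in $\ell^p$-norm and the $\iota_n^{-1}$ are isometries, the operator $V_f^+$ is contractive and manifestly satisfies $V_f^+V_f=I$. The support conditions follow at once: if $(x,y)\in\operatorname{supp}(V_f)$, then $x\in U_n$ and $y=y_n$ for some $n$, giving $d(f(x),y)\leq\operatorname{diam}(f(U_n))+d(y_n,f(U_n\cap Z_X))<\varepsilon$; the bound for $V_f^+$ is obtained by swapping coordinates.

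The main delicate step is the first one: assembling a Borel partition of $X$ whose pieces have uniformly controlled $f$-oscillation, since continuity of $f$ provides only pointwise control. Separability of $X$ is essential here, and the disjointification must be executed carefully to keep the diameter estimate on the Borel pieces. The $\ell^p$-theoretic facts that follow (the identification $\ell^p(\N,\ell^p)\cong\ell^p$, contractivity of coordinate projections, and the fact that an $\ell^p$-direct sum of isometries with pairwise disjoint image supports is an isometry) are then routine.
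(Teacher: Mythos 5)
Your overall architecture is correct and close to the paper's, but there is a genuine gap in Step 1 at the point where you claim the target points $y_n\in Z_Y$ can be chosen pairwise distinct by ``a small perturbation within $Z_Y$.'' That perturbation argument breaks down when $Y$ has an isolated point. Suppose $y_0\in Y$ satisfies $B(y_0,r)=\{y_0\}$ with $r>\varepsilon/2$, and suppose the compact set $K:=f^{-1}(\{y_0\})$ is split by your disjointification into two or more nonempty pieces $U_n$ (this is easy to arrange: $K$ may be covered by two overlapping balls in the original cover, whose disjointification stays nontrivial). Each such piece has $f(U_n)\subseteq\{y_0\}$, so the condition $d(y_n,f(U_n\cap Z_X))<\varepsilon/2<r$ forces $y_n=y_0$ for all of them; no perturbation inside $Z_Y$ is available, since $Z_Y\cap B(y_0,r)=\{y_0\}$. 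With repeated $y_n$, your disjoint-coordinate-block isometry $V_f=\bigoplus_n\iota_n$ collides on the fiber at $y_0$ and is no longer an isometry. This is not a cosmetic issue: without distinct $y_n$ your construction as written does not define an operator.

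The paper sidesteps this entirely by partitioning $Y$ rather than $X$: it chooses a disjoint Borel cover $\{Y_i\}$ of $Y$ with $\operatorname{diam}(Y_i)\le\varepsilon$ and nonempty interior (so $Y_i\cap Z_Y\ne\emptyset$), sets $U_i=f^{-1}(Y_i)$, and maps $\ell^p(U_i\cap Z_X)\otimes\ell^p$ isometrically \emph{into} $\ell^p(Y_i\cap Z_Y)\otimes\ell^p$ rather than onto a single fiber. Disjointness of the $Y_i$ is automatic, so there is nothing to perturb, and the target space $\ell^p(Y_i\cap Z_Y)\otimes\ell^p$ is always big enough (any nonempty $Y_i\cap Z_Y$ gives at least a copy of $\ell^p$) to receive an isometric embedding. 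Your argument can be repaired in the same spirit: either (i) merge every group of pieces $U_n$ assigned the same $y_n$ into one piece---this still satisfies $d(f(x),y_n)<\varepsilon$ on the merged piece via the triangle inequality, though the merged piece no longer has $f$-oscillation $<\varepsilon/2$, so you should drop that intermediate requirement; or (ii) when several $U_n$ share the same $y_n$, embed their fibers into an internal $\ell^p$-direct-sum decomposition of the single fiber $(\ell^p)_{y_n}\cong\ell^p$ rather than insisting on distinct coordinate blocks. Either fix works, but as written the proposal's construction of $V_f$ is not well defined in general.

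Two smaller remarks. First, you implicitly use that the range of $\iota_n$ (and of the sum $V_f$) admits a contractive projection so that $V_f^+$ can be defined by $\iota_n^{-1}$ on the range and $0$ on a complement; for $p\ne 2$ arbitrary isometries of $\ell^p$ need not have complemented range, so you should choose the $\iota_n$ to be ``coordinate'' isometries (mapping basis vectors to basis vectors), which makes the range complemented with a norm-one projection. The paper's $V_i$, $V_i^+$ are tacitly of this form. Second, your observation that an $\ell^p$-direct sum of isometries with pairwise disjoint coordinate supports is again an isometry is correct and is exactly the mechanism used in the paper as well.
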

\begin{proof}
    Let $Z_X$, $Z_Y$ be the dense subsets of $X$ and $Y$ for defining $E^p_X$ and $E^p_Y$, respectively, as in Definition \ref{Def:Lp-module}

        There exists a Borel cover $\{Y_i\}_i$ of $Y$ such that:
         \begin{enumerate}
         \item  $Y_i\cap Y_j=\emptyset$ if $i\not=j$;
         \item $\mathrm{diameter}(Y_i)\leq \varepsilon$ for all $i$;
         \item each $Y_i$ has nonempty interior.
    \end{enumerate}
     Condition (3) implies that $Y_i\cap Z_Y$ is a countable set. Thus if $f^{-1}(Y_i)\cap Z_X\not=\emptyset$, then there exists an isometric operator $V_i:\ell^p(f^{-1}(Y_i)\cap Z_X)\otimes \ell^p\rightarrow \ell^p(Y_i\cap Z_Y)\otimes \ell^p$ and a contractive operator $V_i^+:\ell^p(Y_i\cap Z_Y)\otimes \ell^p\rightarrow \ell^p(f^{-1}(Y_i)\cap Z_X)\otimes \ell^p$ such that $V_i^+V_i=\chi_{f^{-1}(Y_i)\cap Z_X}\otimes I$. If $f^{-1}(Y_i)\cap Z_X=\emptyset$, then let $V_i=V_i^+=0$. Define
$$V_f=\bigoplus_{i} V_i:\bigoplus_{i} \ell^p(f^{-1}(Y_i)\cap Z_X)\otimes \ell^p\rightarrow \bigoplus_{i} \ell^p(Y_i\cap Z_Y)\otimes \ell^p$$
$$V_f^+=\bigoplus_{i} V_i^+:\bigoplus_{i} \ell^p(Y_i\cap Z_Y)\otimes \ell^p \rightarrow \bigoplus_{i} \ell^p(f^{-1}(Y_i)\cap Z_X)\otimes \ell^p.$$
    Then $V_f$ is an isometric operator, $V_f^+$ is a contractive operator and $V_f^+V_f=I$.
Condition (2) together with the construction of $V_f$ and $V_f^+$, implies that $\mathrm{supp}(V_f) \subseteq \{(x,y)\in X\times Y:d(f(x),y)\leq \varepsilon\}$ and $\mathrm{supp}(V_f^+) \subseteq\{(y,x)\in Y\times X:d(f(x),y)\leq \varepsilon\}$.
\end{proof}

\begin{lemma}\label{Lemma:CoveringIsometryPair}
Let $f$, $E_X^p$ and $E_Y^p$ be as in Lemma \ref{Lemma:CoveringIsometry}. Then the pair $(V_f,V_f^+)$ gives rise to a homomorphism $\mathrm{ad}((V_f,V_f^+)):B^p(E_X^p)\rightarrow B^p(E_Y^p)$ defined by:
                        $$\mathrm{ad}((V_f,V_f^+))(T)=V_fTV_f^+$$ for all $T\in B^p(E_X^p)$.\par
Moreover, the map $\mathrm{ad}((V_f,V_f^+))_*$ induced by $\mathrm{ad}((V_f,V_f^+))$ on $K$-theory depends only on $f$ and not on the choice of the pair $(V_f,V_f^+)$.
\end{lemma}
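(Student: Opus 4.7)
My proof plan has two parts, matching the two claims in the statement.

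\emph{Part 1: the map is a bounded homomorphism.} I would first verify that for every locally compact $T\in B^p(E_X^p)$ of finite propagation, the operator $V_f T V_f^+$ lies in $B^p(E_Y^p)$. For finite propagation, applying Remark~\ref{Remark:support}(2) together with the support bounds of Lemma~\ref{Lemma:CoveringIsometry} gives: if $(y,y')\in\mathrm{supp}(V_f T V_f^+)$ then, after taking closure, there exist $x,x'\in X$ with $d(f(x),y)\le\varepsilon$, $(x,x')\in\mathrm{supp}(T)$ and $d(f(x'),y')\le\varepsilon$, so the coarse control of $f$ yields $d(y,y')\le 2\varepsilon+R'$, where $R'$ bounds the displacement of $f$ on pairs at distance $\le\mathrm{prop}(T)$. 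For local compactness, given a compact $K\subseteq Y$, I would use Remark~\ref{Remark:support}(4) to write $\chi_K V_f=\chi_K V_f\chi_{K'}$ with $K'\subseteq f^{-1}(N_\varepsilon(K))$ relatively compact (via properness of $f$), so that $\chi_K V_f T V_f^+=(\chi_K V_f)(\chi_{K'}T)V_f^+$ is the product of bounded operators with the compact operator $\chi_{K'}T$; the argument for $V_f T V_f^+\chi_K$ is symmetric. Linearity is obvious, and multiplicativity is a one-line cancellation using $V_f^+V_f=I$:
\begin{equation*}
(V_f T V_f^+)(V_f S V_f^+)=V_f T (V_f^+V_f) S V_f^+=V_f (TS) V_f^+.
\end{equation*}
The estimate $\|V_f T V_f^+\|\le\|T\|$ (from $V_f$ isometric and $V_f^+$ contractive) lets the map extend by continuity to a homomorphism on the norm closure $B^p(E_X^p)\to B^p(E_Y^p)$.

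\emph{Part 2: $K$-theoretic independence.} Given two pairs $(V_0,V_0^+)$ and $(V_1,V_1^+)$ covering the same $f$ (passing to a common $\varepsilon$ if needed), I would build a continuous family of homomorphisms $\Phi_t\colon B^p(E_X^p)\to M_2(B^p(E_Y^p))$ for $t\in[0,\pi/2]$ by
\begin{equation*}
\Phi_t(T)=\begin{pmatrix}\cos t\cdot V_0\\ \sin t\cdot V_1\end{pmatrix} T \begin{pmatrix}\cos t\cdot V_0^+ & \sin t\cdot V_1^+\end{pmatrix}.
\end{equation*}
The scalar identity $\cos^2 t\cdot V_0^+V_0+\sin^2 t\cdot V_1^+V_1=I$ produces exactly the cancellation of Part 1, so each $\Phi_t$ is a homomorphism. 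At the endpoints, $\Phi_0(T)=\mathrm{diag}(V_0 T V_0^+,0)$ and $\Phi_{\pi/2}(T)=\mathrm{diag}(0,V_1 T V_1^+)$, which represent $\mathrm{ad}((V_0,V_0^+))_*$ and $\mathrm{ad}((V_1,V_1^+))_*$ respectively under the corner identification $K_*(M_2(B^p(E_Y^p)))\cong K_*(B^p(E_Y^p))$ (the two corners agreeing on $K$-theory because they are conjugate by the permutation matrix, an inner automorphism of $M_2$). Homotopy invariance of $K$-theory then yields equality of the induced maps, simultaneously for $K_0$ and $K_1$.

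\emph{Main obstacle.} The crux is checking that every entry of $\Phi_t(T)$---especially the off-diagonal cross terms $V_0 T V_1^+$ and $V_1 T V_0^+$---actually belongs to $B^p(E_Y^p)$. This forces one to redo the support and local compactness estimates of Part 1 with a mixed pair, using that both $V_i$ cover $f$ up to $\varepsilon$ so that a uniform propagation bound and a common compact set $K'$ work in the local compactness argument. Once these verifications are in place, $t\mapsto\Phi_t(T)$ is norm-continuous by inspection of the scalar coefficients, and the independence statement reduces to the standard homotopy invariance of the $K$-functor on Banach algebras.
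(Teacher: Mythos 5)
Your proof is correct, and the two claims are handled in the same spirit as the paper, but Part~2 takes a genuinely different route. The paper proves independence of the choice of pair by a similarity argument: it writes down an involution
\[
U=\begin{pmatrix} I-V_1V_1^+ & V_1V_2^+ \\ V_2V_1^+ & I-V_2V_2^+ \end{pmatrix},\qquad U^2=I,
\]
and verifies the exact algebraic identity
\[
\begin{pmatrix}\mathrm{ad}((V_1,V_1^+))(T)&0\\0&0\end{pmatrix}
= U\begin{pmatrix}0&0\\0&\mathrm{ad}((V_2,V_2^+))(T)\end{pmatrix}U,
\]
so the two homomorphisms into $M_2$ differ by conjugation by a (multiplier-type) involution, which is trivial on $K$-theory. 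You instead run a rotation homotopy $\Phi_t$ through homomorphisms $B^p(E_X^p)\to M_2(B^p(E_Y^p))$, using $\cos^2t\cdot V_0^+V_0+\sin^2t\cdot V_1^+V_1=I$ to preserve multiplicativity, and appeal to homotopy invariance plus the usual corner identification. The two mechanisms are essentially interchangeable; yours is arguably more self-contained in the Banach-algebra setting because it avoids discussing conjugation by elements that live outside the unitization of $M_2(B^p(E_Y^p))$, while the paper's avoids an explicit norm-continuity check. You correctly identified the one step that actually requires work: every entry of $\Phi_t(T)$, including the cross terms $V_0TV_1^+$ and $V_1TV_0^+$, must be checked to lie in $B^p(E_Y^p)$ by rerunning the propagation and local-compactness estimates for mixed pairs, which goes through because both $V_i$ cover $f$ to within a common $\varepsilon$. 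Part~1 coincides with the paper's argument.
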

\begin{proof}
    Obviously, $\mathrm{ad}(V_f,V_f^+)$ be a contractive homomorphism, thus we just need to show that if $T$ has finite propagation and is locally compact, then $\mathrm{ad}((V_f,V_f^+))(T)$ has these properties too.\par
    Assume first that $T$ has finite propagation. Let $\varepsilon$ be as in Lemma \ref{Lemma:CoveringIsometry}, then $d(f(x),y)\leq \varepsilon$ and $d(f(x'),y')\leq \varepsilon$ for any $(x,y)\in \mathrm{supp}(V_f)$ and $(y',x')\in \mathrm{supp}(V_f^+)$. Let $(y_1,y_2)\in \mathrm{supp}(V_fTV_f^+)$. By Remark \ref{Remark:support} part (3), we have that
  \begin{center}
    $\mathrm{supp}(V_fTV_f^+) \subseteq \mathrm{supp}(V_f) \circ \mathrm{supp}(T)\circ \mathrm{supp}(V_f^+)$.
  \end{center}
Hence there exist $x_1,x_2\in X$ such that $(x_1,y_1)\in \mathrm{supp}(V_f), (x_1,x_2)\in \mathrm{supp}(T)$ and $(y_2,x_2)\in \mathrm{supp}(V_f^+)$,
then
    $$d(y_1,y_2)\leq d(y_1,f(x_1))+d(f(x_1),f(x_2))+d(f(x_2),y_2)\leq 2\varepsilon +d(f(x_1),f(x_2)).$$
Since $f$ is coarse and $T$ has finite propagation, we have that $d(y_1,y_2)$ is smaller than some constant for all $(y_1,y_2)\in \mathrm{supp}(V_fTV_f^+)$. This completes the proof of finite propagation.\par
    Now assume that $T$ is locally compact. Let $K$ be a compact subset of $Y$, and let $F = \mathrm{supp}(V_f)$. By Remark \ref{Remark:support} (4), we have that$$\chi_K V_fTV_f^+=\chi_K V_f\chi_{F\circ K}TV_f^+$$
 Since $f$ is a proper map and $X$ is a proper space, we know that $F\circ K$ is a compact subset in $X$, then $\chi_{F\circ K}T$ is compact operator, thus $\chi_K V_f\chi_{F\circ K}TV_f^+$ is a compact operator. The case of $V_fTV_f^+\chi_K$ is similar. Thus $\mathrm{ad}((V_f,V_f^+))(T)$ is locally compact.\par
    Let $(V_1,V_1^+)$ and $(V_2,V_2^+)$ be two pairs of operators satisfying the conditions of Lemma \ref{Lemma:CoveringIsometry}, then we just need to prove
               $$\mathrm{ad}((V_1,V_1^+))_*=\mathrm{ad}((V_2,V_2^+))_*: K_*(B^p(E_X^p))\rightarrow K_*(B^p(E_Y^p))$$
Let
  \begin{center}
  \begin{equation*}
  U=\begin{pmatrix}
  I-V_1V_1^+ & V_1V_2^+ \\
  V_2V_1^+   & I-V_2V_2^+
  \end{pmatrix}
  \end{equation*}
  \end{center}
then $U^2=I$ and
 \begin{center}
 \begin{equation*}
  \begin{pmatrix}
 \mathrm{ad}((V_1,V_1^+))(T) & 0\\
 0                  & 0
  \end{pmatrix}
  =U\begin{pmatrix}
 0 & 0\\
 0 & \mathrm{ad}((V_2,V_2^+))(T)
  \end{pmatrix}U
  \end{equation*}
 \end{center}
Thus $\mathrm{ad}((V_1,V_1^+))_*=\mathrm{ad}((V_2,V_2^+))_*$
\end{proof}

\begin{corollary}
    For different $L^p$-$X$-modules $E_X^p$ and $E_X'^{p}$, $B^p(E_X^p)$ is non-canonically isomorphic to $B^p(E_X'^{p})$, and $K_*(B^p(E_X^p))$ is canonically isomorphic to $K_*(B^p(E_X'^{p}))$.
\end{corollary}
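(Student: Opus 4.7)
The plan is to specialize Lemmas \ref{Lemma:CoveringIsometry} and \ref{Lemma:CoveringIsometryPair} to the identity map $\id_X: X \to X$, viewed as a continuous coarse map from $(X, E_X^p)$ to $(X, E_X'^p)$. Applied in both directions, Lemma \ref{Lemma:CoveringIsometryPair} produces contractive homomorphisms $\Phi := \ad((V,V^+)): B^p(E_X^p) \to B^p(E_X'^p)$ and $\Psi := \ad((W,W^+)): B^p(E_X'^p) \to B^p(E_X^p)$. The composition $\Psi \circ \Phi$ is equal to $\ad((WV, V^+W^+))$, and the pair $(WV, V^+W^+)$ is again a covering pair for $\id_X : (X,E_X^p) \to (X,E_X^p)$ (in the sense of Lemma \ref{Lemma:CoveringIsometry}, with $\varepsilon$ doubled). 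By the uniqueness clause of Lemma \ref{Lemma:CoveringIsometryPair}, $(\Psi \circ \Phi)_*$ agrees with the $K$-theory map induced by $(\Id, \Id)$, which is the identity; symmetrically for $\Phi \circ \Psi$. Hence $\Phi_*$ is an isomorphism $K_*(B^p(E_X^p)) \cong K_*(B^p(E_X'^p))$ that is canonical, i.e.\ independent of all choices involved in its construction.

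For the (non-canonical) isomorphism of the algebras themselves, I would refine the construction inside Lemma \ref{Lemma:CoveringIsometry}. Fix $\varepsilon > 0$ and choose a Borel partition $\{X_i\}$ of $X$ with $\mathrm{diameter}(X_i) \leq \varepsilon$ and each $X_i$ having nonempty interior. Since $Z_X$ and $Z_X'$ are both dense in $X$, each intersection $Z_X \cap X_i$ and $Z_X' \cap X_i$ is countably infinite, so both $\ell^p(Z_X \cap X_i) \otimes \ell^p$ and $\ell^p(Z_X' \cap X_i) \otimes \ell^p$ are isometrically isomorphic to $\ell^p$. Pick any bijective linear isometry $U_i$ between them (for instance, induced by a bijection of the countable index sets, which is already a surjective $\ell^p$-isometry for any $p$). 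Setting $U = \bigoplus_i U_i$ yields a surjective linear isometry $U : E_X^p \to E_X'^p$ whose support is contained in $\{(x,y) : d(x,y) \leq \varepsilon\}$. Conjugation $T \mapsto UTU^{-1}$ is then an isometric Banach-algebra isomorphism $B^p(E_X^p) \to B^p(E_X'^p)$: that it preserves finite propagation and local compactness (in both directions) follows verbatim from the finite-propagation and local-compactness arguments in the proof of Lemma \ref{Lemma:CoveringIsometryPair}, applied with $(V, V^+) = (U, U^{-1})$ in each direction.

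The only point that requires a bit of care is the construction of the bijective isometry $U_i$ on each piece; for $p \neq 2$ general surjective $\ell^p$-isometries are severely constrained by Lamperti's theorem, but because we have the freedom to permute the countable index sets $Z_X \cap X_i$ and $Z_X' \cap X_i$ we obtain a bijective isometry trivially. Once the algebra isomorphism $\ad(U)$ has been built, its induced $K$-theory map automatically coincides with the canonical map $\Phi_*$ above, since $(U, U^{-1})$ is itself a valid covering pair for $\id_X$ to which Lemma \ref{Lemma:CoveringIsometryPair} applies.
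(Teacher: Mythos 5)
Your proposal is correct and is the argument the paper clearly intends (the corollary is stated without proof immediately after Lemma~\ref{Lemma:CoveringIsometryPair}): the $K$-theoretic isomorphism comes from applying Lemma~\ref{Lemma:CoveringIsometryPair} to $\mathrm{id}_X$ in both directions and invoking the uniqueness clause to collapse the composites to the identity, and the non-canonical algebra isomorphism comes from refining the construction of Lemma~\ref{Lemma:CoveringIsometry} for $\mathrm{id}_X$ to produce a surjective isometry $U$ (possible because each $\ell^p(Z_X\cap X_i)\otimes\ell^p$ and $\ell^p(Z_X'\cap X_i)\otimes\ell^p$ is isometrically $\ell^p(\mathbb N)$, so a bijective isometry exists trivially) and conjugating by it. One minor point worth making explicit: the uniqueness clause of Lemma~\ref{Lemma:CoveringIsometryPair} is used for pairs built with possibly \emph{different} $\varepsilon$'s, but its proof is insensitive to that, so your appeal to it is valid.
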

For convenience, we replace $B^p(E_X^p)$ by $B^p(X)$ representing $L^p$ Roe algebra of $X$.

\subsection{$L^p$ Localization algebra and $L^p$ $K$-homology}

\begin{definition}
   Let $X$ be a proper metric space. The \textit{$L^p$ localization algebra} of $X$, denoted $B_L^p(X)$, is defined to be the norm closure of the algebra of all bounded and uniformly norm-continuous functions $f$ from $[0,\infty)$ to $B^p(X)$ such that
  \begin{center}
   prop($f(t)$) is uniformly bounded and prop($f(t)$)$\rightarrow 0$ as $t\rightarrow \infty$.
  \end{center}
  The \textit{propagation} of $f$ is defined to be $\max\{\mathrm{prop}(f(t)):t\in [0,\infty)\}$.
\end{definition}

    Let $f$ be a uniformly continuous coarse map from a proper metric space $X$ to another proper metric space $Y$. Let $\{\varepsilon_k\}_k$ be a sequence of positive numbers such that $\varepsilon_k \rightarrow 0$ as $k \rightarrow \infty$. By Lemma \ref{Lemma:CoveringIsometry}, for each $\varepsilon_k$, there exists an isometric operator $V_k$ from an $L^p$-$X$-module $E_X^p$ to an $L^p$-$Y$-module $E_Y^p$ and a contractive operator $V_k^+$ from an $L^p$-$Y$-module $E_Y^p$ to an $L^p$-$X$-module $E_X^p$ such that $V_k^+V_k=I$ and
    \begin{center}
    $\mathrm{supp}(V_k) \subseteq\{(x,y)\in X\times Y : d(f(x),y)\leq \varepsilon_k\}$\\
    $\mathrm{supp}(V_k^+) \subseteq\{(y,x)\in Y\times X : d(f(x),y)\leq \varepsilon_k\}$.
    \end{center}
For $t\in [0,\infty)$, define
    \begin{center}
    $V_f(t)=R(t-k)(V_k\oplus V_{k+1})R^*(t-k)$\\
    $V_f^+(t)=R(t-k)(V_k^+\oplus V_{k+1}^+)R^*(t-k)$
    \end{center}
for all $k\leq t \leq k+1$, where
    $$R(t)=\begin{pmatrix}
    \cos(\pi t/2)  & \sin(\pi t/2)\\
    -\sin(\pi t/2) & \cos(\pi t/2)
    \end{pmatrix}.$$
 $V_f(t)$ is an operator from $E_X^p \oplus E_X^p$ to $E_Y^p \oplus E_Y^p$, and $V_f^+(t)$ is an operator from $E_Y^p \oplus E_Y^p$ to $E_X^p \oplus E_X^p$ such that $||V_f(t)||\leq 4$, $||V^+_f(t)||\leq 4$ and $V_f^+(t)V_f(t)=I$ for all $t\in[0,\infty)$.

\begin{lemma}\label{Lemma:IsometryPairHom}
Let $f$ and $\{\varepsilon_k\}_k$ be as above, then the pair $(V_f(t),V_f^+(t))$ induces a homomorphism $\mathrm{Ad}((V_f,V_f^+))$ from $B_L^p(X)$ to $B_L^p(Y)\otimes M_2(\mathbb{C})$ defined by:
$$\mathrm{Ad}((V_f,V_f^+))(u)(t)=V_f(t)(u(t)\oplus 0)V_f^+(t)$$
for any $u\in B_L^p(X)$ and $t\in [0,\infty)$, such that
  $$\mathrm{prop}(\mathrm{Ad}((V_f,V_f^+))(u)(t))\leq \sup_{(x,y)\in \mathrm{supp}(u(t))}d(f(x),f(y))+2\varepsilon_k + 2\varepsilon_{k+1}$$
for all $t\in [k,k+1]$.\par
Moreover, the induced map $\mathrm{Ad}((V_f,V_f^+))_*$ on $K$-theory depends only on f and not on the choice of the pairs $\{(V_k,V_k^+)\}$ in the construction of $V_f(t)$ and $V_f^+(t)$.
\end{lemma}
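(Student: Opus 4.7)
The plan is to verify three things in sequence: (i) $\mathrm{Ad}((V_f, V_f^+))$ sends $B_L^p(X)$ into $B_L^p(Y) \otimes M_2(\mathbb{C})$ and is a homomorphism; (ii) the stated propagation estimate; and (iii) independence of the induced $K$-theory map from the choice of pairs $\{(V_k, V_k^+)\}$.

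For (i), the key properties to check are boundedness, uniform norm-continuity, local compactness of each value, and the localization-algebra conditions on propagation. Boundedness is immediate from $\|V_f(t)\|, \|V_f^+(t)\| \leq 4$. Uniform norm-continuity in $t$ follows from that of $u$ combined with continuity of $t \mapsto R(t-k)$ and the match-up of blocks at integer points built into the rotation construction. Local compactness of each $V_f(t)(u(t) \oplus 0) V_f^+(t)$ is verified as in Lemma \ref{Lemma:CoveringIsometryPair} by invoking Remark \ref{Remark:support}(4) together with the properness of $f$ to reduce to local compactness of $u(t)$. The homomorphism property uses $V_f^+(t) V_f(t) = I$.

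For (ii), I would first observe that for $t \in [k, k+1]$ each entry of the $2 \times 2$ operator matrix $V_f(t) = R(t-k)(V_k \oplus V_{k+1}) R^*(t-k)$ is a scalar combination of $V_k$ and $V_{k+1}$, so
\[
\mathrm{supp}(V_f(t)) \subseteq \{(x, y) : d(f(x), y) \leq \varepsilon_k + \varepsilon_{k+1}\},
\]
and analogously for $V_f^+(t)$. Applying Remark \ref{Remark:support}(2)-(3) to the composition $V_f(t)(u(t) \oplus 0) V_f^+(t)$, together with the triangle inequality
\[
d(y_1, y_2) \leq d(y_1, f(x_1)) + d(f(x_1), f(x_2)) + d(f(x_2), y_2)
\]
for a witness $(x_1, x_2) \in \mathrm{supp}(u(t))$, yields the stated bound. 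The localization axioms on propagation then follow from the corresponding properties of $u$ and from $\varepsilon_k \to 0$.

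For (iii), I would follow the template of Lemma \ref{Lemma:CoveringIsometryPair}: given two choices $(V_1(t), V_1^+(t))$ and $(V_2(t), V_2^+(t))$, form
\[
U(t) = \begin{pmatrix} I - V_1(t) V_1^+(t) & V_1(t) V_2^+(t) \\ V_2(t) V_1^+(t) & I - V_2(t) V_2^+(t) \end{pmatrix},
\]
which is an involution intertwining (after padding by zeros) one $\mathrm{Ad}$-map with the other exactly as in the cited lemma. The propagation of $U(t)$ is uniformly bounded and tends to $0$, so $U(t)$ implements an inner automorphism on $B_L^p(Y) \otimes M_4(\mathbb{C})$, and conjugation by an involution induces the identity on $K$-theory. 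The main obstacle I anticipate lies precisely here: one must verify that $U(t)$ genuinely delivers an inner automorphism at the level of the localization algebra, with the correct uniform continuity and propagation decay in $t$. This is mostly bookkeeping, but it is where the argument is most sensitive to the behavior of the rotation construction at the integer transition points.
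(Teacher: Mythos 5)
Your three-part organization matches the paper's proof, which establishes well-definedness, the propagation estimate, and $K$-theory independence in the same order. For (ii), the paper bounds $\mathrm{prop}(V_k u(t) V_k^+)$ and $\mathrm{prop}(V_k u(t) V_{k+1}^+)$ separately and combines them via Remark \ref{Remark:propagation}, whereas you bound $\mathrm{supp}(V_f(t))$ directly and then compose supports; both routes give the stated inequality (your written bound on $\mathrm{supp}(V_f(t))$ with $\varepsilon_k+\varepsilon_{k+1}$ is slightly generous, since $\max(\varepsilon_k,\varepsilon_{k+1})$ suffices, but it yields the same final estimate). For (iii), the paper only says ``by the same ways as the proof of Lemma~\ref{Lemma:CoveringIsometryPair},'' so your expansion via $U(t)$ is the intended argument.

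One point worth correcting: your statement that $U(t)$ ``implements an inner automorphism on $B_L^p(Y)\otimes M_4(\mathbb{C})$'' is not quite accurate. As the paper itself notes, $V_f(t)$ and $V_f^+(t)$ are not norm-continuous in $t$; consequently $V_1(t)V_1^+(t)$, and hence $U(t)$, generally jumps at the integer transition points and therefore does not belong to (nor act as a continuous multiplier on) $B_L^p(Y)\otimes M_4(\mathbb{C})$. What does hold is that for each fixed $t$ the identity $\mathrm{Ad}((V_1,V_1^+))(u)(t)\oplus 0 = U(t)\bigl(0\oplus \mathrm{Ad}((V_2,V_2^+))(u)(t)\bigr)U(t)$ is valid, and both sides are norm-continuous in $t$ even though $U(t)$ is not. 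The homotopy needed at the level of $K$-theory should then be taken in a separate parameter $s$ (a rotation in the matrix slot together with $U(t)$, as in the classical Cuntz argument for multiplier conjugation), with $t$-continuity of the resulting path verified only for the conjugated elements, not for $U(t)$ itself. You have in fact put your finger on the right subtlety; the fix is simply to avoid phrasing it as an inner automorphism of the localization algebra.
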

\begin{proof}
For any $u\in B_L^p(X)$, $\mathrm{Ad}((V_f,V_f^+))(u)$ is bounded and uniformly norm-continuous in $t$ although $V_f$ and $V_f^+$ are not norm-continuous. By the same ways as the proof of Lemma \ref{Lemma:CoveringIsometryPair}, we can obtain that $\mathrm{Ad}((V_f,V_f^+))(u)(t)$ is locally compact when $u(t)$ is locally compact for each $t$ and $\mathrm{Ad}((V_f,V_f^+))_*$ does not depend on the choice of the pair $(V_f,V_f^+)$.\par
Thus we just need to consider prop($\mathrm{Ad}((V_f,V_f^+))(u)(t)$) for which prop($u(t)$) is uniformly finite and prop($u(t)$)$\rightarrow 0$ as $t\rightarrow \infty$. By Lemma \ref{Remark:support} (4), we know that
\begin{align*}
  \mathrm{prop}(V_k u(t) V_k^+) &\le\sup\{d(f(x),f(y)):(x,y)\in \mathrm{supp}(u(t))\}+2\varepsilon_k\\
  \mathrm{prop}(V_k u(t) V_{k+1})& \le \sup\{d(f(x),f(y)):(x,y)\in \mathrm{supp}(u(t))\}+\varepsilon_k+\varepsilon_{k+1}.
\end{align*}
Thus by Remark \ref{Remark:propagation}, we have
  \begin{center}
  prop$(\mathrm{Ad}((V_f,V_f^+))(u)(t))$ $\leq$ sup$\{d(f(x),f(y)):(x,y)\in \mathrm{supp}(u(t))\}$ $+$ $2\varepsilon_k + 2\varepsilon_{k+1}$
  \end{center}
Therefore, $\mathrm{prop}(\mathrm{Ad}((V_f,V_f^+))(u)(t))$ is uniformly finite since $f$ is a coarse map, and $\mathrm{prop}(\mathrm{Ad}((V_f,V_f^+))(u)(t)$)$\rightarrow 0$ as $t\rightarrow \infty$ since $f$ is a uniformly continuous map and $\varepsilon_k\rightarrow 0$.
\end{proof}

\begin{definition}
    The $i$-th \textit{$L^p$ $K$-homology} of $X$, is defined to be $K_i(B^p_L(X))$.
\end{definition}

\subsection{Obstruction group}

    Let $X$ be a proper metric space. Now consider the evaluation-at-zero homomorphism:
    $$e_0:B_L^p(X)\rightarrow B^p(X)$$
which induces a homomorphism on $K$-theory:
    $$e_0:K_*(B_L^p(X))\rightarrow K_*(B^p(X))$$

    Let $C$ be a locally finite and uniformly bounded cover for  $X$. The \textit{nerve space} $N_C$ associated to $C$ is defined to be the simplicial complex whose set of vertices equals $C$ and where a finite subset $\{U_0,\ldots,U_n\}\subseteq C$ spans an $n$-simplex in $N_C$ if and only if $\bigcap_{i=0}^n U_i \not=\emptyset$. Endow $N_C$ with the \textit{$\ell^1$-metric}, i.e., the path metric whose restriction to each simplex $\{U_0,\ldots,U_n\}$ is given by $$d(\sum^n_{i=0}t_iU_i,\sum^n_{i=0}s_iU_i)=\sum^n_{i=0}|t_i-s_i|.$$ The distance of two points which in different connected components is defined to be $\infty$ by convention.

    A sequence of locally finite and uniformly bounded covers $\{C_k\}_{k=0}^\infty$ of metric space $X$ is called an \textit{anti-\v Cech system} of $X$ \cite{RoeBookCoarseCohomology}, if there exists a sequence of positive numbers $R_k\rightarrow \infty$ such that for each $k$,\par
   (1) every set $U\in C_k$ has diameter less than or equal to $R_k$;\par
   (2) any set of diameter $R_k$ in $X$ is contained in some member of $C_{k+1}$.

    An anti-\v Cech system always exists \cite{RoeBookCoarseCohomology}.     

    By the property of the anti-\v Cech system, for every pair $k_2>k_1$, there exists a simplicial map $i_{k_1 k_2}$ from $N_{C_{k_1}}$ to $N_{C_{k_2}}$ such that $i_{k_1k_2}$ maps a simplex $\{U_0,\ldots,U_n\}$ in $N_{C_{k_1}}$ to a simplex $\{U_0',\ldots,U_n'\}$ in $N_{C_{k_2}}$ satisfying $U_i\subseteq U_i'$ for all $0\leq i\leq n$. Thus, $i_{k_1 k_2}$ gives rise to the following inductive systems of groups:
    \begin{center}
    $\mathrm{ad}((V_{i_{k_1k_2}},V_{i_{k_1k_2}}^+))_*:K_*(B^p(N_{C_{k_1}}))\rightarrow K_*(B^p(N_{C_{k_2}}))$;\\
    $\mathrm{Ad}((V_{i_{k_1k_2}},V_{i_{k_1k_2}}^+))_*:K_*(B_L^p(N_{C_{k_1}}))\rightarrow K_*(B_L^p(N_{C_{k_2}}))$.
    \end{center}

    The following conjecture is called the $L^p$ coarse Baum-Connes conjecture.
\begin{conjecture}
Let $X$ be a proper metric space, $\{C_k\}_{k=0}^\infty$ be an anti-\v Cech system of $X$, then the evaluation-at-zero  homomorphism
$$e_0:\lim_{k\rightarrow \infty} K_*(B_L^p(N_{C_k})) \rightarrow \lim_{k\rightarrow \infty} K_*(B^p(N_{C_k})) \cong K_*(B^p(X))$$
is an isomorphism.
\end{conjecture}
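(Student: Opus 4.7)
The plan is to establish the stated isomorphism by showing that the evaluation-at-zero map $e_0 : K_*(B_L^p(N_{C_k})) \to K_*(B^p(N_{C_k}))$ is an isomorphism at each finite stage $k$, and then passing to the inductive limit. The connecting maps on both sides come from the functoriality constructions in Lemmas \ref{Lemma:CoveringIsometryPair} and \ref{Lemma:IsometryPairHom}; since the definition of $\mathrm{Ad}((V_f,V_f^+))$ at $t=0$ is essentially $V_f \oplus V_{f'}$ paired with its left inverse, one checks directly that $e_0$ intertwines $\mathrm{Ad}((V_{i_{k_1k_2}},V_{i_{k_1k_2}}^+))_*$ with $\mathrm{ad}((V_{i_{k_1k_2}},V_{i_{k_1k_2}}^+))_*$. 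Hence $e_0$ induces a well-defined map on inductive limits, and exactness of $\varinjlim$ on abelian groups reduces the conjecture to the stage-wise statement.

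For the stage-wise statement on a single nerve $N_{C_k}$ (a uniformly locally finite simplicial complex with $\ell^1$ metric), I would argue by induction on its combinatorial dimension. The base case is a zero-dimensional complex, i.e., a uniformly discrete union of pieces of bounded diameter; there $B^p(N_{C_k})$ is a direct sum of copies of the compact operators on an $L^p$ space, and every element of $B_L^p(N_{C_k})$ deformation retracts onto its value at $t=0$ via a rescaling homotopy $u \mapsto u(\lambda t)$, so $e_0$ is an isomorphism directly. For the inductive step, write $N_{C_k} = A_k \cup B_k$ where $A_k$ and $B_k$ are controlled neighborhoods of two disjoint families of top-dimensional open stars chosen so that $A_k \cap B_k$ has strictly smaller simplicial dimension. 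Applying a Mayer--Vietoris six-term exact sequence for the pair of ideals $(B^p(A_k), B^p(B_k))$ inside $B^p(N_{C_k})$, and the parallel sequence for $B_L^p$, together with the five-lemma reduces the induction to the cases of $A_k$, $B_k$, and $A_k\cap B_k$.

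The hard part will be twofold. First, one must establish a usable Mayer--Vietoris exact sequence for $L^p$ Roe and $L^p$ localization algebras without recourse to Hilbert-space arguments or short exact sequences of $C^*$-algebras; the natural route, flagged by the introduction, is the quantitative / controlled $K$-theory developed in Section 3, where a \emph{quantitative} Mayer--Vietoris sequence replaces the exact one and the five-lemma is replaced by a persistence-of-isomorphism statement. Second, and more seriously, the induction terminates only after $\dim N_{C_k}+1$ steps, and the propagation budget required at stage $k$ grows linearly with $\dim N_{C_k}$; for a general proper metric space these dimensions are unbounded as $k \to \infty$, so the estimates do not assemble uniformly when passing to $\varinjlim_k$. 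Bounding $\dim N_{C_k}$ uniformly in $k$ is precisely the content of finite asymptotic dimension, which is why the paper confines the unconditional proof to that case. For an arbitrary proper metric space, the conjecture remains open and likely requires either a finite decomposition complexity type replacement permitting uniformly controlled nested Mayer--Vietoris decompositions, or a Dirac / dual-Dirac construction arising from a proper affine isometric action on some $L^p$ space, in the spirit of \cite{KasparovYuPBC}.
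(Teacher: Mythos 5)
The statement you were asked about is stated in the paper as a \emph{conjecture}: it is the $L^p$ coarse Baum--Connes conjecture, which the paper does not prove for general proper metric spaces and only establishes for spaces of finite asymptotic dimension (Theorem \ref{Thm:LpBCCFAD}). Your closing assessment that the general case remains open is correct, and your identification of finite asymptotic dimension as the hypothesis under which the argument closes matches the paper. However, the route you sketch has a genuine gap even as a proof of the finite asymptotic dimension case. You reduce the conjecture to the claim that $e_0\colon K_*(B_L^p(N_{C_k}))\to K_*(B^p(N_{C_k}))$ is an isomorphism \emph{at each fixed stage} $k$. This stage-wise statement is false in general for finite-dimensional simplicial complexes: every bounded geometry space admits an anti-\v Cech system whose nerves are finite dimensional at each fixed stage, so a stage-wise isomorphism would force the coarse Baum--Connes conjecture for all bounded geometry spaces, contradicting the expander counterexamples (which persist in the $L^p$ setting by \cite{ChungNowakPCBC}). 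The flaw in the proposed stage-wise induction is that the controlled Mayer--Vietoris machinery of Section \ref{section:cutpaste} and the vanishing Lemma \ref{Lemma:VanishingControlledObstruction} only apply to classes represented by quasi-invertibles of \emph{sufficiently small} propagation $r_1$ (small relative to the excision constant $r_0$ of the simplicial decomposition), whereas a general class in $K_*(B^p_{L,0}(N_{C_k}))$ has arbitrary finite propagation and there is no rescaling available inside a fixed nerve to shrink it.

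The mechanism the paper actually uses, and which is missing from your proposal, is the family of contractions $G_n\colon N_{C'_{n_0}}\to N_{C'_n}$ of Construction \ref{construct:anticech} and Lemma \ref{Lemma:PropagationArbitrarySmall}: pushing a class forward to a sufficiently late stage of the anti-\v Cech system makes its propagation arbitrarily small (part (3)), after which Lemma \ref{Lemma:VanishingControlledObstruction} kills it, and the strong Lipschitz homotopy between $G_n$ and $i_{n_0n}$ (part (2)) identifies this pushforward with the structure map of the inductive system. The conclusion is therefore only that the \emph{limit} of the obstruction groups $\lim_k K_*(B^p_{L,0}(N_{C_k}))$ vanishes (via the reduction in Lemma \ref{Thm:VanishingObstruction}), not that any individual $e_0$ is an isomorphism. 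Finite asymptotic dimension enters not merely to keep your ``propagation budget'' uniform across stages, but to guarantee that all the nerves $N_{C'_n}$ have dimension bounded by $m$ simultaneously, so that a class pushed to an arbitrarily late stage still lands in a complex to which the $(m+1)$-step induction of Lemma \ref{Lemma:VanishingControlledObstruction} applies. Your proposal would need to be reorganized around this limit argument; as written, its central reduction is to a statement that is false.
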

    For each $p\in [1,\infty)$, the group $\lim_{k\rightarrow \infty} K_*(B_L^p(N_{C_k}))$ is the \textit{$L^p$ coarse $K$-homology of $X$} (refer Definition 2.1 in \cite{HigsonRoeCBC}). Moreover, it is not difficult to see that the $L^p$ coarse Baum-Connes conjecture for $X$ does not depend on the choice of the anti-\v Cech system.

    Let $B_{L,0}^p(X)=\{f\in B_L^p(X):f(0)=0\}$. There exists an exact sequence:
    $$0 \rightarrow B_{L,0}^p(X) \rightarrow B_L^p(X) \rightarrow B^p(X) \rightarrow 0$$
    Thus we have the following reduction:
\begin{lemma}\label{Thm:VanishingObstruction}
Let $X$ be a proper metric space, $\{C_k\}_{k=0}^\infty$ be an anti-\v Cech system of $X$, then the $L^p$ coarse Baum-Connes conjecture is true if and only if
         $$\lim_{k\rightarrow \infty} K_*(B_{L,0}^p(N_{C_k}))=0$$
\end{lemma}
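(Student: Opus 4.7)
The plan is to apply the six-term exact sequence in $K$-theory to the given short exact sequence
\[
0 \to B_{L,0}^p(X) \to B_L^p(X) \to B^p(X) \to 0,
\]
evaluated on each nerve complex $N_{C_k}$, and then pass to the direct limit over the anti-\v Cech system.

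First, I would verify that this short exact sequence is natural with respect to the connecting maps $i_{k_1 k_2}$ from the anti-\v Cech system, so as to produce a directed system of short exact sequences. The key observation is that the homomorphism $\mathrm{Ad}((V_{i_{k_1 k_2}}, V_{i_{k_1 k_2}}^+))$ constructed in Lemma \ref{Lemma:IsometryPairHom} sends $B_{L,0}^p(N_{C_{k_1}})$ into $B_{L,0}^p(N_{C_{k_2}}) \otimes M_2(\mathbb{C})$, because $u(0)=0$ immediately implies $V_f(0)(u(0) \oplus 0) V_f^+(0) = 0$. Moreover, evaluation at $0$ intertwines $\mathrm{Ad}((V_{i_{k_1 k_2}}, V_{i_{k_1 k_2}}^+))$ with $\mathrm{ad}((V_{i_{k_1 k_2}}, V_{i_{k_1 k_2}}^+))$ (up to the harmless stabilization by $M_2(\mathbb{C})$), so the entire short exact sequence is a morphism of directed systems.

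Next, I would invoke the standard six-term exact sequence in $K$-theory for Banach algebras applied to each of these short exact sequences. Under this sequence, the induced map $K_*(B_L^p(N_{C_k})) \to K_*(B^p(N_{C_k}))$ is precisely $(e_0)_*$. Since direct limits of abelian groups preserve exactness, passing to the limit over $k$ produces the analogous six-term exact sequence with terms $\lim_k K_*(B_{L,0}^p(N_{C_k}))$, $\lim_k K_*(B_L^p(N_{C_k}))$, and $\lim_k K_*(B^p(N_{C_k}))$.

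Finally, the $L^p$ coarse Baum-Connes conjecture asserts exactly that the middle map $(e_0)_*$ in the limiting sequence is an isomorphism in both degrees $\ast = 0,1$. By exactness of the six-term sequence, this is equivalent to the vanishing of the two adjacent groups $\lim_k K_0(B_{L,0}^p(N_{C_k}))$ and $\lim_k K_1(B_{L,0}^p(N_{C_k}))$, which is the desired statement $\lim_k K_*(B_{L,0}^p(N_{C_k})) = 0$. The main subtlety, rather than a substantive obstacle, is ensuring the six-term exact sequence is valid for these (non-unital) Banach algebras; this is standard, via unitization and the half-exactness of Banach algebra $K$-theory.
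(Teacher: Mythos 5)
Your argument is exactly the standard reduction the paper has in mind; the paper simply states the short exact sequence $0 \to B_{L,0}^p \to B_L^p \to B^p \to 0$ and asserts the lemma as an immediate consequence, leaving the six-term sequence, the naturality over the anti-\v Cech system, and the exactness of direct limits implicit. Your write-up correctly supplies all of these details (including the $M_2(\mathbb{C})$-stabilization point, which is harmless at the level of $K$-theory), so it matches the intended proof.
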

    For obvious reason $\lim_{k\rightarrow \infty} K_*(B_{L,0}^p(N_{C_k}))$ is called the obstruction group to the $L^p$ coarse Baum-Connes conjecture.

\section{Controlled obstructions: $QP_{\delta,N,r,k}(X),QU_{\delta,N,r,k}(X)$}
    The controlled obstruction $QP$ and $QU$ for the coarse Baum-Connes conjecture was introduced by Guoliang Yu \cite{YuFAD}.
    In this section, we will introduce and study the $L^p$ version of $QP$ and $QU$, which can be considered as a controlled version of $K_0(B^p_{L,0}(X)\otimes C_0((0,1)^k))$ and $K_1(B^p_{L,0}(X)\otimes C_0((0,1)^k))$. We will follow the notation in \cite{YuFAD}. One may refer to \cite{OyonoYuQuantitative}\cite{ChungQuantiativeBanach} for more detail about the controlled $K$-theory for $C^\ast$-algebras and $L^p$-algebras.

\subsection{Fundamental concept and property}

\begin{definition}(\cite{ChungQuantiativeBanach})
    Let A be a unital Banach algebra, for $0<\delta<1/100,N\geq 1$, we define
(1) an element $e$ in $A$ is called \textit{$(\delta,N)$-idempotent}, if $||e^2-e||<\delta$ and max$\{||e||,||I-e||\}\leq N$;
(2) an element $u$  in $A$ is called $(\delta,N)$-$invertible$, if $||u||\leq N$, and there exists $v\in A$ with $||v||\leq N$ such that max$\{||uv-I||,||vu-I||\}<\delta$, where $I$ is the unit of $A$. Such $v$ is called a $(\delta,N)$-inverse of $u$.
\end{definition}

    Let $X$ be a proper metric space, let $B^p_{L,0}(X)^+$ be the Banach algebra obtained from $B^p_{L,0}(X)$ by adjoining an identity $I$.

\begin{definition}
    Let $0<\delta<1/100, N\geq 1, r>0$, $k$ and $n$ be nonnegative integers. Define $QP_{\delta,N,r,k}(B^p_{L,0}(X)^+ \otimes M_n(\mathbb{C}))$ to be the set of all continuous functions $f$ from $[0,1]^k$ to $B^p_{L,0}(X)^+ \otimes M_n(\mathbb{C})$ such that:\par
(1) $f(t)$ is an $(\delta,N)$-idempotent and prop$(f(t))$ $\leq r$ for all $t\in [0,1]^k$;\par
(2) $||f(t)-e_m||<\delta$ for all $t\in \mathrm{bd}([0,1]^k)$, the boundary of $[0,1]^k$ in $\mathbb{R}^k$, where $e_m=I\oplus \ldots \oplus I\oplus 0\oplus \ldots \oplus 0$ with $m$ identities;\par
(3) $\pi(f(t))=e_m$, where $\pi$ is the canonical homomorphism from $B^p_{L,0}(X)^+ \otimes M_n(\mathbb{C})$ to $M_n(\mathbb{C})$.
\end{definition}
\begin{definition}
    Let $0<\delta<1/100, N\geq 1, r>0$, $QP_{\delta,N,r,k}(X)$ is defined to be the direct limit of $QP_{\delta,N,r,k}(B^P_{L,0}(X)^+ \otimes M_n(\mathbb{C}))$ under the embedding: $p\rightarrow p\oplus 0$.
\end{definition}

\begin{definition}
    Let $0<\delta<1/100, N\geq 1, r>0$, $k$ and $n$ be nonnegative integers. Define $QU_{\delta,N,r,k}(B^p_{L,0}(X)^+ \otimes M_n(\mathbb{C}))$ to be the set of all continuous functions $u$ from $[0,1]^k$ to $B^p_{L,0}(X)^+ \otimes M_n(\mathbb{C})$ such that there exists a continuous function $v:[0,1]^k \rightarrow B^p_{L,0}(X)^+ \otimes M_n(\mathbb{C})$ satisfying that:\par
(1) $u(t)$ is a $(\delta,N)$-invertible with a $(\delta,N)$-inverse $v(t)$ such that $\max\{\mathrm{prop}(u(t))$, $\mathrm{prop}(v(t))$\} $\leq r$ for all $t\in [0,1]^k$;\par
(2) $||u(t)-I||<\delta$ and $||v(t)-I||<\delta$ for all $t\in \mathrm{bd}([0,1]^k)$;\par
(3) $\pi(u(t))=\pi(v(t))=I$, where $\pi$ is the canonical homomorphism from $B^p_{L,0}(X)^+ \otimes M_n(\mathbb{C})$ to $M_n(\mathbb{C})$.\par
Such $v$ is called a \textit{$(\delta,N,r)$-inverse} of $u$.
\end{definition}
\begin{definition}
    Let $0<\delta<1/100, N\geq 1, r>0$, $QU_{\delta,N,r,k}(X)$ is defined to be the direct limit of $QU_{\delta,N,r,k}(B^P_{L,0}(X)^+ \otimes M_n(\mathbb{C})$ under the embedding: $u\rightarrow u\oplus I$.
\end{definition}

\begin{definition}
    Let $e_1,e_2\in QP_{\delta,N,r,k}(B^p_{L,0}(X)^+ \otimes M_n(\mathbb{C}))$. We say $e_1$ is \textit{$(\delta,N,r)$-equivalent} to $e_2$, if there exists a continuous homotopy $a(t')$ in $QP_{\delta,N,r,k}(B^p_{L,0}(X)^+ \otimes M_n(\mathbb{C}))$ for $t'\in [0,1]$,
such that $a(0)=e_1$ and $a(1)=e_2$. Such homotopy is called a \textit{$(\delta,N,r)$-homotopy}.
\end{definition}

Notice that (1) any $e\in QP_{\delta,N,r,k}(X)$ is $(\delta',N',r)$-equivalent to some $f$ for which $f(t)=\pi(f)$ for all $t\in \mathrm{bd}([0,1]^k)$; (2) if $e_1$ is $(\delta,N,r)$-equivalent to $e_2$ and $e_1(t)=\pi(e_1)$, $e_2(t)=\pi(e_2)$ for all $t\in \mathrm{bd}([0,1]^k)$, then there exists a homotopy $a(t')$ in $QP_{\delta'',N'',r,k}(X)$ such that $a(0)=e_1,a(1)=e_2$ and $a(t')(t)=\pi(a(t'))$ for all $t\in \mathrm{bd}([0,1]^k)$, where $\delta',\delta''$ depend only on $\delta,N$; $N',N''$ depend only on $N$.

\begin{definition}
    Let $u_1,u_2$ are two elements in $QU_{\delta,N,r,k}(B^p_{L,0}(X)^+ \otimes M_n(\mathbb{C}))$, we say $u_1$ is $(\delta,N,r)$-equivalent to $u_2$, if there exists a continuous homotopy $w(t')$ in $QU_{\delta,N,r,k}(B^p_{L,0}(X)^+ \otimes M_n(\mathbb{C}))$ for $t'\in [0,1]$ such that $w(0)=u_1$ and $w(1)=u_2$. This equivalence induce an equivalent relation in $QU_{\delta,N,r,k}(X)$.
\end{definition}

    The following lemma tells us that $QP_{\delta,N,r,k}(X)$ can be considered as a controlled version of $K_0(B^p_{L,0}(X) \otimes C_0((0,1)^k))$.
\begin{lemma}\label{Lemma:QuasiVSTrueProjection}
    Let $0<\delta<1/100$ and $\chi$ is a function such that $\chi(x)=1$ for $\mathrm{Re}(x)>1/2$; $\chi(x)=0$ for $\mathrm{Re}(x)<1/2$,\par
(1) for any $e\in QP_{\delta,N,r,k}(X)$, $\chi(e)$ is an idempotent and defines an element $[\chi(e)]\in K_0(B^p_{L,0}(X)\otimes C_0((0,1)^k))$;\par
(2) for any two elements $e_1,e_2\in QP_{\delta,N,r,k}(X)$ satisfying that $e_1$ is $(\delta,N,r)$-equivalent to $e_2$, then $[\chi(e_1)]=[\chi(e_2)]$ in $K_0(B^p_{L,0}(X)\otimes C_0((0,1)^k))$;\par
(3) for any $0<\delta<1/100$, every element in $K_0(B^p_{L,0}(X)\otimes C_0((0,1)^k))$ can be represented as $[\chi(e_1)]-[\chi(e_2)]$, where $e_1,e_2\in QP_{\delta,N,r,k}(X)$ for some $N\geq 1$ and $r>0$.
\end{lemma}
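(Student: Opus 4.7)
The plan is to establish the three parts in turn, with part (3) carrying the bulk of the work. For part (1), the key observation is that the quasi-idempotent condition $\|e(t)^2 - e(t)\| < \delta$ forces, for each $t \in [0,1]^k$, the spectrum of $e(t)$ in $B^p_{L,0}(X)^+ \otimes M_n(\mathbb{C})$ to lie in the sublevel set $\{z \in \mathbb{C} : |z^2 - z| \le \delta\}$, which for $\delta < 1/100$ consists of two small connected regions near $0$ and near $1$, each bounded away from the line $\{\mathrm{Re}(z) = 1/2\}$. Thus $\chi$ extends to a holomorphic function on a neighbourhood of $\sigma(e(t))$ (a single Cauchy contour encircling only the piece near $1$ works uniformly in $t$), so $\chi(e(t))$ is well-defined by the holomorphic functional calculus and satisfies $\chi(e(t))^2 = \chi(e(t))$. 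Norm-continuity of $t \mapsto e(t)$ together with continuity of the functional calculus gives norm-continuity of $t \mapsto \chi(e(t))$. Invoking observation (1) in the remark following the definition of $(\delta, N, r)$-equivalence, I first replace $e$ by a $(\delta', N', r)$-equivalent representative satisfying $e(t) = e_m$ identically on $\mathrm{bd}([0,1]^k)$; then $\chi(e)(t) = e_m$ there, so $\chi(e) - e_m$ is a continuous function $[0,1]^k \to M_n(B^p_{L,0}(X))$ vanishing on $\mathrm{bd}([0,1]^k)$, i.e.\ an element of $M_n(B^p_{L,0}(X) \otimes C_0((0,1)^k))$. Consequently $\chi(e)$ is a genuine idempotent in $M_n((B^p_{L,0}(X) \otimes C_0((0,1)^k))^+)$ with scalar part $e_m$, and $[\chi(e)] - [e_m]$ is the intended class in $K_0(B^p_{L,0}(X) \otimes C_0((0,1)^k))$.

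For part (2), a $(\delta, N, r)$-homotopy $a(t')$ from $e_1$ to $e_2$ is a norm-continuous path through $(\delta, N)$-idempotents, and the spectral argument of part (1) applies uniformly in $t'$, so $t' \mapsto \chi(a(t'))$ is a norm-continuous path of genuine idempotents. After the joint boundary adjustment provided by observation (2) of the same remark, this furnishes an idempotent homotopy in $M_n((B^p_{L,0}(X) \otimes C_0((0,1)^k))^+)$ from $\chi(e_1)$ to $\chi(e_2)$, so $[\chi(e_1)] = [\chi(e_2)]$ in $K_0$.

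For part (3), fix a class in $K_0(B^p_{L,0}(X) \otimes C_0((0,1)^k))$ represented as $[p] - [q]$ for idempotents $p, q \in M_n((B^p_{L,0}(X) \otimes C_0((0,1)^k))^+)$ with common scalar part $\pi(p) = \pi(q) = e_m$; each is a norm-continuous function $[0,1]^k \to B^p_{L,0}(X)^+ \otimes M_n$ equal to $e_m$ on $\mathrm{bd}([0,1]^k)$. Since $B^p_{L,0}(X)$ is by definition the closure of its subalgebra of finite-propagation elements, a partition of unity on $[0,1]^k$ together with density produces continuous approximations $\tilde e, \tilde f : [0,1]^k \to B^p_{L,0}(X)^+ \otimes M_n$ of $p$ and $q$ with uniformly finite propagation and $\sup_t \|\tilde e(t) - p(t)\|, \sup_t \|\tilde f(t) - q(t)\|$ arbitrarily small. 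To restore the boundary condition exactly, pick a continuous scalar bump $\phi : [0,1]^k \to [0,1]$ supported in the interior and equal to $1$ on a large compact subcube, and set
\[
e_1(t) = e_m + \phi(t)\bigl(\tilde e(t) - e_m\bigr), \qquad e_2(t) = e_m + \phi(t)\bigl(\tilde f(t) - e_m\bigr).
\]
Scalar multipliers preserve propagation, so $e_1, e_2$ have finite propagation; because $p(t) - e_m$ is small near $\mathrm{bd}([0,1]^k)$ by continuity and the boundary condition on $p$, choosing the support of $\phi$ large enough makes the perturbation of $\|e_i^2 - e_i\|$ caused by the cut-off negligible, so $e_1, e_2 \in QP_{\delta, N, r, k}(X)$ for suitable parameters. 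A straight-line homotopy between $p$ and $e_1$ stays inside the $(\delta, N)$-idempotents of the unitized big algebra $M_n((B^p_{L,0}(X) \otimes C_0((0,1)^k))^+)$ provided the approximation is close enough, and applying $\chi$ pointwise yields an idempotent path from $p$ to $\chi(e_1)$; likewise for $q$ and $\chi(e_2)$. Hence $[p] - [q] = [\chi(e_1)] - [\chi(e_2)]$ in $K_0$. The main obstacle is precisely this simultaneous juggling of finite propagation, the exact boundary condition, and the quasi-idempotent bound; the bump-function trick trades a controlled defect near the boundary for exact enforcement there, and the parameters $\delta, N, r$ must be tuned so that all three constraints are met at once.
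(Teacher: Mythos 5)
Your proof is correct and follows essentially the same route as the paper: holomorphic functional calculus handles (1)--(2), and for (3) one approximates $p$ by a finite-propagation quasi-idempotent, runs a straight-line homotopy through $(\delta,N)$-idempotents, and applies $\chi$. The bump-function adjustment is superfluous (though not wrong): approximating $p - e_m$ directly inside $M_n\bigl(B^p_{L,0}(X) \otimes C_0((0,1)^k)\bigr)$ by finite sums of elementary tensors $\sum a_i \otimes g_i$ with each $a_i$ of finite propagation already yields finite propagation and exact vanishing on $\mathrm{bd}([0,1]^k)$ simultaneously, which is the density argument the paper invokes.
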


\begin{proof}
    (1) and (2) are straightforward by holomorphic functional calculus and the definition of $(\delta,N,r)$-equivalence. To prove (3), for any $[p]-[q]\in K_0(B^p_{L,0}(X)\otimes C_0((0,1)^k))$ where $p,q\in (B^p_{L,0}\otimes C_0((0,1)^k))^+ \otimes M_n(\mathbb{C})$, let $N=||p||+||1-p||+1$.
     By approximation argument there exists $r>0$ and $e_1\in (B^p_{L,0}\otimes C((0,1)^k) )^+\otimes M_n(\mathbb{C})$ such that $\mathrm{prop}(e_1)<r$ and $||e_1-p||<\frac{\delta}{4N}$. Thus we have $e_1\in QP_{\delta,N,r,k}(X)$. Now we just need to prove $[\chi(e_1)]=[p]$. Let $e(t')=t'e_1+(1-t')p$ for $t'\in [0,1]$. We have $||e^2(t')-e(t')||<\delta$. Thus $\chi(e(t'))$ is a continuous homotopy of projections between $\chi(e(0))=p$ and $\chi(e(1))=\chi(e_1)$.
\end{proof}

    The following lemma tells us that $QU_{\delta,N,r,k}(X)$ can be considered as a controlled version of $K_1(B^p_{L,0}(X) \otimes C_0((0,1)^k))$.
\begin{lemma}\label{Lemma:QuasiVSTrueInvertible}
    Let $0<\delta<1/100$,\par
(1) for any $u\in QU_{\delta,N,r,k}(X)$, $u$ is an invertible element and defines an element $[u]$ in $K_1(B^p_{L,0}(X)\otimes C_0((0,1)^k))$;\par
(2) if $u_1$ is $(\delta,N,r)$-equivalent to $u_2$ in $QU_{\delta,N,r,k}(X)$, then $[u_1]=[u_2]$ in $K_1(B^p_{L,0}(X)\otimes C_0((0,1)^k))$;\par
(3) for any $0<\delta<1/100$, every element in $K_1(B^p_{L,0}(X)\otimes C_0((0,1)^k))$ can be represented as $[u]$, where $u\in QU_{\delta,N,r,k}(X)$ for some $N\geq 1$ and $r>0$.
\end{lemma}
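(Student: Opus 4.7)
The plan is to follow the strategy of Lemma \ref{Lemma:QuasiVSTrueProjection}, replacing the holomorphic functional calculus for idempotents with a direct Neumann-series inversion for invertibles, and adding one collar-deformation step to enforce exact agreement with $I$ on the boundary rather than only $\delta$-approximate agreement.

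For (1), given $u\in QU_{\delta,N,r,k}(X)$ with $(\delta,N,r)$-inverse $v$, I first observe that $\|uv-I\|<\delta<1$, so the Neumann series $\sum_{n\geq 0}(I-uv)^n$ converges and shows that $uv$ is invertible; similarly $vu$ is invertible. Hence each $u(t)$ is a genuine invertible element in $B^p_{L,0}(X)^+\otimes M_n(\mathbb{C})$. To land in $K_1(B^p_{L,0}(X)\otimes C_0((0,1)^k))$, I would apply a collar-neighborhood deformation. Using continuity of $u$ together with the boundary condition $\|u(t)-I\|<\delta$, there is an open neighborhood $U$ of $\mathrm{bd}([0,1]^k)$ on which $\|u(t)-I\|<1/2$. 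Pick a continuous $\varphi:[0,1]^k\to[0,1]$ vanishing on $\mathrm{bd}([0,1]^k)$ and equal to $1$ outside $U$, and set $\tilde u(t)=\varphi(t)u(t)+(1-\varphi(t))I=I+\varphi(t)(u(t)-I)$. Then $\tilde u$ remains invertible everywhere (via Neumann series), agrees with $I$ on $\mathrm{bd}([0,1]^k)$, and still satisfies $\pi(\tilde u)=I$, yielding a well-defined class $[u]:=[\tilde u]\in K_1$.

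For (2), the $(\delta,N,r)$-homotopy $w(t')$ between $u_1$ and $u_2$ consists of genuine invertibles by the same Neumann-series argument. Applying the collar deformation of (1) uniformly in $t'$, using a single $\varphi$ that works for the whole family (which exists by compactness of $[0,1]$ and uniform continuity of $t'\mapsto w(t')$), produces a continuous path of invertibles in $(B^p_{L,0}(X)\otimes C_0((0,1)^k))^+\otimes M_n(\mathbb{C})$ connecting $\tilde u_1$ to $\tilde u_2$, so $[u_1]=[u_2]$.

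For (3), I would mirror the proof of Lemma \ref{Lemma:QuasiVSTrueProjection}(3). Represent a $K_1$-class by $u=I+a$ with inverse $v=I+b$, where $a,b\in B^p_{L,0}(X)\otimes C_0((0,1)^k)\otimes M_n(\mathbb{C})$, and set $N=\|u\|+\|v\|+1$. Since finite-propagation operators are dense in $B^p_{L,0}(X)$, approximate $a$ and $b$ uniformly in norm by $\tilde a,\tilde b$ of uniformly finite propagation $r$ with $\tilde a(t)=\tilde b(t)=0$ on $\mathrm{bd}([0,1]^k)$ (arrange the boundary-vanishing by multiplying the approximants by a cutoff in the $C_0((0,1)^k)$-factor). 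Put $\tilde u=I+\tilde a$ and $\tilde v=I+\tilde b$. The identity $uv=I$ gives $\tilde u\tilde v-I=(\tilde a-a)+(\tilde b-b)+(\tilde a\tilde b-ab)$, which is under $\delta$ once the approximation is fine enough; the analogous estimate holds for $\tilde v\tilde u-I$. Hence $\tilde u\in QU_{\delta,N,r,k}(X)$ with $(\delta,N,r)$-inverse $\tilde v$. The straight-line homotopy $u_s=(1-s)u+s\tilde u$ lies in the invertibles for sufficiently close approximations, so $[u]=[\tilde u]$ in $K_1$. The main obstacle will be this joint approximation: we must approximate $a$ and $b$ simultaneously by finite-propagation representatives while preserving both the boundary-vanishing in $C_0((0,1)^k)$ and the near-invertibility relation $\tilde a\tilde b\approx ab$. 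This hinges on the fact that finite-propagation elements form a dense subalgebra closed under products and that the defining conditions of $QU_{\delta,N,r,k}$ are all open, so sufficiently close approximations automatically satisfy them.
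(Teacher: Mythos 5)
Your proposal is correct and follows essentially the same route as the paper: invertibility via the Neumann series (the paper says the same thing as ``the set of invertibles is open''), homotopy-invariance of $K_1$ for (2), and for (3) a finite-propagation approximation of a representative and its inverse followed by a straight-line homotopy, which is exactly the paper's argument with $u'=I+a$, $u'^{-1}=I+b$. The collar deformation you introduce in (1)--(2) to force exact equality with $I$ on $\mathrm{bd}([0,1]^k)$ makes explicit a point the paper leaves implicit (via the remark preceding the definition of $(\delta,N,r)$-equivalence that any $QP$/$QU$ element is equivalent to one agreeing exactly with its scalar part on the boundary), so this is a welcome clarification rather than a different method.
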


\begin{proof}
    (1) is true since the set of invertible elements in Banach algebra is open.\par
     (2) is true by the definition of $(\delta,N,r)$-equivalence. \par
     To prove (3), we assume $[u']\in K_1(B^p_{L,0}(X)\otimes C_0((0,1)^k))$. Let $N=||u'||+||u'^{-1}||+1$, then there exists $r>0$ and $u,v\in B^p_{L,0}(X)\otimes C_0((0,1)^k)\otimes M_n(\mathbb{C})$ such that $||u-u'||<\frac{\delta}{2N}, ||v-u'_{-1}||<\frac{\delta}{2N}$ and $\mathrm{prop}(u)<r,\mathrm{prop}(v)<r$. We have $||u||\leq N, ||v||\leq N$ and $||uv-I||<\delta,||vu-I||<\delta$. Hence $u\in QU_{\delta,N,r,k}(X)$. Let $w(t)=tu+(1-t)u'$ for $t\in [0,1]$. We have $||w(t)u'^{-1}-I||<\delta<1/100$. Thus $w(t)u'^{-1}$ is an invertible element, so is $w(t)$. Therefore $[u]=[u']$ in $K_1(B^p_{L,0}(X)\otimes C_0((0,1)^k))$.
\end{proof}

\begin{lemma}\label{Lemma:HomotopyImpliesLipschitz}(Lemma 2.29 in \cite{ChungQuantiativeBanach})   
    If $e$ is $(\delta,N,r)$-equivalent to $f$ by a homotopy $e_{t'}(t'\in [0,1])$ in $QP_{\delta,N,r,k}(X)$, then there exists $\alpha_N>0,m\in \mathbb{N}$ such that $e\oplus I_m\oplus 0_m$ is $(2\delta,3N,r)$-equivalent to $f\oplus I_m\oplus 0_m$ by a $\alpha_N$-Lipschitz homotopy, where $\alpha_N$ depends only on $N$ and not on $e,f,\delta,r$; and $m$ depends only on $\delta,N,e_{t'}$.
\end{lemma}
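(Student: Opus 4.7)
The plan is to discretize $e_{t'}$, replace each discrete step by a local Lipschitz homotopy, and then use the stabilization by $I_m\oplus 0_m$ to glue the steps so that the overall Lipschitz constant does not scale with the number of steps.

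First I would exploit uniform continuity of the continuous path $t'\mapsto e_{t'}$ from $[0,1]$ into $QP_{\delta,N,r,k}(X)$ (equipped with the supremum norm over $[0,1]^k$) to choose a partition $0=t_0<t_1<\cdots<t_m=1$ satisfying $\|e_{t_i}-e_{t_{i-1}}\|<\eta$ for every $i$, where $\eta=\eta(\delta,N)>0$ is a small threshold proportional to $\delta/N$. The integer $m$ is thereby determined by $\delta$, $N$ and the modulus of continuity of $e_{t'}$, matching the dependence asserted in the statement. Writing $p_i:=e_{t_i}$, the choice of $\eta$ forces the naive linear path $s\mapsto (1-s)p_{i-1}+sp_i$ to remain a $(2\delta,2N)$-idempotent of propagation $\leq r$: direct expansion of the square together with $p_{i-1}^2\approx p_{i-1}$, $p_i^2\approx p_i$ and $\|p_{i-1}-p_i\|<\eta$ controls the idempotency defect by $O(\delta+\eta N)=O(\delta)$.

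Next I would replace each of these short linear paths by a Lipschitz one inside a $2\times 2$ block, by conjugating through an empty slot: use a scalar rotation $R(s)$ of the form already appearing in Lemma \ref{Lemma:IsometryPairHom} to take $p_{i-1}\oplus 0$ to $0\oplus p_{i-1}$, linearly interpolate in the second slot to $0\oplus p_i$, and then rotate back to $p_i\oplus 0$. Because $R(s)$ acts by a scalar $2\times 2$ matrix on the $L^p$-$X$-module, conjugation by it preserves propagation and the $(2\delta,3N)$-idempotent parameters, and because it is an explicit trigonometric path, each such local homotopy has Lipschitz constant bounded by an absolute constant depending only on $N$ (independent of $\eta,m,\delta,r$ and of $e,f$).

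To assemble the $m$ local pieces into a single homotopy from $e\oplus I_m\oplus 0_m$ to $f\oplus I_m\oplus 0_m$ without the Lipschitz constant scaling with $m$, one uses the $m$ identity slots supplied by the stabilization as parallel channels: each local homotopy is executed inside its own designated $2\times 2$ block spanned by the main slot and one of the identity slots, with the global path arranged as one chain of scalar-rotation conjugations composed with block-diagonal interpolations, rather than an $m$-fold sequential concatenation. This third step is the main obstacle: the bookkeeping required to verify that at every intermediate parameter the element is still a $(2\delta,3N)$-idempotent of propagation $\leq r$ and still satisfies the boundary conditions defining $QP_{2\delta,3N,r,k}(X)$ (closeness to the model idempotent on $\mathrm{bd}([0,1]^k)$ and exact equality after applying $\pi$), while the concatenation uniformly yields a Lipschitz constant $\alpha_N$ depending only on $N$, is the technical heart of the argument; the universality of the rotations and the control $\eta\leq \alpha_N$ on the interpolations are what make $\alpha_N$ independent of $m$.
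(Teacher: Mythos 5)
Your steps 1 and 2 are essentially the paper's: partition $[0,1]$ using uniform continuity so that consecutive $p_i:=e_{t'_i}$ satisfy a smallness bound of order $\delta/N$, and use scalar $2\times 2$ rotations $R(s)$ (which preserve propagation and have Lipschitz constant controlled by $N$) together with short linear interpolations to build local Lipschitz homotopies. That matches the paper's construction of a Lipschitz $(\delta,3N,r)$-homotopy between $I\oplus 0$ and $e_{t'}\oplus(I-e_{t'})$ via the path $(e_{t'}\oplus 0)+R^*(s)((I-e_{t'})\oplus 0)R(s)$ combined with a small linear correction.

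The gap is in step 3, which you yourself flag as ``the technical heart of the argument'' and leave unproved. The phrase ``each local homotopy is executed inside its own designated $2\times 2$ block spanned by the main slot and one of the identity slots'' does not describe a well-defined homotopy: there is only one main slot, and $m$ disjoint $2\times 2$ blocks cannot all contain it. If instead you run the $m$ homotopies $p_{i-1}\rightsquigarrow p_i$ in genuinely disjoint blocks, what you obtain is a Lipschitz path from $\bigoplus_{i=1}^m p_{i-1}$ to $\bigoplus_{i=1}^m p_i$, not from $e\oplus I_m\oplus 0_m$ to $f\oplus I_m\oplus 0_m$; the two pairs of endpoints do not match, so nothing has been proved yet. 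The missing content is exactly the telescoping ``shift trick'' used in the paper: first perform, \emph{simultaneously in all $m$ blocks}, the rotation homotopy $I\oplus 0\rightsquigarrow (I-p_j)\oplus p_j$ (this is one Lipschitz move of universal constant, not $m$ of them); then perform the \emph{simultaneous short linear shifts} $(I-p_j)\rightsquigarrow (I-p_{j-1})$ in every block (small norm $<\eta$, hence small Lipschitz constant); then rotate each resulting pair $p_{j-1}\oplus(I-p_{j-1})$ back to $I\oplus 0$, again simultaneously; finally rearrange the block order. Only the telescoping of indices across blocks, not a ``parallel copy of each local path,'' yields the correct endpoints $e_{t'_0}\oplus I_m\oplus 0_m$ and $e_{t'_m}\oplus I_m\oplus 0_m$ while keeping $\alpha_N$ independent of $m$. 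Without spelling this chain out and checking that each intermediate stage remains a $(2\delta,3N)$-idempotent of propagation $\leq r$ that respects the boundary and $\pi$-conditions, the proof is not complete.
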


\begin{proof}
   There exists a partition $0=t'_0<t'_1<\ldots<t'_m=1$ such that
        $$||e_{t'_i}-e_{t'_{i-1}}||<\inf_{t'\in [0,1]} \frac{\delta-||e^2_{t'}-e_{t'}||}{2N+1}.$$
For each $t'$, we have a Lipschitz $(\delta,3N,r)$-homotopy between $I\oplus 0$ and $e_{t'}\oplus (1-e_{t'})$ given by combining the linear homotopy connecting $I\oplus 0$ to $(e_{t'}-e^2_{t'})\oplus 0$ and the homotopy
      $$(e_{t'}\oplus 0)+R^*(s)((1-e_{t'})\oplus 0)R(s),$$
where $R(s)=\begin{pmatrix}
        \cos(\pi s/2) & \sin(\pi s/2)\\
        -\sin(\pi s/2) & \cos(\pi s/2)
        \end{pmatrix}$.
Obviously, the linear homotopy between $e_{t'_{i-1}}$ and $e_{t'_i}$ is Lipschitz for all $i$. Then\par
     $\begin{pmatrix}
     e_{t'_0} &     &    \\
              & I_m &    \\
              &     &   0_m
      \end{pmatrix}\\
\simeq \begin{pmatrix}
 e_{t'_0} &   &   &        &   &\\
          & I &   &        &   &\\
          &   & 0 &        &   &\\
          &   &   & \ddots &   &\\
          &   &   &        & I & \\
          &   &   &        &   & 0
 \end{pmatrix}
\simeq \begin{pmatrix}
 e_{t'_0} &            &            &        &            &\\
          & I-e_{t'_1} &            &        &            &\\
          &            & e_{t'_1}   &        &            &\\
          &            &            & \ddots &            &\\
          &            &            &        & I-e_{t'_m} & \\
          &            &            &        &            & e_{t'_m}
 \end{pmatrix}\\
\simeq \begin{pmatrix}
 e_{t'_0} &            &            &        &                &\\
          & I-e_{t'_0} &            &        &                &\\
          &            & e_{t'_1}   &        &                &\\
          &            &            & \ddots &                &\\
          &            &            &        & I-e_{t'_{m-1}} & \\
          &            &            &        &                & e_{t'_m}
 \end{pmatrix}
\simeq \begin{pmatrix}
        I &   &   &        &   &\\
          & 0 &   &        &   &\\
          &   & I &        &   &\\
          &   &   & \ddots &   &\\
          &   &   &        & 0 & \\
          &   &   &        &   & e_{t'_m}
 \end{pmatrix}\\
\simeq \begin{pmatrix}
        e_{t'_m} &     &    \\
                 & I_m &    \\
                 &     & 0_m
      \end{pmatrix}$.\\
Where $\simeq$ represent $(2\delta,3N,r)$-equivalence by Lipschitz homotopy.
   \end{proof}

We remark that we have a result for $QU$ similar to the above lemma, i.e., homotopy implies Lipschitz homotopy. \par

The following lemma tells us that homotopy equivalence of two quasi-invertible elements implies  homotopy equivalence of their quasi-inverses.

\begin{lemma}\label{Lemma:EquivImplyInverseEquiv}
    Let $u_1,u_2$ be two elements in $QU_{\delta,N,r,k}(X)$ with $(\delta,N)$-inverse $v_1,v_2$ respectively, if $u_1$ is $(\delta,N,r)$-equivalent to $u_2$, then $v_1$ is $(4\delta,2N,r)$-equivalent to $v_2$ in $QU_{4\delta,2N,r,k}(X)$.
\end{lemma}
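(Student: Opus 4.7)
The plan is to construct an explicit continuous path of quasi-inverses from $v_1$ to $v_2$ by piecewise linear interpolation between suitable inverses at a fine partition of the homotopy parameter. Let $w:[0,1]\to QU_{\delta,N,r,k}(B^p_{L,0}(X)^+\otimes M_n(\mathbb{C}))$ be the homotopy realising $u_1\sim u_2$. By uniform continuity of $w$ on the compact product $[0,1]\times[0,1]^k$, choose a partition $0=s_0<s_1<\cdots<s_m=1$ such that $\|w(s)(t)-w(s_j)(t)\|<\delta/N$ for every $t\in[0,1]^k$ whenever $s\in[s_j,s_{j+1}]$. For each intermediate $j$, let $\tilde v_j$ be any $(\delta,N,r)$-inverse of $w(s_j)$ (guaranteed by the fact that $w(s_j)\in QU_{\delta,N,r,k}$), and set $\tilde v_0:=v_1$, $\tilde v_m:=v_2$. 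Define
\[
v(s):=(1-\lambda)\tilde v_j+\lambda\tilde v_{j+1},\qquad s\in[s_j,s_{j+1}],\quad \lambda=\frac{s-s_j}{s_{j+1}-s_j}.
\]

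The bulk of the work is then to verify that $s\mapsto v(s)$ is a $(4\delta,2N,r)$-equivalence from $v_1$ to $v_2$ in $QU_{4\delta,2N,r,k}(X)$. Convex combinations preserve both the norm bound $\|v(s)\|\leq N$ and the propagation bound $\mathrm{prop}(v(s))\leq r$. For the approximate inverse relation, decomposing $w(s)\tilde v_j=w(s_j)\tilde v_j+(w(s)-w(s_j))\tilde v_j$ yields $\|w(s)\tilde v_j-I\|<\delta+N\cdot(\delta/N)=2\delta$, the same bound holds for $\tilde v_{j+1}$, and convexity then gives $\|w(s)v(s)-I\|<2\delta$; a symmetric computation yields $\|v(s)w(s)-I\|<2\delta$. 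Hence $w(s)$ itself serves as a $(2\delta,N,r)$-inverse of $v(s)$, so $v(s)\in QU_{2\delta,N,r,k}(X)\subseteq QU_{4\delta,2N,r,k}(X)$. The cube-boundary condition $\|v(s)(t)-I\|<\delta$ for $t\in\mathrm{bd}([0,1]^k)$ and the quotient condition $\pi(v(s))=I$ descend from the $\tilde v_j$ by convexity. The path is manifestly continuous in $s$ with endpoints $v_1$ and $v_2$, and the convex combinations are taken pointwise in $t$, so continuity in the cube variable is inherited from the $\tilde v_j$.

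The main technical point is the uniform estimate across the cube parameter $t\in[0,1]^k$, which ensures a single finite partition of $[0,1]$ works for all $t$ simultaneously; this is precisely what uniform continuity of $w$ on the compact product $[0,1]\times[0,1]^k$ provides. The slack in the target constants ($4\delta$ rather than $2\delta$, and $2N$ rather than $N$) is more than enough to absorb the small worsening introduced by the interpolation and any bookkeeping one might wish to insert when passing between the roles of ``element of $QU$'' and ``quasi-inverse.''
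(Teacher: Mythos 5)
Your proof is correct and follows essentially the same strategy as the paper's: partition the homotopy parameter using uniform continuity, pick a $(\delta,N,r)$-inverse at each partition point, and linearly interpolate. Your bookkeeping is in fact slightly sharper --- the convex combination gives $\|w(s)v(s)-I\|<2\delta$ and $\|v(s)\|\leq N$ rather than the $4\delta$ and $2N$ the paper records --- but both land comfortably inside $QU_{4\delta,2N,r,k}(X)$, so the argument is the same.
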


\begin{proof}
    Let $w(t')$ be the homotopy path jointing $u_1$ and $u_2$, for $\varepsilon=\frac{\delta}{N}$, there exists a partition $0=t'_0<t'_1<\ldots<t'_n=1$ such that
      $$\max_{0\leq i \leq n-1} \left\{||w(l)-w(l')||:t'_i\leq l,l' \leq t'_{i+1} \right\}<\frac{\delta}{N}.$$
Assume $s_{t'_i}$ is the $(\delta,N,r)$-inverse of $w(t'_i)$, we require $s_0=v_1,s_1=v_2$. Let
      $$s(t')=\frac{t'-t'_i}{t'_{i+1}-t'_i}s_{t'_{i+1}}-\frac{t'-t'_{i+1}}{t'_{i+1}-t'_i}s_{t'_i},t'_i\leq t' \leq t'_{i+1}$$
We have $||s_{t'_i}w(t')-I||\leq ||s_{t'_i}||\cdot ||w(t')-w(t'_i)||+||s_{t'_i}w(t'_i)-I||\leq 2\delta$ for $t'_i\leq t'\leq t'_{i+1}$. Then $||s(t')w(t')-I||\leq 4\delta$. Similarly, $||w(t')s(t')-I||\leq 4\delta$.\par
    Obviously, $||s(t')||\leq 2N$ and $\mathrm{prop}(s(t'))<r$. Thus $s(t')$ is a continuous homotopy between $v_1$ and $v_2$ in $QU_{4\delta,2N,r,k}(X)$.
\end{proof}

The following two lemmas can be viewed as the controlled version of the classical result in $K$-theory that stable homotopy equivalence of idempotents is the same as stably similarity.

\begin{lemma}\label{Lemma:HomotopyImpliesSimilar}
Let $0<\delta<1/100$. If $e$ is $(\delta,N,r)$-equivalent to $f$ in $QP_{\delta,N,r,k}(X)$, then there exist a positive number $m$ and an element $u$ in $QU_{\delta,C_1(N),C_2(N,\delta)r,k}(X)$ with $(\delta,C_1(N),C_2(N,\delta)r)$-inverse $v$, such that
                          $$||f\oplus I_m\oplus 0_m - v(e\oplus I_m\oplus 0_m)u||<C_3(N)\delta,$$
where $C_1(N)$ and $C_3(N)$ depend only on $N$, $C_2(N,\delta)$ depends only on $N$ and $\delta$.
\end{lemma}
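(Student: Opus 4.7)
My plan is to reduce to a Lipschitz homotopy and then chain together the classical $K$-theoretic similarity formula for nearby idempotents.

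First, apply Lemma~\ref{Lemma:HomotopyImpliesLipschitz} to obtain an $\alpha_N$-Lipschitz $(2\delta,3N,r)$-homotopy $\{e_{t'}\}_{t'\in[0,1]}$ from $e\oplus I_m\oplus 0_m$ to $f\oplus I_m\oplus 0_m$ (with $m$ produced by that lemma). Pick a uniform partition $0=s_0<s_1<\cdots<s_n=1$, so that $h:=\max_i\|e_{s_{i+1}}-e_{s_i}\|\leq\alpha_N/n$; the integer $n$ will be tuned as a function of $N$ and $\delta$ at the end. For each $i$ set
\[
z_i := e_{s_{i+1}}e_{s_i}+(1-e_{s_{i+1}})(1-e_{s_i}),\qquad z'_i := e_{s_i}e_{s_{i+1}}+(1-e_{s_i})(1-e_{s_{i+1}}).
\]
The identity $z_i-I=(2e_{s_{i+1}}-I)(e_{s_i}-e_{s_{i+1}})$ (and its transpose for $z'_i$) yields $\|z_i-I\|,\|z'_i-I\|\leq(6N+1)h$. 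A straightforward algebraic expansion, familiar from the honest-idempotent case plus an $O_N(\delta)$ correction absorbing the quasi-idempotent defects $e_{s_i}^2-e_{s_i}$ and $e_{s_{i+1}}^2-e_{s_{i+1}}$, gives $z_iz'_i=I-(e_{s_i}-e_{s_{i+1}})^2+O_N(\delta)$ and the key one-step estimate $\|z_ie_{s_i}z'_i-e_{s_{i+1}}\|\leq C\,N^2(h^2+\delta)$.

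Now set $u:=z'_0z'_1\cdots z'_{n-1}$ and $v:=z_{n-1}\cdots z_1z_0$, and verify three items. \emph{(i) Norm:} $\|u\|,\|v\|\leq (1+(6N+1)h)^n\leq e^{(6N+1)\alpha_N}=:C_1(N)$, independent of $\delta$ because $nh\leq\alpha_N$. \emph{(ii) Propagation:} $\mathrm{prop}(u),\mathrm{prop}(v)\leq 2nr=:C_2(N,\delta)r$. \emph{(iii) Approximate conjugation:} writing $X_i:=z_{i-1}\cdots z_0\,e_{s_0}\,z'_0\cdots z'_{i-1}$ and $E_i:=X_i-e_{s_i}$, the recursion $E_{i+1}=\eta_i+z_iE_iz'_i$ with $\|\eta_i\|\leq CN^2(h^2+\delta)$ telescopes to
\[
\bigl\|v\,e_{s_0}\,u-e_{s_n}\bigr\|\leq C_1(N)^{2}\cdot n\cdot CN^2(h^2+\delta).
\]
The boundary and matrix-unital conditions for $u\in QU_{\delta,C_1(N),C_2(N,\delta)r,k}(X)$ are immediate: on $\mathrm{bd}([0,1]^k)$ every $e_{s_i}$ is $O(\delta)$-close to the fixed $e_m$, so each $z_i$ is $O_N(\delta)$-close to $I$, while $\pi(z_i)=e_m^2+(I-e_m)^2=I$ because $e_m$ is a genuine idempotent.

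The main technical obstacle will be sharpening (iii) to the advertised $C_3(N)\delta$, since the naive balance $h\sim\sqrt\delta$ only yields error $O(\sqrt\delta)$. I expect to resolve this by replacing $z'_i$ by a finite-propagation higher-order correction such as $\tilde z'_i := z'_i\bigl(I+(e_{s_i}-e_{s_{i+1}})^2\bigr)$, which satisfies $z_i\tilde z'_i=I-(e_{s_i}-e_{s_{i+1}})^4+O_N(\delta)$. Iterating this polynomial improvement a fixed number of times cuts the per-step error to $O_N(h^{2^k}+\delta)$, after which a suitable choice of $n$ (polynomial in $N,\delta^{-1}$) drives the telescoped total below $C_3(N)\delta$ while keeping the norm bound $C_1(N)$ untouched and the propagation bound absorbed into $C_2(N,\delta)r$. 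The remaining work to place $u$ and $v$ formally inside $QU_{\delta,C_1(N),C_2(N,\delta)r,k}(X)$ is routine bookkeeping.
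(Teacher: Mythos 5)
Your product $z_iz'_i$ is the same intertwining construction the paper uses, so the general strategy is correct. But the way you propose to tune the two free parameters (partition size $n$ and degree of the polynomial correction) is exactly backwards, and as stated it cannot close the gap you already noticed.

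The crucial estimate you are missing is that the intertwining error has \emph{no} $h$-dependence at all: with $w_i := e_{t'_i}e_{t'_{i+1}}+(1-e_{t'_i})(1-e_{t'_{i+1}})$ one computes
\[
e_{t'_i}w_i - w_ie_{t'_{i+1}} \;=\; 2(e_{t'_i}^2-e_{t'_i})e_{t'_{i+1}} - 2e_{t'_i}(e_{t'_{i+1}}^2-e_{t'_{i+1}}) - (e_{t'_i}^2-e_{t'_i}) + (e_{t'_{i+1}}^2-e_{t'_{i+1}}),
\]
so $\|e_{t'_i}w_i - w_ie_{t'_{i+1}}\| \le 26N\delta$ regardless of $|t'_{i+1}-t'_i|$. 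The $h^2$ term in your one-step bound is coming entirely from the defect $\|z_iz'_i - I\|$, i.e.\ from using the \emph{other} intertwiner $z_i$ as a quasi-inverse. This tells you that the per-step error is bounded below by $\Omega_N(\delta)$ no matter what you use for the inverse, so the telescoped total contains a term $\gtrsim n\delta$, and any plan with $n$ growing as a (positive) power of $\delta^{-1}$ cannot produce an $O(\delta)$ total. Your ``$n$ polynomial in $N,\delta^{-1}$'' is therefore self-defeating: it is incompatible with ``drives the telescoped total below $C_3(N)\delta$.''

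The paper resolves this by turning the knobs the other way. Pick the partition so fine that $\alpha_N|t'_{i+1}-t'_i| < \tfrac{1}{2N+1}$; then $n$ depends only on $N$, and moreover $\|I-w_i\| < 1/2 + 4\delta < 1$, so $w_i$ is a genuinely invertible element with Neumann series $w_i^{-1}=\sum_{j\ge 0}(I-w_i)^j$. Set $u = w_0\cdots w_{n-1}$, and for $v$ take $v_i = \sum_{j=0}^{l}(I-w_i)^j$ with $l$ chosen so large (depending on $N$ and $\delta$) that $\|v_i - w_i^{-1}\|$ is small enough to force $\|I-uv\|,\|I-vu\|<\delta$, then $v=v_{n-1}\cdots v_0$. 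Because $n=n(N)$ is fixed, chaining the $O_N(\delta)$ intertwining estimate across $n$ steps gives $\|ue_1-e_0u\|\le C'(N)\delta$, and combining with $\|I-vu\|<\delta$ gives $\|e_1 - v e_0 u\|< C_3(N)\delta$ with $C_3$ depending only on $N$. The $\delta$-dependence lands entirely on the propagation $C_2(N,\delta)r$, through the truncation length $l\sim\log(1/\delta)$. Your ``iterated polynomial correction'' idea is in the right spirit --- it \emph{is} a truncated Neumann series for $w_i^{-1}$ --- but you must make its degree, not $n$, depend on $\delta$, and keep $n$ a function of $N$ alone.
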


\begin{proof}
By Lemma \ref{Lemma:HomotopyImpliesLipschitz}, there exists $\alpha_N>0,m\in \mathbb{N}$ such that $e\oplus I_m\oplus 0_m$ is $(2\delta,3N,r)$-equivalent to $f\oplus I_m\oplus 0_m$ by an $\alpha_N$-Lipschitz homotopy $e_{t'}$, i.e., $||e_{t'}-e_{t''}||\leq \alpha_N|t'-t''|$ for any $t',t''\in [0,1]$. There exists a partition $0=t'_0<t'_1<\ldots<t'_n=1$ such that
                                 $$\alpha_N|t'_{i+1}-t'_i|<\frac{1}{2N+1}$$
Let $w_i=((2e_{t'_i}-I)(2e_{t'_{i+1}}-I)+I)/2$. We have $I-w_i=(2e_{t'_i}-I)(e_{t'_i}-e_{t'_{i+1}})+2(e_{t'_i}-e^2_{t'_i})$. Then                                 $$||I-w_i||<||2e_{t'_i}-I||\cdot ||e_{t'_i}-e_{t'_{i+1}}||+2||e_{t'_i}-e^2_{t'_i}||<1/2+4\delta<1$$
Thus $w_i$ is an invertible element and $w^{-1}_i=\Sigma^{\infty}_{j=0}(1-w_i)^j$. Let $v_i=\Sigma^l_{j=0}(I-w_i)^j$ satisfying $||v_i-w^{-1}_i||<\delta/2((\max_{i}\{||w_i||,||w^{-1}_i||\}+1)^n)$. Let
             $$u=w_0w_1\ldots w_{n-1},v=v_{n-1}v_{n-2}\ldots v_0.$$
Then $\max\{||u||,||v||\} \leq C_1(N)$, $\max\{\mathrm{prop}(u(t)),\mathrm{prop}(v(t))\} \leq C_2(N,\delta)r$ for $t\in[0,1]^k$ and $\max\{||I-uv||,||I-vu||\}<\delta$, where $C_1(N)$ depends only on $N$ and $C_2(N,\delta)$ depends only on $N,\delta$.\par
By computation, we have $||e_{t'_i}w_i-w_ie_{t'_{i+1}}||<26N\delta$. Then $||ue_1-e_0u||<C'N$, where $C'$ depends only on $N$. Thus
           $$||e_1-v(e_0)u||=||e_1-vue_1+v(ue_1-e_0u)||<C_3(N)\delta,$$
where $C_3(N)$ depends only on $N$.
\end{proof}

\begin{lemma}\label{Lemma:SimilarImplyHomotopy}
    Let $N\geq 1$, $0<\delta<1/(800N^4), 0<\varepsilon<1/400$. For $e$ and $f$ in $QP_{\delta,N,r,k}(B^p_{L,0}(X)^+\otimes M_n(\mathbb{C}))$, if there exists $u$ in $QU_{\delta,N,r,k}(X)$ with $(\delta,N,r)$-inverse $v$ satisfying $||uev-f||<\varepsilon$, then $e\oplus 0_n$ is $(2\varepsilon+4N^4\delta,2N^3,3r)$-equivalent to $f\oplus 0_n$ in $QP_{2\varepsilon+4N^4\delta,2N^3,3r,k}(X)$.
\end{lemma}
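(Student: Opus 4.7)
The plan is to adapt the standard $K$-theoretic argument that similar idempotents are homotopic to the controlled setting, with careful bookkeeping of the norm, idempotent defect, and propagation. The key idea is that the hypothesis $\|uev - f\|<\varepsilon$ exhibits $e$ and $f$ as approximately conjugate by the quasi-invertible pair $(u,v)$ in $M_n(B^p_{L,0}(X)^+)$; in the stabilized matrix algebra $M_{2n}(B^p_{L,0}(X)^+)$, this can be promoted to an explicit homotopy of $(\delta',N',r')$-idempotents via the Whitehead trick for $\mathrm{diag}(u,v)$.

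I would begin by constructing, in $M_2(M_n(B^p_{L,0}(X)^+))$, a continuous path $Z_s$ ($s\in[0,1]$) from $I_{2n}$ to $\mathrm{diag}(u,v)$ together with its approximate inverse $Z_s^+$, both having norm $\le N$, propagation $\le r$, and scalar part $\pi(Z_s)=\pi(Z_s^+)=I_{2n}$. This can be done by decomposing $\mathrm{diag}(u,v)$ as a product of shear factors of the form $I+w$ with $w$ nilpotent (hence exactly invertible), chosen so that the non-scalar part of each shear has norm $\le N$; contracting each factor via $w\mapsto sw$ yields $Z_s$. Define $p_s:=Z_s(e\oplus 0_n)Z_s^+$. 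Then $p_0=e\oplus 0_n$ and $p_1\approx(uev)\oplus 0_n$, with
\[
\|p_s^2-p_s\|\le \|Z_s\|^2\|Z_s^+\|^2\|e^2-e\|+\|Z_s\|\|Z_s^+\|\|e\|^2\|Z_s^+Z_s-I\|\le 4N^4\delta,
\]
$\|p_s\|\le \|Z_s\|\|e\|\|Z_s^+\|\le N^3$, and $\mathrm{prop}(p_s)\le 3r$. Because $\pi(Z_s)=I_{2n}$, we get $\pi(p_s)=e_m\oplus 0_n$, verifying condition (3); on $\mathrm{bd}([0,1]^k)$, the inequalities $\|u-I\|,\|v-I\|,\|e-e_m\|<\delta$ propagate to give $\|p_s-e_m\oplus 0_n\|\le O(N^2\delta)$, verifying condition (2). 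Finally, the linear homotopy $s\mapsto (1-s)((uev)\oplus 0_n)+s(f\oplus 0_n)$ connects $p_1$ to $f\oplus 0_n$; by $\|uev-f\|<\varepsilon$, each intermediate element has idempotent defect at most $2\varepsilon$ plus the previous $4N^4\delta$, and the norm increases by at most a factor of $2$, giving the asserted bound $2N^3$. Concatenation of the two stages produces the required $(2\varepsilon+4N^4\delta,2N^3,3r)$-homotopy.

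The hard part will be item (i): constructing the path $Z_s$ in $M_{2n}$ with \emph{scalar part identically $I_{2n}$} along the entire homotopy, since the usual Whitehead factorization of $\mathrm{diag}(u,v)$ includes a $\begin{pmatrix}0&-I\\ I&0\end{pmatrix}$-type rotation whose scalar part does not commute with $e_m\oplus 0_n$ and so would spoil condition (3). I would handle this by replacing each shear $I+w$ with its $\pi$-trivial version $I+(w-\pi(w)I)$, which remains exactly invertible (since the off-diagonal part is still nilpotent) and has scalar part $I_{2n}$, at the price of shifting the endpoint by a bounded correction that is absorbed into the allowed $4N^4\delta$ error. The remaining routine task is the arithmetic bookkeeping to verify that the constants collapse to exactly $(2\varepsilon+4N^4\delta,2N^3,3r)$; the hypothesis $\delta<1/(800N^4)$ is tailored so that $4N^4\delta<1/200$, keeping the final defect safely below $1/100$ so that the output lies in a well-defined $QP$ class and allows subsequent use of holomorphic functional calculus.
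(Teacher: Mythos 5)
Your high-level plan is the right one and matches the paper's: conjugate $e\oplus 0_n$ along a path of approximate invertibles from the identity to $\mathrm{diag}(u,v)$ (equivalently, the reverse direction), then finish with the linear homotopy between $(uev)\oplus 0_n$ and $f\oplus 0_n$. The difference is in how the conjugating path is built, and this is where your proposal develops real problems that the paper's construction avoids entirely. The paper does \emph{not} factor $\mathrm{diag}(u,v)$ into shears. Instead it uses the single rotation conjugation
\[
e_{t'} = R(t')(u\oplus I_n)R^*(t')\,(e\oplus 0_n)\,R(t')(v\oplus I_n)R^*(t'),\qquad
R(t')=\begin{pmatrix}\cos(\pi t'/2)&\sin(\pi t'/2)\\ -\sin(\pi t'/2)&\cos(\pi t'/2)\end{pmatrix},
\]
which at $t'=0$ is $(uev)\oplus 0_n$ and at $t'=1$ is $e\oplus 0_n$. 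Because the conjugator is itself a conjugate of $u\oplus I_n$ by the \emph{scalar} matrix $R(t')$, its scalar part is $R(t')\cdot I_{2n}\cdot R^*(t')=I_{2n}$ automatically, so condition (3) of the definition of $QP$ holds for free; its propagation is $\mathrm{prop}(u)\le r$ (the rotation has propagation $0$); and it is a single factor of $u$, so the norm and propagation accounting closes exactly at $2N^3$ and $3r$. Your scalar-part worry about the Whitehead path is therefore a non-issue for the paper's path, but it is a genuine issue for yours.

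Moreover, the specific repair you propose does not work. Replacing each shear $I+w$ by $I+(w-\pi(w))$ changes the product by an $O(1)$ amount, not an $O(\delta)$ amount: for example, after the replacement the top-left block of the product of the three shears becomes $I-(u-I)(v-I)$ rather than (approximately) $u$, so the endpoint $Z_1$ is nowhere near $\mathrm{diag}(u,v)$. A correct way to make the Whitehead path $\pi$-trivial is to multiply through by $\pi(Z_s)^{-1}$, which is a scalar matrix of propagation $0$ and bounded norm, and this does preserve the endpoints because the Whitehead identity forces $\pi(Z_1)=I_{2n}$; but even with that fix, the path consists of several shear factors each of norm up to $1+N$ and of propagation up to $r$, so $\|Z_s\|$ and $\|Z_s^+\|$ are of order $N^k$ for $k\ge 3$ (not $\le N$ as you assert) and $\mathrm{prop}(Z_s)$ is a multiple of $r$ (not $\le r$). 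Consequently the constants in your estimate $\|p_s^2-p_s\|\le 4N^4\delta$, $\|p_s\|\le N^3$, $\mathrm{prop}(p_s)\le 3r$ do not follow from your construction, and the conclusion $(2\varepsilon+4N^4\delta,2N^3,3r)$ would be violated. The single rotation trick is what makes the advertised constants achievable, and is the key simplification you are missing.
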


\begin{proof}
Let $e_{t'}$ be a homotopy connecting $f\oplus 0_n$ to $e\oplus 0_n$ obtained by combining the linear homotopy connecting $f\oplus 0_n$ to $uev\oplus 0_n$ with the following homotopy connecting $uev\oplus 0_n$ to $e\oplus 0_n$:
          $$R(t')(u\oplus I_n)R^*(t')(e\oplus 0_n)R(t')(v\oplus I_n)R^*(t')$$
where $R(t')=\begin{pmatrix}
        \cos(\pi t'/2) & \sin(\pi t'/2)\\
        -\sin(\pi t'/2) & \cos(\pi t'/2)
        \end{pmatrix}$.
It is not difficult to verify $e_{t'}$ is a $(2\varepsilon+4N^4\delta,2N^3,3r)$-homotopy between $e$ and $f$.
\end{proof}

\begin{definition}
Let $X$ be a proper metric space, define
       $$GQP_{\delta,N,r,k}(X)=\{e-f:e,f\in QP_{\delta,N,r,k}(X), \pi(e)=\pi(f)\}$$

We say that $e_1-f_1$ is  $(\delta,N,r)$-equivalent to $e_2-f_2$ if $e_1\oplus f_2\oplus I_n\oplus 0_n$ is $(\delta,N,r)$-equivalent to $f_1\oplus e_2\oplus I_n\oplus 0_n$ for some $n$. This defines an equivalent relation on $GQP_{\delta,N,r,k}$.
\end{definition}

    For any $u\in QU_{\delta,N,r,k}(X)$ with a $(\delta,N,r)$-inverse $v$, let $Z_t(u)$ be a homotopy connecting $I\oplus I$ to $u\oplus v$ obtained by combining the linear homotopy connecting $I\oplus I$ to $uv\oplus I$ with the homotopy $(u\oplus I)R(t)(v\oplus I)R^*(t)$ connecting $uv\oplus I$ to $u\oplus v$, let $Z'_t(u)$ be a homotopy connecting $I\oplus I$ to $v\oplus u$ obtained by combining the linear homotopy connecting $I\oplus I$ to $uv\oplus I$ with the homotopy $R(t)(u\oplus I)R^*(t)(v\oplus I)$ connecting $uv\oplus I$ to $v\oplus u$, where
               $$R(t)=\begin{pmatrix}
                       \cos(\pi t/2) & \sin(\pi t/2)\\
                       -\sin(\pi t/2) &\cos(\pi t/2)
                      \end{pmatrix}.$$
    Let
               $$e_t(u)=Z_t(u)(I\oplus 0)Z'_t(u).$$
We have that
\begin{enumerate}
    \item $||e^2_t(u)-e_t(u)||<8N^6\delta$;
     \item $||e_t(u)||\leq 4N^4$ and $||I-e_t(u)||\leq 5N^4$;
     \item $\mathrm{prop}(e_t(u)(t'))\leq 2r$ for $t'\in [0,1]^k$.
\end{enumerate}
    Then we can define a map $\theta$ from $QU_{\delta,N,r,k}(X)$ to $GQP_{8N^6\delta,5N^4,2r,k+1}(X)$ by:
                     $$\theta(u)=e_t(u)-(I\oplus 0)$$\par
    It is not difficult to see that the definition of $\theta$ does not depend on the choice of $(\delta,N,r)$-inverse $v$ of $u$ in the sense of equivalence.\par

    The following result can be considered as a controlled version of a classical result in operator $K$-theory $K_1(A)\cong K_0(SA)$.
\begin{lemma}\label{Lemma:ControlledSuspension}
    $\theta:QU_{\delta,N,r,k}(X)\rightarrow GQP_{8N^6\delta,5N^4,2r,k+1}(X)$ is an asymptotic isomorphism in the following sense: \par
(1) For any $0<\delta<1/100,r>0,N\geq 1$, there exists $0<\delta_1<\delta, N_1\geq N$ and $0<r_1<r$, such that if two elements $u_1$ and $u_2$ in $QU_{\delta_1,N,r_1,k}(X)$ are $(\delta_1,N,r_1)$-equivalent, then $\theta(u_1)$ and $\theta(u_2)$ are $(\delta,N_1,r)$-equivalent, where $\delta_1$ depends only on $\delta$ and $N$; $N_1$ depends only on $N$ and $r_1$ depends only on $r$. \par
(2) For any $0<\delta<1/100,r>0,N\geq 1$, there exists $0<\delta_2<\delta, N_2\geq N$ and $0<r_2<r$, such that if $u'$ and $u''$ in $QU_{\delta_2,N,r_2,k}(X)$ satisfy $\theta(u')$ is $(\delta_2,N,r_2)$-equivalent to $\theta(u'')$, then $u'\oplus I_m$ is $(\delta,N_2,r)$-equivalent to $u''\oplus I_m$ for some $m\in \mathbb{N}$, where $\delta_2$ depends only on $\delta$ and $N$; $N_2$ depends only on $N$ and $r_2$ depends only on $r,\delta,N$.     \par
(3) For any $0<\delta<1/100,r>0,N\geq 1$, there exists $0<\delta_3<\delta, N_3\geq N$ and $0<r_3<r$, such that for each $e-e_m\in GQP_{\delta_3,N,r_3,k+1}(X)$, there exists $u\in QU_{\delta,N_3,r,k}(X)$ for which $\theta(u)$ is $(\delta,N_3,r)$-equivalent to $e-e_m$, where $\delta_3$ depends only on $\delta$ and $N$; $N_3$ depends only on $N$ and $r_3$ depends only on $r,\delta,N$.
\end{lemma}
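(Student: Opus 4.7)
The map $\theta$ is the controlled analog of the classical suspension isomorphism $K_1(A) \cong K_0(SA)$, so the plan is to mimic the classical proof while carefully tracking the parameters $\delta, N, r$. Classically, one sends an invertible $u$ to the loop $e_t(u) = Z_t(u)(I\oplus 0)Z'_t(u)$ of idempotents based at $I\oplus 0$, and the inverse is a monodromy map extracting an invertible from a loop of idempotents. All three parts will then follow by applying this construction in families and verifying the controlled estimates.

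\textbf{Parts (1) and (3) (functoriality and surjectivity).} For (1), given a $(\delta_1,N,r_1)$-homotopy $w:[0,1]\to QU_{\delta_1,N,r_1,k}(X)$ from $u_1$ to $u_2$, apply Lemma \ref{Lemma:EquivImplyInverseEquiv} to obtain a homotopy $s(t')$ of $(4\delta_1,2N,r_1)$-inverses. Then the pointwise formula $\theta(w(t'))$, built from the pair $(w(t'),s(t'))$, yields a continuous path in $GQP$ between $\theta(u_1)$ and $\theta(u_2)$; the estimates (1)--(3) preceding Lemma \ref{Lemma:ControlledSuspension} immediately deliver the required controlled parameters after a routine choice of $\delta_1,N_1,r_1$. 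For (3), given $e-e_m\in GQP_{\delta_3,N,r_3,k+1}(X)$ with suspension coordinate $t$, for each parameter $\vec t=(t_1,\ldots,t_k)$ the map $t\mapsto e(t,\vec t)$ is a loop of quasi-idempotents based at $e_m$. Using the standard similarity $g(t,\vec t)=e_m e(t,\vec t)+(I-e_m)(I-e(t,\vec t))$, which is invertible whenever $\|e-e_m\|$ is small, we can locally write $e = g\, e_m\, g^{-1}$; the monodromy $u(\vec t)=g(1,\vec t)\,g(0,\vec t)^{-1}$ defines the candidate invertible. To handle the case where the loop is not pointwise close to $e_m$, subdivide $[0,1]$ using uniform continuity of $e$ into finitely many intervals on which the local similarity construction works, and multiply the resulting local similarities; a direct computation then shows $\theta(u)$ is $(\delta,N_3,r)$-equivalent to $e-e_m$.

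\textbf{Part (2) (injectivity): the main obstacle.} This is the hardest part. Given a homotopy $a(s)$ in $GQP_{\delta_2,N,r_2,k+1}(X)$ with $a(0)=\theta(u')$ and $a(1)=\theta(u'')$, the goal is a controlled homotopy in $QU$ between $u'\oplus I_m$ and $u''\oplus I_m$. The idea is to apply the monodromy construction from part (3) to $a(s)$ uniformly in $s$, producing a continuous path of invertibles interpolating between (a stabilization of) $u'$ and $u''$. For this to succeed, one must verify that the subdivision of $[0,1]$ used in part (3) can be chosen uniformly along the homotopy parameter $s$ (via uniform continuity of $a$), and that the monodromy construction applied to $\theta(u)$ returns $u$ itself up to a controlled similarity -- this is essentially an inverse computation using Lemma \ref{Lemma:HomotopyImpliesSimilar} and Lemma \ref{Lemma:SimilarImplyHomotopy} applied to the similarity pair $(Z_t(u),Z'_t(u))$ used to define $e_t(u)$.

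\textbf{Main difficulty.} The principal obstacle will be bookkeeping of parameters through the monodromy construction: each local similarity $g$ has norm bounded by a polynomial in $N$ and propagation bounded by the propagation of $e$, but inverting $g$ and multiplying over the subdivision blows up the norm exponentially in the number of subdivision intervals, which in turn depends on $\delta$ and on the modulus of continuity of $a$. The estimates have to be arranged so that the final $N_2, r_2$ depend only on $N, r, \delta$ as stated (and not on $a$ itself); this is achievable because the number of subdivisions can be bounded uniformly using the Lipschitz upgrade of Lemma \ref{Lemma:HomotopyImpliesLipschitz} together with the fact that $\|a(s)-e_m\|$ is controlled by $N$ on the boundary. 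Once these estimates are in place, the three parts fit together to give the asymptotic isomorphism.
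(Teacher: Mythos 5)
Your overall strategy --- mimicking the classical $K_1(A)\cong K_0(SA)$ isomorphism while tracking $(\delta,N,r)$ --- is the same as the paper's, and you correctly identify Lemmas \ref{Lemma:HomotopyImpliesLipschitz}, \ref{Lemma:EquivImplyInverseEquiv}, \ref{Lemma:HomotopyImpliesSimilar}, \ref{Lemma:SimilarImplyHomotopy} as the controlled tools. Your part (1) is a legitimate and in fact slightly cleaner route than the paper's: instead of building an explicit conjugating path $x_t=Z_t(u_2)(a(t)\oplus b(t))Z'_t(u_1)$ and then invoking Lemma \ref{Lemma:SimilarImplyHomotopy} as the paper does, you simply note that $t'\mapsto e_\cdot(w(t'))$ built from the pair $(w(t'),s(t'))$ is itself a controlled homotopy in $GQP$, with constants roughly $(2048N^6\delta_1,80N^4,2r_1)$, which meets the required dependency pattern. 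Your part (3) is essentially the paper's argument: the subdivision-and-multiply construction you sketch is exactly what the proof of Lemma \ref{Lemma:HomotopyImpliesSimilar} packages via the products $w_i=((2e_{t'_i}-I)(2e_{t'_{i+1}}-I)+I)/2$, and the Lipschitz upgrade from Lemma \ref{Lemma:HomotopyImpliesLipschitz} is precisely what bounds the number of subdivision intervals so that the final parameters depend only on $\delta,N,r$.

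Part (2) is where your route diverges from the paper's and where there is a genuine gap. You propose running the monodromy construction fiberwise in the homotopy parameter $s$ and acknowledge the crux is the inverse computation ``the monodromy of $\theta(u)$ recovers $u$ up to controlled similarity''. That claim is plausible, but you assert rather than prove it, and it is not a direct consequence of applying Lemmas \ref{Lemma:HomotopyImpliesSimilar}--\ref{Lemma:SimilarImplyHomotopy} to $(Z_t(u),Z'_t(u))$ as you suggest; one would still have to compare the subdivision-produced conjugating path $W(t)$ with $Z_t(u)$, argue that $Z'_t(u)W(t)$ is approximately block diagonal along a path from $\approx I$, and conclude that the corner of $W(1)$ is controlled-homotopic to the corner $u$ of $Z_1(u)=u\oplus v$. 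The paper sidesteps this extra lemma entirely: it applies Lemma \ref{Lemma:HomotopyImpliesSimilar} \emph{once} to the homotopy between $\theta(u')$ and $\theta(u'')$ to obtain a single conjugating element $u_t$ with $u_te_t(u'\oplus I)v_t\approx e_t(u''\oplus I)$, then observes that $W_t:=Z'_t(u''\oplus I)u_tZ_t(u'\oplus I)$ approximately commutes with $I\oplus 0$ (so is approximately block diagonal), and reads off the upper-left corner $b_t$. Since $b_0\approx I$ and $b_1\approx(v''\oplus I)(u'\oplus I)$, this $b_t$ is exactly the controlled homotopy witnessing $u'\oplus I_m\sim u''\oplus I_m$, with constants that visibly depend only on $N,\delta,r$. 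As written, your sketch has a hole precisely at the hardest point; you should either carry out the corner extraction as the paper does or supply the missing inverse-computation lemma in full.
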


\begin{proof}
(1) Let $v_i$ be the $(\delta_1,N,r_1)$-inverse of $u_i$ for $i=1,2$, $w(t)$ be the $(\delta_1,N,r_1)$-homotopy between $u_1$ and $u_2$. By Lemma \ref{Lemma:EquivImplyInverseEquiv}, there exists a $(4\delta_1,2N,r_1)$-homotopy $s(t)$ connecting $v_1$ and $v_2$ such that $||I-s(t)w(t)||$ and $||I-w(t)s(t)||$ are less than $4\delta_1$. Let $a(t)$ be a homotopy connecting $I$ to $v_2u_1$ obtained by combining the linear homotopy connecting $I$ to $v_1u_1$ with the homotopy $s(t)u_1$; let $a'(t)$ be a homotopy connecting $I$ to $v_1u_2$ obtained by combining the linear homotopy connecting $I$ to $v_1u_1$ with the homotopy $v_1w(t)$; let $b(t)$ be a homotopy connecting $I$ to $u_2v_1$ obtained by combining the linear homotopy connecting $I$ to $u_1v_1$ with the homotopy $w(t)v_1$; let $b'(t)$ be a homotopy connecting $I$ to $u_1v_2$ obtained by combining the linear homotopy connecting $I$ to $u_1v_1$ with the homotopy $u_1s(t)$. Define
                   $$x_t=Z_t(u_2)(a(t)\oplus b(t))Z'_t(u_1)$$
                   $$x'_t=Z_t(u_1)(a'(t)\oplus b'(t))Z'_t(u_2)$$
 We have that
 \begin{enumerate}[(i)]
  \item $\max\{||x_t||,||x'_t||\}\leq 8N^6$;
  \item $\max\{||I-x_t x'_t||,||I-x'_t x_t||\} < 64N^{10}\delta_1$;
  \item $\max\{\mathrm{prop}(x_t),\mathrm{prop}(x'_t)\}<6r_1$;
  \item  $\max\{||x_i-I||,||x'_i-I||<3\delta_1\}$ for $i=0,1$.
 \end{enumerate}
 Thus $x_t,x'_t\in QU_{64N^{10}\delta_1,8N^6,6r_1,k+1}(X)$. And
                   $$||x_te_t(u_1)x'_t-e_t(u_2)||<(184N^{14})\delta_1$$
By Lemma \ref{Lemma:SimilarImplyHomotopy}, we can select appropriate $\delta_1,N_1$ and $r_1$ satisfying (1). \par
(2) Let $v',v''$ be $(\delta_2,N,r_2)$-inverses of $u',u''$ respectively. By Lemma \ref{Lemma:HomotopyImpliesSimilar}, there exists an element $u$ in $QU_{\delta_2,C_1(N),C_2(N,\delta_2)r_2,k+1}(X)$ with inverse $v$, such that
                          $$||ue_t(u'\oplus I)v-e_t(u''\oplus I)||<C_3(N)\delta_2$$
i.e.
 $$||u_tZ_t(u'\oplus I)(I\oplus 0)Z'_t(u'\oplus I)v_t-Z_t(u''\oplus I)(I\oplus 0)Z'_t(u''\oplus I)||<C_3(N)\delta_2 \eqno(A)$$
where $t\in[0,1]$. Thus we have
 $$||Z'_t(u''\oplus I)u_tZ_t(u'\oplus I)(I\oplus 0)-(I\oplus 0)Z'_t(u''\oplus I)u_tZ_t(u'\oplus I)||<C_4(N)\delta_2 \eqno(B)$$
Let
$$Z'_t(u''\oplus I)u_tZ_t(u'\oplus I)=\begin{pmatrix}
                                        b_t & g_t\\
                                        h_t & d_t
                                        \end{pmatrix}.$$
Then by $(B)$, we obtain
                      $$||g_t||<C_4(N)\delta_2, ||h_t||<C_4(N)\delta_2. \eqno(*)$$
By $(A)$, we also have
$$||(I\oplus 0)Z'_t(u'\oplus I)v_tZ_t(u''\oplus I)-Z'_t(u'\oplus I)v_tZ_t(u''\oplus I)(I\oplus 0)||<C_5(N)\delta_2\eqno(C)$$
Let
$$Z'_t(u'\oplus I)v_tZ_t(u''\oplus I)=\begin{pmatrix}
                                        b'_t & g'_t\\
                                        h'_t & d'_t
                                        \end{pmatrix}.$$
Then by $(C)$, we obtain
                      $$||g'_t||<C_5(N)\delta_2, ||h'_t||<C_5(N)\delta_2. \eqno(**)$$
Thus by $(*)$ and $(**)$, we know that $b_t\in QU_{C_6(N)\delta_2,C_7(N),C_8(N,\delta_2)r_2,k+1}(X)$ with a $(C_6(N)\delta_2,C_7(N),C_8(N,\delta_2)r_2)$-inverse $b'_t$ such that
                                $$||c_0-I||\leq ||u_0-I||<\delta_2$$
                                $$||c_1-(v''\oplus I)(u'\oplus I)||<C_9(N)\delta_2.$$
Thus we can select appropriate $\delta_2,N_2$ and $r_2$ satisfying (2). \par
(3) $e(t)$ can be considered as a homotopy in $QP_{\delta_3,N,r_3,k}(X)$, where $t\in [0,1]$. We can assume $e(0)=e(1)=e_m=I\oplus 0$. By the proof of Lemma \ref{Lemma:HomotopyImpliesSimilar}, there exists a homotopy $w(t)$ in $QU_{\delta_3,C_1(N),C_2(N,\delta_3)r_3,k}(X)$ with inverse $s(t)$ for which $w(0)=s(0)=I$ such that
                          $$||w(t)(I\oplus 0\oplus I_m\oplus 0_m)s(t)-e(t)\oplus I_m\oplus 0_m||<C_3(N)\delta_3$$
for some $m\in \mathbb{N}$ and all $t\in [0,1]$. By some minor modifications of $w(t)$ and $s(t)$, we have
                         $$||w(1)(I\oplus 0)-(I\oplus 0)w(1)||<C_4(N)\delta_3.  \eqno(A)$$
Let
                               $$w(1)=\begin{pmatrix}
                                        u & g\\
                                        h & u'
                                        \end{pmatrix},
                                 s(1)=\begin{pmatrix}
                                        v & g'\\
                                        h' & v'
                                        \end{pmatrix}.
                                        $$
then by $(A)$, we obtain
                         $$\max\{||g||,||h||,||g'||,||h'||\}<C_4(N)\delta_3.$$
Thus $u$ and $u'$ are two elements in $QU_{C_5(N)\delta_3,C_6(N),C_7(N,\delta_3)r_3,k}(X)$ with inverse $v$ and $v'$ respectively. \par
Let $a_t$ be a homotopy connecting $I\oplus I\oplus I$ to $v'v\oplus I\oplus I$ obtained by combining the linear homotopy connecting $I\oplus I\oplus I$ to $v'u'\oplus I\oplus I$ with the rotation homotopy connecting $(v'\oplus I\oplus I)(u'\oplus I\oplus I)$ to $(v'\oplus I\oplus I)(v\oplus u\oplus u')$ with the homotopy $(v'\oplus I\oplus I)(v\oplus w(1-t))$ connecting $(v'\oplus I\oplus I)(v\oplus u\oplus u')$ to $v'v\oplus I\oplus I$. Similarly, let $b_t$ be a homotopy connecting $I\oplus I\oplus I$ to $uu'\oplus I\oplus I$. Define
     $$y_t=(w(t)\oplus I\oplus I)(I\oplus a(t))(Z'_t(u)\oplus I\oplus I)$$
     $$y'_t=(Z_t(u)\oplus I\oplus I)(I\oplus b(t))(s(t)\oplus I\oplus I)$$
then we have
    $$y_0=y'_0=I, \max\{||y_i-I||,||y'_i-I||\}<C_8(N)\delta_3 \eqno(A)$$
and
    $$||y_t(e_t(u)\oplus 0)y'_t-(e\oplus 0)||<C_9(N)\delta_3 \eqno(B)$$
Now by Lemma \ref{Lemma:SimilarImplyHomotopy}, we can choose appropriate $\delta_3,N_3,r_3$ on the basis of $(A)$ and $(B)$ satisfying (3). \par
Remark: we can also let
    $$y_t=(w(t)\oplus I\oplus I)(I\oplus Z'_t(u')s(t)\oplus I)(Z'_t(u)\oplus I\oplus I)$$
    $$y'_t=(Z_t(u)\oplus I\oplus I)(I\oplus w(t)Z_t(u')\oplus I)(s(t)\oplus I\oplus I)$$
\end{proof}

\subsection{Strongly Lipschitz homotopy invariance}
\begin{definition}[Yu \cite{YuLocalization}]
Let $f,g:X\rightarrow Y$ be two proper Lipschitz maps, a continuous homotopy $F(t,x)(t\in[0,1])$ between $f$ and $g$ is called \textit{strongly Lipschitz} if:
\begin{enumerate}
\item $F(t,x)$ is a proper map from $X$ to $Y$ for each $t$;
\item there exists a constant $C$, such that $d(F(t,x),F(t,y))\leq Cd(x,y)$ for all $x,y\in X$ and $t\in [0,1]$, this $C$ is called Lipschitz constant of $F$;
 \item $F$ is equicontinuous in $t$, i.e. for any $\varepsilon>0$, there exists $\delta>0$ such that $d(F(t_1,x),F(t_2,x))<\varepsilon$ for all $x\in X$ if $|t_1-t_2|<\delta$;
\item  $F(0,x)=f(x),F(1,x)=g(x)$ for all $x\in X$.
\end{enumerate}
\end{definition}

    $X$ is said to be \textit{strongly Lipschitz homotopy equivalent} to $Y$, if there exist proper Lipschitz maps $f:X\rightarrow Y$ and $g:Y\rightarrow X$ such that $fg$ and $gf$ are strongly Lipschitz homotopic to $\mathrm{id}_Y$ and $\mathrm{id}_X$, respectively.

\begin{lemma}\label{Lemma:Homotopy}
Let $f$ and $g$ be two Lipschitz maps from $X$ to $Y$. Let $F(t,x)$ be a strongly Lipschitz homotopy connecting $f$ to $g$ with Lipschitz constant $C$.  There exists $C_0>0$, such that for any $u\in QU_{\delta,N,r,k}(X)$, there exists a homotopy $w(t')$ in $QU_{D(N)\delta,N^{100},C_0r,k}(Y)$ for which
                       $$w(0)=\mathrm{Ad}((V_f,V^+_f))(u)\oplus I$$
                       $$w(1)=\mathrm{Ad}((V_g,V^+_g))(u)\oplus I$$
where $D(N)$ depends only on $N$ and $C_0$ depends only on $C$.
\end{lemma}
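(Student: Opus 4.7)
The plan is to telescope the homotopy $F$ through finitely many Lipschitz time slices and interpolate between consecutive pushforwards by the rotation trick already used in Lemma \ref{Lemma:IsometryPairHom} and Lemma \ref{Lemma:CoveringIsometryPair}. First I would fix a small auxiliary $\varepsilon>0$ (depending only on $\delta$ and $N$), and use the equicontinuity of $F$ in $t$ to choose a partition $0=t_0<t_1<\cdots<t_n=1$ with $d(F(t_{i+1},x),F(t_i,x))<\varepsilon$ uniformly in $x\in X$. Each $f_i:=F(t_i,\cdot)$ is a proper Lipschitz map with Lipschitz constant $C$, so Lemma \ref{Lemma:CoveringIsometry} supplies a covering pair $(V_i,V_i^+)$ whose supports lie within $\varepsilon$ of the graph of $f_i$.

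Between consecutive indices $i$ and $i+1$, set
\[
  W_i(s)=R(s)\,(V_i\oplus V_{i+1})\,R^{*}(s),\qquad W_i^{+}(s)=R(s)\,(V_i^{+}\oplus V_{i+1}^{+})\,R^{*}(s),
\]
where $R(s)=\begin{pmatrix}\cos(\pi s/2)&\sin(\pi s/2)\\-\sin(\pi s/2)&\cos(\pi s/2)\end{pmatrix}$. A direct computation using $V_i^{+}V_i=V_{i+1}^{+}V_{i+1}=I$ gives $W_i^{+}(s)W_i(s)=I$, so the $s$-family
\[
  w_i(s):=W_i(s)\,(u\oplus I)\,W_i^{+}(s)+\bigl(I-W_i(s)W_i^{+}(s)\bigr)
\]
defines a continuous element of the unitized algebra, interpolating from $\mathrm{Ad}((V_i,V_i^+))(u)\oplus I$ at $s=0$ to $\mathrm{Ad}((V_{i+1},V_{i+1}^+))(u)\oplus I$ at $s=1$, up to an inner block permutation (realised by the same rotation device) that costs no propagation. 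Concatenating the $n$ pieces and then invoking Lemma \ref{Lemma:CoveringIsometryPair} at the two endpoints to pass from the chosen pairs $(V_0,V_0^+)$, $(V_n,V_n^+)$ to the prescribed pairs $(V_f,V_f^+)$, $(V_g,V_g^+)$ yields the desired homotopy $w(t')$.

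The main obstacle is a bookkeeping verification of the three defining conditions for $QU_{D(N)\delta,N^{100},C_0r,k}(Y)$ along every piece. For the propagation bound, a support chase (Remark \ref{Remark:support}), combined with the Lipschitz estimate $d(F(t_i,x),F(t_i,y))\le Cd(x,y)$ and the $\varepsilon$-closeness of the covering isometries, gives $\mathrm{prop}(w_i(s))\le Cr+O(\varepsilon)$, so $C_0$ depends only on $C$. For quasi-invertibility, taking $v$ a $(\delta,N,r)$-inverse of $u$, the natural candidate
\[
  w_i'(s):=W_i(s)\,(v\oplus I)\,W_i^{+}(s)+\bigl(I-W_i(s)W_i^{+}(s)\bigr)
\]
satisfies $w_i(s)w_i'(s)-I=W_i(s)((uv-I)\oplus 0)W_i^{+}(s)$ once the two $W_iW_i^{+}$-projection terms cancel, and its norm is bounded by a polynomial in $N$ times $\delta$; the other side is analogous. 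The boundary condition $\|u(t')-I\|<\delta$ on $\mathrm{bd}([0,1]^k)$ transfers to $w_i(s)(t')$ with at most a polynomial-in-$N$ loss. The generous exponent $N^{100}$ comfortably absorbs the norm inflation from the finitely many matrix manipulations, including the intermediate block permutations and the endpoint adjustments via Lemma \ref{Lemma:CoveringIsometryPair}. The number $n$ of partition pieces, which depends on $\varepsilon$ (and hence on $\delta$, $N$, and the equicontinuity modulus of $F$), does not enter the final constants, since each piece is handled independently and the constants only compound with the $N$-weighted per-piece factor.
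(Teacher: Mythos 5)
Your finite-partition telescoping is attractive, but it misses a structural obstruction coming from the localization algebra, and the paper's argument is forced by that obstruction to use an infinite (Eilenberg-swindle) construction rather than a finite concatenation.

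Recall that an element $u\in QU_{\delta,N,r,k}(X)$ is a function of $t'\in[0,1]^k$ into $B^p_{L,0}(X)^+\otimes M_n(\mathbb{C})$, and that each $u(t')$ is itself a function of a localization parameter $\tau\in[0,\infty)$ whose propagation tends to $0$ as $\tau\to\infty$. When you conjugate by a \emph{fixed} covering pair $(V_i,V_i^+)$ (from Lemma \ref{Lemma:CoveringIsometry}, with fixed error $\varepsilon$), the resulting operator $V_i\,u(t')(\tau)\,V_i^+$ has propagation bounded only by $C\cdot\mathrm{prop}(u(t')(\tau))+2\varepsilon$, and that $2\varepsilon$ term does \emph{not} shrink as $\tau\to\infty$. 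So $\mathrm{Ad}((V_i,V_i^+))(u)$ fails the defining condition of $B^p_{L,0}(Y)$ and never lands in $QU_{\cdot,\cdot,\cdot,k}(Y)$. This is precisely why the paper never uses the static pair $(V_i,V_i^+)$ inside the localization algebra, but always the $\tau$-dependent families $V_i(\tau)$ of Lemma \ref{Lemma:IsometryPairHom}, with errors $\varepsilon_j\to 0$.

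Even after you repair this by using $\tau$-dependent covering isometries, a fixed finite partition $0=t_0<\cdots<t_n=1$ still fails: a rotation between $\mathrm{Ad}((V_{f_i},V_{f_i}^+))(u)$ and $\mathrm{Ad}((V_{f_{i+1}},V_{f_{i+1}}^+))(u)$ costs propagation of order $\sup_x d(f_i(x),f_{i+1}(x))$, which is the fixed partition scale and again does not vanish as $\tau\to\infty$. To keep propagation decaying in $\tau$, the partition of $[0,1]$ in the $F$-direction must itself become finer as $\tau$ increases; this is exactly the role of the doubly-indexed family $\{t_{i,j}\}$ in the paper's proof, where $j$ tracks the localization time and $N_j\to\infty$ so the number of interpolation steps grows with $\tau$. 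An unboundedly growing number of steps cannot be realized by a finite concatenation in the homotopy variable $s$; the paper resolves this with an Eilenberg swindle: it forms the infinite direct sums $a=\bigoplus_i(w_i\oplus I)$, $b=\bigoplus_i(w_{i+1}\oplus I)$, $c=(I\oplus I)\bigoplus_{i\ge 1}(w_i\oplus I)$ and constructs homotopies $s_1,s_2$ (using shifts in the infinite sum and rotations) whose net effect cancels all but the $i=0$ term, yielding a homotopy between $\mathrm{Ad}((V_f,V^+_f))(u)\oplus\bigoplus I$ and $\mathrm{Ad}((V_g,V^+_g))(u)\oplus\bigoplus I$. Your remark that ``the number $n$ of partition pieces does not enter the final constants'' is beside the point: the obstacle is not accumulation of constants but the incompatibility of any fixed finite partition with the $\tau\to\infty$ propagation requirement. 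You need the swindle (or some equivalent device) to make the argument go through.
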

\begin{proof}
Choose $\{t_{i,j}\}_{i\geq 0,j\geq 0} \subseteq [0,1]$ satisfying \par
(1) $t_{0,j}=0, t_{i,j+1}\leq t_{i,j}, t_{i+1,j}\geq t_{i,j}$;\par
(2) there exists $N_j\rightarrow \infty$ such that $t_{i,j}=1$ for all $i\geq N_j$ and $N_{j+1}\geq N_j$; \par
(3) $d(F(t_{i+1,j},x),F(t_{i,j},x))<\varepsilon_j=r/(j+1), d(F(t_{i,j+1},x),F(t_{i,j},x))<\varepsilon_j$ for all $x\in X$.\par
Let $f_{i,j}(x)=F(t_{i,j},x)$. By Lemma \ref{Lemma:CoveringIsometry}, there exist an isometric operator $V_{f_{i,j}}:E_X^p\rightarrow E_Y^p$ and a contractive operator $V^+_{f_{i,j}}:E_Y^p\rightarrow E_X^p$ with $V^+_{f_{i,j}}V_{f_{i,j}}=I$ such that
  \begin{center}
  $\mathrm{supp}(V_{f_{i,j}}) \subseteq \{(x,y)\in X\times Y:d(f_{i,j}(x),y)<r/(1+i+j)\}$;\\
  $\mathrm{supp}(V^+_{f_{i,j}}) \subseteq\{(y,x)\in Y\times X:d(f_{i,j}(x),y)<r/(1+i+j)\}$.
  \end{center}
For each $i>0$, define a family of operators $V_i(t)(t\in [0,\infty))$ from $E^p_X\oplus E^p_X$ to $E^p_Y\oplus E^p_Y$ and a family of operators $V^+_i(t)(t\in [0,\infty))$ from $E^p_Y\oplus E^p_Y$ to $E^p_X\oplus E^p_X$ by
    $$V_i(t)=R(t-j)(V_{f_{i,j}}\oplus V_{f_{i,j+1}})R^*(t-j), t\in[j,j+1]$$
    $$V^+_i(t)=R(t-j)(V^+_{f_{i,j}}\oplus V^+_{f_{i,j+1}})R^*(t-j), t\in[j,j+1]$$
where
    $$R(t)=\begin{pmatrix}
           \cos(\pi t/2)  & \sin(\pi t/2)\\
           -\sin(\pi t/2) & \cos(\pi t/2)
           \end{pmatrix}.$$
Consider:
    $$u_0(t)=\mathrm{Ad}((V_f,V^+_f))(u)=V_f(t)(u(t)\oplus I)V^+_f(t)+(I-V_f(t)V^+_f(t));$$
    $$u_{\infty}(t)=\mathrm{Ad}((V_g,V^+_g))(u)=V_g(t)(u(t)\oplus I)V^+_g(t)+(I-V_g(t)V^+_g(t));$$
    $$u_i(t)=\mathrm{Ad}((V_i,V^+_i))(u)=V_i(t)(u(t)\oplus I)V^+_i(t)+(I-V_i(t)V^+_i(t)).$$
Let $v$ be the $(\delta,N,r)$-inverse of $u$. Similarly, we can define
                           $$u'_i(t)=\mathrm{Ad}((V_i,V^+_i))(v).$$
    For each $i$, define $n_i$ by
                       $$n_i=\left\{
                               \begin{array}{lr}
                               \max\{j:i\geq N_j\}, &  \{j:i\geq N_j\}\not= \emptyset;\\
                               0,                  &    \{j:i\geq N_j\}= \emptyset.
                               \end{array}
                               \right. $$
We can choose $V_{f_{i,j}}$ in such a way that: $u_i(t)=u_{\infty}$ where $t\leq n_i$. \par
    Define
          $$w_i(t)=\left\{
                     \begin{array}{ll}
                     u_i(t)(u'_{\infty}(t)),                 &  t\geq n_i;\\
                     (n_i-t)I+(t-n_i+1)u_i(t)u'_{\infty}(t), &  n_i-1\leq t \leq n_i;\\
                     I,                                       &  0\leq t \leq n_i-1.
                     \end{array}
                     \right. $$
Consider:
    \begin{align*}
    a&=\bigoplus^{\infty}_{i=0}(w_i\oplus I);\\
    b&=\bigoplus^{\infty}_{i=0}(w_{i+1}\oplus I);\\
    c&=(I\oplus I)\bigoplus^{\infty}_{i=1}(w_i\oplus I).
    \end{align*}
By the construction of $\{t_{i,j}\}$, we know that $a,b,c\in QU_{D_1(N)\delta,N^2,C_1r,k}(Y)$ for some constant $C_1$ depending only on $C$.
Let
    $$V_{i,i+1}(t')=R(t')(V_i\oplus V_{i+1})R^*(t'),t'\in [0,1];$$
    $$V^+_{i,i+1}(t')=R(t')(V^+_i\oplus V^+_{i+1})R^*(t'), t'\in [0,1].$$
Define
    $$u_{i,i+1}(t')=V_{i,i+1}(t')((u\oplus I)\oplus I)V^+_{i,i+1}(t')+(I-V_{i,i+1}(t')V^+_{i,i+1}(t')),$$
then
\begin{align*}
    u_{i,i+1}(0)&=(V_i(u\oplus I)V^+_i+(I-V_iV^+_i))\oplus I;\\
    u_{i,i+1}(1)&=(V_{i+1}(u\oplus I)V^+_{i+1}+(I-V_{i+1}V^+_{i+1}))\oplus I
\end{align*}
Using $u_{i,i+1}(t')$, we can construct a homotopy $s_1(t')$ in $QU_{D_2(N)\delta,N^{100},C_2r,k}(Y)$ for some $C_2\geq C_1$ depending only on $C$, such that
            $$s_1(0)=a, s_1(1)=b.$$ \par
    We can also construct a homotopy $s_2(t')$ in $QU_{D_3(N)\delta,N^{100},C_3r,k}(Y)$ for some $C_3\geq C_1$ depending only on $C$, such that
            $$s_2(0)=b\oplus I, s_2(1)=c\oplus I.$$ \par
    Finally, we define $w(t')$ to be the homotopy obtained by combining the following homotopies:\par
    (1) the linear homotopy between $(u_0\oplus I)\bigoplus^{\infty}_{i=1}(I\oplus I)$ and $c'a((u_{\infty}\oplus I)\bigoplus^{\infty}_{i=1}(I\oplus I))$;\par
    (2) $s'_2(1-t')a((u_{\infty}\oplus I)\bigoplus^{\infty}_{i=1}(I\oplus I)); $\par
    (3) $s'_1(1-t')a((u_{\infty}\oplus I)\bigoplus^{\infty}_{i=1}(I\oplus I)); $\par
    (4) the linear homotopy between $a'a((u_{\infty}\oplus I)\bigoplus^{\infty}_{i=1}(I\oplus I))$ and $(u_{\infty}\oplus I)\bigoplus^{\infty}_{i=1}(I\oplus I)$,\\
where $a',b',c',s'_1,s'_2$ are the $(D(N)\delta,N^{100},C_0r)$-inverses of $a,b,c,s_1,s_2$ respectively in $QU_{D_4(N)\delta,N^{100},C_4r,k}(Y)$ for some $C_4\geq \max\{C_1,C_2,C_3\}$ depends only on $C$.\par
    Therefore, $w(t')$ is the homotopy connecting $\mathrm{Ad}((V_f,V^+_f))(u)\bigoplus^{\infty}_{i=1}I$ to $\mathrm{Ad}((V_g,V^+_g))(u)\bigoplus^{\infty}_{i=1}I$.
\end{proof}

    By Lemma \ref{Lemma:ControlledSuspension}, we have the following result:
\begin{lemma}
Let $X,Y,f$ and $g$ be as in Lemma \ref{Lemma:Homotopy}. For any $0<\delta<1/100, N\geq 1,r\geq 0$, there exist $0<\delta_1<\delta, N_1\geq N, 0\leq r_1<r$ such that for any $e\in QP_{\delta_1,N,r_1,k}(X)(k>1)$, there exists a homotopy $e(t')(t'\in[0,1])$ in $QP_{\delta,N_1,r,k}(Y)$ satisfying
             $$e(0)=\mathrm{Ad}((V_f,V^+_f))(e\oplus 0)\oplus (I\oplus 0)$$
             $$e(1)=\mathrm{Ad}((V_g,V^+_g))(e\oplus 0)\oplus (I\oplus 0)$$
where $\delta_1$ depends only on $\delta$ and $N$; $N_1$ depends only on $N$ and $r_1$ depends only on $r,\delta,N,C$.
\end{lemma}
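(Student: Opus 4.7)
The plan is to reduce the $QP$ statement to the already-proved $QU$ statement (Lemma \ref{Lemma:Homotopy}) by passing through the controlled suspension isomorphism $\theta$ of Lemma \ref{Lemma:ControlledSuspension}. The hypothesis $k>1$ is exactly what allows this reduction, since $\theta$ exchanges $QU$ in dimension $k-1$ with $GQP$ in dimension $k$, so the extra cube-variable needed by $\theta$ is available.

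The first step is to represent $e$ via $\theta$. Writing $e_m=\pi(e)$, the difference $e-e_m$ lies in $GQP_{\delta_1,N,r_1,k}(X)$. After shrinking $\delta_1,r_1$ and enlarging $N$ to some $N'$ in the manner prescribed by Lemma \ref{Lemma:ControlledSuspension}(3), there exists $u\in QU_{\delta_1',N',r_1',k-1}(X)$ such that $\theta(u)$ is $(\delta_1',N',r_1')$-equivalent to $e-e_m$. Then I would feed $u$ into Lemma \ref{Lemma:Homotopy}, producing a homotopy $w(t')$ in $QU_{D(N')\delta_1',(N')^{100},C_0r_1',k-1}(Y)$ with
\[
w(0)=\mathrm{Ad}((V_f,V_f^+))(u)\oplus I, \qquad w(1)=\mathrm{Ad}((V_g,V_g^+))(u)\oplus I.
\]
Applying $\theta$ pointwise in $t'$ gives a continuous path $\theta(w(t'))$ in $GQP$ of the level required by Lemma \ref{Lemma:ControlledSuspension}; this is the candidate $QP$-homotopy after moving from $GQP$ back to $QP$ by the standard device of adding the swap-diagonal stabilization (this is what produces the outer $\oplus(I\oplus 0)$ in the statement).

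The key compatibility step is that $\theta$ essentially commutes with $\mathrm{Ad}((V_f,V_f^+))$ up to controlled Lipschitz equivalence. Since $\mathrm{Ad}((V_f,V_f^+))$ is an (asymptotic) homomorphism, and $\theta$ is defined by conjugation with the explicit matrices $Z_t(u)$, $Z_t'(u)$ built from $u$, its quasi-inverse, and the rotation $R(t)$, the homomorphism pushes these constructions forward: $\theta(\mathrm{Ad}((V_f,V_f^+))(u))$ is $(\delta,N_1,r)$-equivalent to $\mathrm{Ad}((V_f,V_f^+))(\theta(u))$, and the same for $g$. Combining this with the equivalence $\theta(u)\sim e-e_m$ from Lemma \ref{Lemma:ControlledSuspension}(3), the endpoints of $\theta(w(t'))$ become equivalent (after a harmless stabilization) to $\mathrm{Ad}((V_f,V_f^+))(e\oplus 0)\oplus(I\oplus 0)$ and $\mathrm{Ad}((V_g,V_g^+))(e\oplus 0)\oplus(I\oplus 0)$. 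Splicing the resulting $(\delta,N_1,r)$-equivalences at the two ends onto $\theta(w(t'))$ yields the desired homotopy in $QP_{\delta,N_1,r,k}(Y)$.

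The main obstacle I anticipate is not any single step but the parameter bookkeeping: each application of Lemma \ref{Lemma:ControlledSuspension}, Lemma \ref{Lemma:SimilarImplyHomotopy}, and the $\theta$/$\mathrm{Ad}$ compatibility inflates $\delta$ by polynomial factors in $N$, multiplies $N$ by constants depending only on $N$, and multiplies $r$ by constants depending on $C$, $\delta$, $N$. The genuinely technical point is making precise the asymptotic naturality of $\theta$ under $\mathrm{Ad}((V_f,V_f^+))$: one must check that replacing $u$ by $\mathrm{Ad}((V_f,V_f^+))(u)\oplus I$ inside $e_t(\cdot)$ produces the same element as first forming $e_t(u)$ and then applying $\mathrm{Ad}((V_f,V_f^+))\oplus\mathrm{id}$, up to a $(\delta,N_1,r)$-Lipschitz homotopy. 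This follows because the $\mathrm{Ad}$ map is multiplicative and the rotation $R(t)$ is a scalar matrix commuting with it, but needs to be written down carefully to extract the explicit constants asserted in the statement.
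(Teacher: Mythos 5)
Your reduction through the controlled suspension $\theta$ of Lemma \ref{Lemma:ControlledSuspension} to the $QU$ case of Lemma \ref{Lemma:Homotopy} is exactly what the paper means by its one-line remark preceding this statement, so you have reconstructed the intended argument. One simplification you underappreciate: the unitized $\mathrm{Ad}((V_f,V_f^+))$ used in Lemma \ref{Lemma:Homotopy} is an exact multiplicative unital map (because $V_f^+V_f=I$), not merely an asymptotic homomorphism, so it commutes on the nose with the products, quasi-inverses, and scalar rotations defining $e_t(\cdot)$, which makes the $\theta$/$\mathrm{Ad}$ compatibility step strictly natural rather than only up to controlled homotopy.
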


\subsection{Controlled cutting and pasting}\label{section:cutpaste}
\begin{definition}[Yu \cite{YuFAD}]\label{Def:ControlledBoundary}
Let $X$ be a proper metric space, $X_1$ and $X_2$ be two subspaces. The triple $(X;X_1,X_2)$ is said to satisfy the \textit{strong excision} condition if : \par
(1) $X=X_1\cup X_2$, $X_i$ is a Borel subset and $\mathrm{int}(X_i)$ is dense in $X_i$ for $i=1,2$;\par
(2) there exists $r_0>0,C_0>0$ such that (i) for any $r'\leq r_0$, $\mathrm{bd}_{r'}(X_1)\cap \mathrm{bd}_{r'}(X_2)=\mathrm{bd}_{r'}(X_1\cap X_2)$; (ii) for each $X'=X_1,X_2,X_1\cap X_2$ and any $r'\leq r_0$, $\mathrm{bd}_{r'}(X')$ is strongly Lipschitz homotopy equivalent to $X'$ with $C_0$ as the Lipschitz constant.
\end{definition}

    Let the triple $(X;X_1,X_2)$ be as above. Let $0<\delta<1/100$. For any $u\in QU_{\delta,N,r,k}(X)$ with $(\delta,N,r)$-inverse $v$, we take $uX_1=\chi_{X_1}u\chi_{X_1}$, the same for $vX_1$. Define
                            $$w_u=\begin{pmatrix}
                                    I & uX_1\\
                                    0 & I
                                  \end{pmatrix}
                                  \begin{pmatrix}
                                    I     & 0\\
                                    -vX_1 & I
                                  \end{pmatrix}
                                  \begin{pmatrix}
                                    I & uX_1\\
                                    0 & I
                                  \end{pmatrix}
                                  \begin{pmatrix}
                                    0 & -I\\
                                    I & 0
                                   \end{pmatrix}
                            $$
then
                            $$w^{-1}_u=\begin{pmatrix}
                                    0  & I\\
                                    -I & 0
                                  \end{pmatrix}
                                  \begin{pmatrix}
                                    I & -uX_1\\
                                    0 & I
                                  \end{pmatrix}
                                  \begin{pmatrix}
                                    I    & 0\\
                                    vX_1 & I
                                  \end{pmatrix}
                                  \begin{pmatrix}
                                    I & -uX_1\\
                                    0 & I
                                   \end{pmatrix}.
                            $$\par
    We define a homomorphism $\partial_0: QU_{\delta,N,r,k}(X)\rightarrow QP_{4N^4\delta,2N^6,6r,k}(\mathrm{bd}_{5r}(X_1) \cap \mathrm{bd}_{5r}(X_2))$ by
     $$\partial_0(u)=\chi_{\mathrm{bd}_{5r}(X_1) \cap \mathrm{bd}_{5r}(X_2)} w_u(I\oplus 0)w^{-1}_u \chi_{\mathrm{bd}_{5r}(X_1) \cap \mathrm{bd}_{5r}(X_2)}$$ \par
    Now we verify $\partial_0(u)\in QP_{4N^4\delta,2N^6,6r,k}(\mathrm{bd}_{5r}(X_1) \cap \mathrm{bd}_{5r}(X_2))$. Firstly, $||\partial_0(u)||$ and $||1-\partial_0(u)||$ are less than $2N^6$. Secondly, $\mathrm{prop}(\partial_0(u))<6r$. Finally, we estimate $||(\partial_0(u))^2-\partial_0(u)||$. For convenience, we take $Y=\mathrm{bd}_{5r}(X_1) \cap \mathrm{bd}_{5r}(X_2)$:
     $$||(\partial_0(u))^2-\partial_0(u)||=||\chi_Y w_u(I\oplus 0)w^{-1}_u\chi_{X_1-Y} w_u(I\oplus 0)w^{-1}_u\chi_Y||.$$
We now estimate $||\chi_Y w_u(I\oplus 0)w^{-1}_u\chi_{X_1-Y}||$. We have $\chi_{X_1}u\chi_{X_1-Y}=u\chi_{X_1-Y}$. Thus we can replace $uX_1$ by $u$ in $w_u(I\oplus 0)w^{-1}_u$. Then
           $$w_u(I\oplus 0)w^{-1}_u=\begin{pmatrix}
                                      (I-uv)uv+uv  &  (I-uv)u(I-vu)+u(I-vu)\\
                                      (I-vu)v      &  (I-vu)^2,
                                     \end{pmatrix}.
           $$
Thus
$$||\chi_Y w_u(I\oplus 0)w^{-1}_u\chi_{X_1-Y}||=||\chi_{Y\cap X_1}((w_u(I\oplus 0)w^{-1}_u)-(I\oplus 0))\chi_{X_1-Y}||<2N^2\delta.$$
Similarly,
$$||\chi_{X_1-Y} w_u(I\oplus 0)w^{-1}_u \chi_Y||<2N^2\delta.$$\par
    Assume that $r<r_0/5$, where $r_0$ is as in Definition \ref{Def:ControlledBoundary}. Let $f$ be the proper Lipschitz map from $\mathrm{bd}_{5r}(X_1) \cap \mathrm{bd}_{5r}(X_2)$ to $X_1\cap X_2$ realizing the strong Lipschitz homotopy equivalence in Definition \ref{Def:ControlledBoundary}. By Lemma \ref{Lemma:IsometryPairHom}, we have the pair $(V_f,V^+_f)$ corresponding to $\{\varepsilon_m\}$ for which $\sup_m(\varepsilon_m)<r/10$.\par
    We define the boundary map $\partial:QU_{\delta,N,r,k}(X)\rightarrow GQP_{4N^4\delta,2N^6,6C_0r,k}(X_1 \cap X_2)$ by
                     $$\partial(u)=\mathrm{Ad}((V_f,V^+_f))(\partial_0(u))-(I\oplus 0)$$ \par
    Then we consider the following sequence:
        $$QU_{\delta,N,r,k}(X_1) \oplus QU_{\delta,N,r,k}(X_2) \xrightarrow{j} QU_{\delta,N,r,k}(X) \xrightarrow{\partial} GQP_{4N^4\delta,2N^6,6C_0r,k}(X_1 \cap X_2)$$
where $j(u_1\oplus u_2)=(u_1+\chi_{X-X_1}) \oplus (u_2+\chi_{X-X_2}), r<r_0/5 $.

\begin{lemma}\label{Lemma:ControlledExact}
Let $(X;X_1,X_2)$ be as in Definition \ref{Def:ControlledBoundary} with $r_0,C_0$, then the above sequence is asymptotically exact in the following sense: \par
    (1) For any $0<\delta<1/100, N\geq 1, r>0$, there exist $0<\delta_1<\delta, N_1\geq N, 0<r_1<\min\{r,r_0/5 \}$, such that $\partial j(u_1\oplus u_2)$ is $(\delta,N_1,r)$-equivalent to $0$ for any $u_i\in QU_{\delta_1,N,r_1,k}(X_i)(i=1,2)$, where $\delta_1$ depends only on $\delta,N$; $N_1$ depends only on $N$ and $r_1$ depends only on $\delta,N,r$. \par
    (2) For any $0<\delta<1/100,N\geq 1,r>0$, there exists $0<\delta_2<\delta, N_2\geq N, 0<r_2<\min\{r,r_0/5 \}$, such that if $u$ is an element in $QU_{\delta_2,N,r_2,k}(X)$ for which $\partial(u)$ is $(\delta_2,N,r_2)$-equivalent to $0$ in $GQP_{\delta_2,N,r_2,k}(X)$, then there exist $u_i\in QU_{\delta,N_2,r,k}(X_i)(i=1,2)$ such that $j(u_1\oplus u_2)$ is $(\delta,N_2,r)$-equivalent to $u$, where $\delta_2$ depends only on $\delta,N$; $N_2$ depends only on $N$ and $r_2$ depends only on $\delta,N,r,r_0,C_0$.
\end{lemma}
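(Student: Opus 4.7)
The approach is to verify each of the two asymptotic exactness statements by direct computation with the explicit formulas for $\partial_0$ and $w_u$, carefully tracking the parameters $(\delta, N, r)$ at every step. A useful preliminary observation is that $w_u$ is literally invertible as defined (one checks $w_u w_u^{-1} = I$ identically, independent of whether $(u, v)$ are honest inverses), so $w_u(I \oplus 0) w_u^{-1}$ is always an exact idempotent in $M_2$, and all controlled estimates reduce to norm estimates of its difference from a standard trivial projection on suitable subsets of $X$.

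For Part (1), write $\tilde u_i = u_i + \chi_{X - X_i}$ and $\tilde v_i = v_i + \chi_{X - X_i}$, so that $j(u_1 \oplus u_2) = \tilde u_1 \oplus \tilde u_2$ is block diagonal and $\partial_0(\tilde u_1 \oplus \tilde u_2)$ decomposes, after the usual reshuffle of the $M_4$ blocks, as $\partial_0(\tilde u_1) \oplus \partial_0(\tilde u_2)$. For the first block, the cut-downs $\tilde u_1 X_1 = u_1$ and $\tilde v_1 X_1 = v_1$ remain $(\delta_1, N, r_1)$-inverses on $E^p_{X_1}$, which forces $w_{\tilde u_1}(I \oplus 0) w_{\tilde u_1}^{-1}$ to be within $O(N^c \delta_1)$ of $\chi_{X_1}(I \oplus 0) + \chi_{X - X_1}(0 \oplus I)$ on all of $X$; restricting to $Y \subseteq X_1$ and applying $\mathrm{Ad}((V_f, V_f^+))$ then gives $\partial(\tilde u_1) \sim 0$ in $GQP$. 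For the second block the cut $\tilde u_2 X_1 \cdot \tilde v_2 X_1$ is identity on $E^p_{X_1 - X_2}$ but on $E^p_{X_1 \cap X_2}$ differs from identity by a non-small boundary operator $\chi_{X_1 \cap X_2}(a_2 \chi_{X_2 - X_1} b_2) \chi_{X_1 \cap X_2}$ supported in the $r$-neighborhood of $X_2 - X_1$. I would handle this by running the analogous formula using $X_2$ in place of $X_1$ (for which $\tilde u_2 X_2 = u_2$ and $\tilde v_2 X_2 = v_2$ really are quasi-inverses), and showing that the $X_1$- and $X_2$-based boundary projections represent $(\delta', N', r')$-equivalent classes in $GQP$ via a controlled Mayer-Vietoris-type interpolation; the symmetry of $Y = \mathrm{bd}_{5r}(X_1) \cap \mathrm{bd}_{5r}(X_2)$ in the two subspaces makes this plausible. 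The equivalence then transports the triviality from the $X_2$-formula back to $\partial(\tilde u_2)$.

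For Part (2), suppose $u \in QU_{\delta_2, N, r_2, k}(X)$ with $\partial(u)$ trivial in $GQP_{\delta_2, N, r_2, k}(X_1 \cap X_2)$. The triviality produces a controlled homotopy of projections between $\mathrm{Ad}((V_f, V_f^+))(\partial_0(u))$ and $I \oplus 0$ on $X_1 \cap X_2$, and by Lemma \ref{Lemma:HomotopyImpliesSimilar} this homotopy lifts to a quasi-invertible $z$ in $QU$ of $X_1 \cap X_2$ (with controlled blow-up of $\delta, N, r$) that conjugates the two projections. Pulling $z$ back to the boundary region $\mathrm{bd}_{5r}(X_1) \cap \mathrm{bd}_{5r}(X_2)$ via the strong Lipschitz homotopy equivalence of Definition \ref{Def:ControlledBoundary} and Lemma \ref{Lemma:Homotopy}, I would use it multiplicatively to modify $u$ so that the modified $u X_1$ becomes an honest quasi-invertible on $X_1$; the resulting factor is $u_1 \in QU(X_1)$. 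The residual factor $u \cdot (u_1 + \chi_{X - X_1})^{-1}$ is then quasi-identity on $X - X_2$ and lies, up to $(\delta, N_2, r)$-equivalence, in $QU(X_2)$, yielding $u_2$. Finally, the product $(u_1 + \chi_{X - X_1})(u_2 + \chi_{X - X_2})$ is converted to the direct sum $j(u_1 \oplus u_2)$ by the standard rotation homotopy inside $M_2$.

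The principal obstacle is Part (2): converting the vanishing of $\partial(u)$, which is data on the thin set $X_1 \cap X_2$, into an honest factorisation of $u$ along $X_1$ and $X_2$ without uncontrolled propagation blow-up. The strong excision hypothesis of Definition \ref{Def:ControlledBoundary} is indispensable here, since it provides the Lipschitz retractions that let one lift the trivialisation from $X_1 \cap X_2$ back to the $5r$-thickening while keeping $N$ and $r$ bounded by the universal constants $C_0, r_0$ of the triple. A secondary difficulty in Part (1) is verifying that the $X_1$- and $X_2$-based boundary formulas give controlled-equivalent classes in $GQP$, which requires a nontrivial Mayer-Vietoris-style interpolation within the formalism developed in Section 3.
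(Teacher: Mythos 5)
Your overall strategy is aligned with the paper's, but there are two concrete problems in your treatment of Part (1) that keep it from closing.

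First, you write that ``restricting to $Y\subseteq X_1$'' finishes the $\tilde u_1$ block, but $Y=\mathrm{bd}_{5r}(X_1)\cap\mathrm{bd}_{5r}(X_2)=\mathrm{bd}_{5r}(X_1\cap X_2)$ is \emph{not} contained in $X_1$: it is a two-sided $5r$-collar of $X_1\cap X_2$ that protrudes into $X_2-X_1$ as well. Consequently $\chi_Y\,w_{\tilde u_1}(I\oplus 0)w_{\tilde u_1}^{-1}\,\chi_Y$ is not close to $\chi_Y(I\oplus 0)$; it is close to $\chi_{Y\cap X_1}(I\oplus 0)+\chi_{Y- X_1}(0\oplus I)$, and an additional (propagation-$0$) swap/rotation and pushforward by $\mathrm{Ad}((V_f,V_f^+))$ is needed before one can compare with $e_m$ and invoke Lemma~\ref{Lemma:SimilarImplyHomotopy}. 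The paper's proof of~(1) is itself very terse here and leans on the same lemma, so this is more a matter of filling in a step than of disagreeing with the paper, but your statement as written is simply false as set-theoretic geometry.

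Second, and more substantively, the heart of your Part (1) is the claim that ``the $X_1$-based and $X_2$-based boundary projections represent $(\delta',N',r')$-equivalent classes in $GQP$ via a controlled Mayer--Vietoris-type interpolation.'' You flag this yourself as ``nontrivial,'' and indeed it is precisely the content that needs proving: one has to exhibit a controlled $QU$-conjugation between $\chi_Y w_{\tilde u_2}(I\oplus 0)w_{\tilde u_2}^{-1}\chi_Y$ (using $\tilde u_2 X_1$) and the analogous expression built from $\tilde u_2 X_2$, while keeping $\delta$, $N$ and the propagation bounded by universal functions of $N$ and $C_0$. No estimate or formula is offered. This is a genuine gap: without it your argument does not establish $\partial j\sim 0$ for the $\tilde u_2$ block. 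An alternative, closer to the paper's intent, is to estimate $\chi_Y w_{\tilde u_2}(I\oplus 0)w_{\tilde u_2}^{-1}\chi_Y$ directly using that $\tilde u_2 X_1\cdot\tilde v_2 X_1 - \chi_{X_1}= \chi_{X_1}(a_2+b_2+a_2b_2)\chi_{X_1}-\chi_{X_1}a_2\chi_{X_2-X_1}b_2\chi_{X_1}$, observing that on $Y$ the second term is conjugated away by the same unitary that corrects the $X-X_1$ side, then apply Lemma~\ref{Lemma:SimilarImplyHomotopy}; but this still requires explicit bookkeeping that you have not supplied.

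On Part (2) your outline matches the paper's argument in structure: trivialize $\partial_0(u)$ by Lemma~\ref{Lemma:HomotopyImpliesSimilar}, use the Lipschitz retraction from Definition~\ref{Def:ControlledBoundary} and Lemma~\ref{Lemma:Homotopy} to pull the conjugating quasi-invertible back to the boundary collar, and then factor $u$ (up to controlled error) into a piece supported near $X_1$ and a piece supported near $X_2$. The paper reads off $v_1$ from the $(1,1)$-block of $xw$ where $w=w_{u\oplus I}$ and then sets $v_2 \approx v_1'(u\oplus I)$ cut to a collar of $X_2$, while you describe the same factorization as ``modifying $u$ so that $uX_1$ becomes honestly quasi-invertible''; these are the same move expressed differently, and your final rotation from the product $(u_1+\chi_{X-X_1})(u_2+\chi_{X-X_2})$ to the block sum $j(u_1\oplus u_2)$ is standard. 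So Part (2) of your proposal is essentially the paper's proof in outline. The deficiency of the proposal is localized to Part (1), where the asserted interpolation is missing and the inclusion $Y\subseteq X_1$ is incorrect.
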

\begin{proof}
(1) follows from the definition of the boundary map and Lemma \ref{Lemma:SimilarImplyHomotopy}.\par
(2) By strong homotopy invariance of $QP$, for any $0<\delta'<\delta, N\geq 1, 0<r'_2<\min\{r,r_0/5 \}$, there exist $\delta_2<\delta', N'>N, 0<r_2<r'_2$ ($\delta_2$ depends only on $\delta',N$; $N'$ depends only on $N$; $r_2$ depends only on $r'_2,\delta',N,r_0,C_0$), such that , for any $u\in QU_{\delta_2,N,r_2,k}(X)$ whose boundary $\partial(u)$ is $(\delta_2,N,r_2)$-equivalent to $0$, then $\partial_0(u)$ is $(\delta',N',r'_2)$-equivalent to $0$. By Lemma \ref{Lemma:HomotopyImpliesSimilar} there exists an element $y$ in $QU_{\delta',C_1(N'),C_2(N',\delta')r'_2,k}(\mathrm{bd}_{5r_2}(X_1) \cap \mathrm{bd}_{5r_2}(X_2))$ with $(\delta',C_1(N'),C_2(N',\delta')r'_2)$-inverse $y'$, such that
                          $$||xw(I\oplus 0)w^{-1}x'-(I\oplus 0)||<C_3(N')\delta',$$
where $x=y+\chi_{X-\mathrm{bd}_{5r_2}(X_1) \cap \mathrm{bd}_{5r_2}(X_2)}, x'=y'+\chi_{X-\mathrm{bd}_{5r_2}(X_1) \cap \mathrm{bd}_{5r_2}(X_2)}, w=w_{u\oplus I}$.\par
    This implies that
                          $$||xw(I\oplus 0)-(I\oplus 0)xw||<C_4(N')\delta'.$$
Thus we have
           $$xw=\begin{pmatrix}
                 a   &   b\\
                 c   &   d
                \end{pmatrix},
             ||b||\leq C_4(N')\delta',
             ||c||\leq C_4(N')\delta',                  \eqno(A)
           $$
           $$w^{-1}x'=\begin{pmatrix}
                 a'  &   b'\\
                 c'  &   d'
                \end{pmatrix},
             ||b'||\leq C_4(N')\delta',
             ||c'||\leq C_4(N')\delta',                  \eqno(B)
           $$
Define
           $$v_1=a\chi_{\mathrm{bd}_{5r_2}}(X_1), v'_1=\chi_{\mathrm{bd}_{5r_2}}(X_1)a',$$
$(A)$ and $(B)$ tell us that $v_1\in QU_{(C_4(N')+1)\delta',C_1(N')N^3_2,(C_2(N',\delta')+3)r'_2,k}(\mathrm{bd}_{5r_2}(X_1))$ with inverse $v'_1$. $(A)$ and $(B)$ together with the definition of $w$, implies that
           $$||\chi_{X-\mathrm{bd}_{10r_2}(X_2)}(v'_1(u\oplus I-I))||<C_5(N')\delta',                  \eqno(C)$$
           $$||(v'_1(u\oplus I-I))\chi_{X-\mathrm{bd}_{10r_2}(X_2)}||<C_5(N')\delta'.                  \eqno(D)$$
Define
           $$v_2=\chi_{\mathrm{bd}_{10r_2}(X_2)}(v'_1(u\oplus I))\chi_{\mathrm{bd}_{10r_2}(X_2)}, v'_2=\chi_{\mathrm{bd}_{10r_2}(X_2)}((u'\oplus I)v_1)\chi_{\mathrm{bd}_{10r_2}(X_2)},$$
where $u'$ is the $(\delta_2,N,r_2)$-inverse of $u$. \par
$(C)$ and $(D)$ tell us that $v_2\in QU_{C_6(N')\delta', C_7(N'),C_8(N',\delta')r'_2,k}(\mathrm{bd}_{10r_2}(X_2))$ with quasi inverse $v'_2$. \par
    We require $0<r_2<r_0/10$. Let $f_1$ be the proper strong Lipschitz map from $\mathrm{bd}_{5r}(X_1)$ to $X_1$ realizing the strong Lipschitz homotopy equivalence. Let $f_2$ be the proper strong Lipschitz map from $\mathrm{bd}_{10r}(X_2)$ to $X_2$ realizing the strong Lipschitz homotopy equivalence. Define $u_i=\mathrm{Ad}((V_{f_i},V^+_{f_i}))(v_i)$ for $i=1,2$, where the pair $(V_{f_i},V^+_{f_i})$ corresponds to $\{\varepsilon_k\}$ for which $\sup_k(\varepsilon_k)<r'_2$. \par
    By $(C)$ and $(D)$, we have that $(v_1+\chi_{X-\mathrm{bd}(5r_2)(X_1)})\oplus (v_2+\chi_{X-\mathrm{bd}(10r_2)(X_2)})$ is $(C_9(N')\delta',C_{10}(N'),C_{11}(N',\delta')r'_2)$-equivalent to $u\oplus I$.
     Note that $C_j(N')$ depends only on $N'$ for $j=1,3,4,5,6,7,9,10$, $C_j(N_2,\delta')$ depends only on $N',\delta',C_0$ for $j=2,8,11$. \par
    By Lemma \ref{Lemma:Homotopy}, we can choose appropriate $\delta',N_2$ and $r'_2$ such that $u_1$ and $u_2$ satisfy the desired properties of (2), where $\delta'$ depends only on $\delta,N$; $N_2$ depends only on $N$; $r'_2$ depends only on $r,\delta,N,r_0,C_0$.
\end{proof}

\begin{corollary}
By Lemma $\ref{Lemma:ControlledSuspension}$ and Lemma $\ref{Lemma:ControlledExact}$, we have the following asymptotically exact sequence for $QU$ when $k>1$:
  $$QU_{\delta,N,r,k}(X_1) \oplus QU_{\delta,N,r,k}(X_2) \rightarrow QU_{\delta,N,r,k}(X) \rightarrow QU_{\delta,N,r,k-1}(X_1 \cap X_2)$$
\end{corollary}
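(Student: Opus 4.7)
The corollary is a formal consequence of the two cited lemmas once one exploits the asymptotic invertibility of the suspension map $\theta$ from $QU$ to $GQP$ at level $k-1$. My plan has three steps.

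First, I would define the middle-to-right map
$$\widetilde{\partial}:QU_{\delta,N,r,k}(X) \longrightarrow QU_{\delta,N,r,k-1}(X_1\cap X_2)$$
as a composition. Given $u\in QU_{\delta,N,r,k}(X)$, Lemma \ref{Lemma:ControlledExact} produces $\partial(u)\in GQP_{4N^4\delta,2N^6,6C_0r,k}(X_1\cap X_2)$. Reading the last index $k$ as $(k-1)+1$, Lemma \ref{Lemma:ControlledSuspension}(3) applied on the space $X_1\cap X_2$ returns an element $\widetilde{\partial}(u)\in QU_{\bullet,\bullet,\bullet,k-1}(X_1\cap X_2)$ whose image under $\theta$ is asymptotically equivalent to $\partial(u)$. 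The hypothesis $k>1$ is precisely what keeps the target index $k-1\geq 1$ well-defined for the suspension argument, and the map $j$ on the left is the same one already used in Lemma \ref{Lemma:ControlledExact}.

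Second, I would verify asymptotic vanishing of $\widetilde{\partial}\circ j$. Given $u_1\oplus u_2\in QU_{\delta,N,r,k}(X_1)\oplus QU_{\delta,N,r,k}(X_2)$ with sufficiently tight parameters, Lemma \ref{Lemma:ControlledExact}(1) says $\partial(j(u_1\oplus u_2))$ is asymptotically equivalent to $0$ in the relevant $GQP$-group. Since by construction $\theta(\widetilde{\partial}\circ j(u_1\oplus u_2))$ is equivalent to $\partial(j(u_1\oplus u_2))$, it too is asymptotically trivial. Then the injectivity half of Lemma \ref{Lemma:ControlledSuspension}, namely part (2), translates this into $\widetilde{\partial}\circ j(u_1\oplus u_2)\oplus I_m$ being asymptotically equivalent to $I_{1+m}$, which is the desired asymptotic triviality in $QU_{\delta,N,r,k-1}(X_1\cap X_2)$.

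Third, conversely, if $\widetilde{\partial}(u)$ is asymptotically trivial in $QU_{\delta,N,r,k-1}(X_1\cap X_2)$, then applying $\theta$ via the surjectivity half (Lemma \ref{Lemma:ControlledSuspension}(1)) gives $\theta(\widetilde{\partial}(u))\sim 0$ in $GQP$, hence $\partial(u)\sim 0$ in $GQP_{\bullet,\bullet,\bullet,k}(X_1\cap X_2)$. Feeding this into Lemma \ref{Lemma:ControlledExact}(2) produces $u_1\in QU_{\delta,N,r,k}(X_1)$ and $u_2\in QU_{\delta,N,r,k}(X_2)$ with $j(u_1\oplus u_2)$ asymptotically equivalent to $u$, completing the exactness.

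The main obstacle is the bookkeeping of the four parameters $(\delta,N,r,k)$. Each of the two cited lemmas shrinks $\delta,r$ and enlarges $N$ in a quantitatively controlled but nontrivial way, and composing them doubles this degradation. Fortunately, both lemmas are phrased in the ``for any target parameters $(\delta,N,r)$ there exist source parameters'' form, so one fixes the desired output parameters first, then chases backward through Lemma \ref{Lemma:ControlledSuspension}(3) to get the intermediate $GQP$-parameters, and finally backward through Lemma \ref{Lemma:ControlledExact} to obtain the required source parameters on $QU_{\delta,N,r,k}(X)$. The same backward parameter chase, with the appropriate parts of the two lemmas, underlies the verifications in the second and third steps, so the proof is essentially a parameter-juggling exercise with no new geometric input.
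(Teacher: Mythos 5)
Your proposal is correct and is essentially the argument the paper intends: since the paper gives no explicit proof, the expected derivation is exactly your composition $\widetilde{\partial}=\theta^{-1}\circ\partial$ (interpreting $\theta^{-1}$ via Lemma \ref{Lemma:ControlledSuspension}(3)), followed by transporting the two halves of asymptotic exactness in Lemma \ref{Lemma:ControlledExact} across the asymptotic isomorphism $\theta$, with the usual backward parameter chase. One small wording slip worth noting: in your third step you invoke ``the surjectivity half (Lemma \ref{Lemma:ControlledSuspension}(1))'', but part (1) of that lemma is the homotopy-preservation statement (equivalence of $u_1,u_2$ forces equivalence of $\theta(u_1),\theta(u_2)$); surjectivity is part (3). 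You do in fact use part (1) correctly there — you need $\widetilde{\partial}(u)\sim I$ to imply $\theta(\widetilde{\partial}(u))\sim\theta(I)=0$ — so the label is wrong but the mathematics is right.
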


\section{Spaces with finite asymptotic dimension}

    In this section, we will recall some facts about spaces with finite asymptotic dimension, and verify the $L^p$ coarse Baum-Connes conjecture for spaces with finite asymptotic dimension.
\begin{definition}[Gromov \cite{GromovHyperbolic}]
    The \textit{asymptotic dimension} of a metric space $X$ is the smallest integer $m$ such that for any $r>0$, there exists a uniformly bounded cover $C=\{U_i\}_{i\in I}$ of $X$ for which the $r$-multiplicity of $C$ is at most $m+1$; i.e. no ball of radius $r$ in the metric space intersects more than $m+1$ members of $C$. If no such $m$ exists, we say $X$ has infinite asymptotic dimension.
\end{definition}   

     A finitely generated group can be viewed as a metric space with a left-invariant \textit{word-length metric}. To be more precise, for a group $\Gamma$ with a finite symmetric generating set $S$, for any $\gamma\in\Gamma$, we define its length
    $$l_S(\gamma):=\min\{n:\gamma=s_1 \ldots s_n, s_i\in S\}$$
the word-length metric $d_S$ on $\Gamma$ is defined by
    $$d_S(\gamma_1,\gamma_2):=l_S(\gamma_1^{-1}\gamma_2)$$
for all $\gamma_1,\gamma_2\in \Gamma$. We remark that for any two finite symmetric generating sets $S_1,S_2$ of $\Gamma$, $(\Gamma,d_{S_1})$ is quasi-isometric to $(\Gamma,d_{S_2})$.

\begin{remark}
  Now we give some facts about asymptotic dimension:
  \begin{enumerate}
  \item The concept of asymptotic dimension is a coarse geometric analogue of the covering dimension in topology;
  \item Hyperbolic groups have finite asymptotic dimension as a metric space with word-length metric \cite{GromovHyperbolic}\cite{RoeHyperbolic}; 
  \item The class of finitely generated groups with finite asymptotic dimension is hereditary (Proposition 6.2 in \cite{YuFAD}) , i.e., if a finitely generated group $\Gamma$ has finite asymptotic dimension as a metric space with word-length metric, then any finitely generated subgroup of $\Gamma$ also has finite asymptotic dimension as a metric space with word-length metric.  
  \item If $\Gamma$ is a discrete subgroup of an almost connected Lie group, e.g. $\mathrm{SL}(n,\Z)$, then $\Gamma$ has finite asymptotic dimension.
  \item CAT(0) cube complexes have finite asymptotic dimension. \cite{WrightCAT0FAD}
  \item Certain relative hyperbolic groups have finite asymptotic dimension. \cite{OsinRelativeHyperbolic}
  \item Certain Coxeter groups have finite asymptotic dimension. \cite{DranishnikovFAD}
  \item Mapping class groups have finite asymptotic dimension. \cite{MappingClassGroup}
  \end{enumerate}
\end{remark}

\begin{construction}\label{construct:anticech}
    Let $X$ be a proper metric space with asymptotic dimension $m$. By the definition of asymptotic dimension there exists a sequence of covers $C_k$ of $X$ for which there exists a sequence of positive numbers $R_k\rightarrow \infty$ such that
    \begin{enumerate}
    \item  $R_{k+1}>4R_k$ for all $k$;
    \item  diameter$(U)<R_k/4$ for all $U\in C_k$;
    \item the $R_k$-multiplicity of $C_{k+1}$ is at most $m+1$, i.e. no ball with radius $R_k$ intersects more than $m+1$ members of $C_k+1$.
    \end{enumerate}
    Let $C_k'=\{B(U,R_k):U\in C_{k+1}\}$, where $B(U,R_k)=\{x\in X:d(x,U)<R_k\}$. (1), (2) and (3) imply that $\{C_k'\}$ is an anti-\v Cech system for $X$.\par
    Fix a positive integer $n_0$. For each $n>n_0$, let $r_n=\frac{R_n}{2R_{n_0+1}}-4$. By property (1) of the sequence ${R_k}$, there exists $n_1>n_0$ such that $r_n>2$ if $n>n_1$ and there exists a sequence of nonnegative smooth functions $\{\chi_n\}_{n>n_1}$ on $[0,\infty)$ for which
    \begin{enumerate}
     \item $\chi_n(t)=1$ for all $0\leq t\leq 2$, and $\chi_n(t)=0$ for all $t\geq r_n$;
     \item there exists a sequence of positive numbers $\varepsilon_n\rightarrow 0$ satisfying $|\chi'_n(t)|<\varepsilon_n\leq 1$ for all $n>n_1$.
    \end{enumerate}
    For each $U\in C_{n+1}(n>n_1)$, define
        $$U'=\{V\in N_{C'_{n_0}}:V\in C'_{n_0},U \cap V \not= \emptyset\}$$ \par
    We define a map $G_n:N_{C'_{n_0}}\rightarrow N_{C'_n}$ by
        $$G_n(x)=\sum_{U\in C_{n+1}} \frac{\chi_n(d(x,U'))}{\sum_{V\in C_{n+1}} \chi_n(d(x,V'))} B(U,R_n)$$
for all $x\in N_{C'_{n_0}}$.\par
    Let $n>n_1$, we define a map $i_{n_0n}:N_{C'_{n_0}}\rightarrow N_{C'_n}$ in such a way that, for each $V\in C_{n_0+1}$,
        $$i_{n_0n}(B(V,R_{n_0}))=B(U,R_n)$$
for some $U\in C_{n+1}$ satisfying $U\cap V\not= \emptyset$.\par
    Let $F_t$ be the linear homotopy between $G_n$ and $i_{n_0n}$, i.e. $F_t(x)=tG_n(x)+(1-t)i_{n_0n}(x)$ for all $t\in [0,1]$ and $x\in N_{C'_{n_0}}$.\par
\end{construction}
    By the above construction, we have the following important lemma:
\begin{lemma} \label{Lemma:PropagationArbitrarySmall}(Lemma 6.3 in \cite{YuFAD})
Let $X$ be a proper metric space with finite asymptotic dimension $m$, and $G_n,F_t$ and $i_{n_0n}$ be as above, then
\begin{enumerate}
\item $G_n$ is a proper Lipschitz map with a Lipschitz constant depending only on $m$;
\item $F_t$ is a strong Lipschitz homotopy between $G_n$ and $i_{n_0n}$ with a Lipschitz constant depending only on $m$;
\item For any $\varepsilon>0,R>0$, there exists $K>0$ such that $d(G_n(x),G_n(y))<\varepsilon$ if $n>K,d(x,y)<R$.
\end{enumerate}
\end{lemma}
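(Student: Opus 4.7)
The plan is to verify the three assertions in turn; all three reduce to estimates of the $\ell^1$-difference $\sum_{U \in C_{n+1}} |\lambda_U(x) - \lambda_U(y)|$, where $\lambda_U(x) := \chi_n(d(x,U'))/S(x)$ and $S(x) := \sum_V \chi_n(d(x,V'))$. A quotient-rule computation shows that this $\ell^1$-difference is bounded by $2\sum_U |\chi_n(d(x,U')) - \chi_n(d(y,U'))|/S(y)$, so the key quantities are a uniform lower bound on $S$ and an upper bound on the number of $U$'s active near $x$ and $y$.

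For the lower bound $S \geq 1$, any $x \in N_{C'_{n_0}}$ lies in a simplex spanned by vertices $V_1,\ldots,V_k \in C'_{n_0}$ with $k \leq m+1$, because the $R_{n_0}$-multiplicity of $C_{n_0+1}$ is at most $m+1$. A common point $p$ of the $V_i$ in $X$ lies in some $U_0 \in C_{n+1}$, so each $V_i$ belongs to $U_0'$; the barycentric coordinates satisfy $\max_i t_i \geq 1/(m+1)$, giving $d(x,U_0') \leq 2m/(m+1) < 2$ and hence $\chi_n(d(x,U_0')) = 1$. This gives (1) once I bound the number of active $U$'s in terms of $m$, using that $|\chi_n'| \leq \varepsilon_n \leq 1$, that $d(\cdot,U')$ is $1$-Lipschitz in the $\ell^1$-metric, and a local-finiteness estimate: only finitely many $U \in C_{n+1}$ can have their vertex set $U'$ meet a fixed finite subcomplex of $N_{C'_{n_0}}$, with the count controlled purely by $m+1$. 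Part (2) then follows: $i_{n_0 n}$ is a simplicial map, hence $1$-Lipschitz; the linear combination $F_t = tG_n + (1-t)i_{n_0 n}$ has Lipschitz constant bounded by the sum; equicontinuity in $t$ comes from the uniform boundedness of $\|G_n(x) - i_{n_0 n}(x)\|_{\ell^1}$, and properness passes through both maps.

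The main obstacle is part (3). Repeating the partition-of-unity computation gives
\[ d(G_n(x),G_n(y)) \leq 2\varepsilon_n \cdot d(x,y) \cdot N_n(x,y), \]
where $N_n(x,y)$ counts the $U \in C_{n+1}$ whose cut-off function $\chi_n(d(\cdot,U'))$ is nonzero at $x$ or $y$. Since $\chi_n$ interpolates between $1$ and $0$ on $[2,r_n]$ with $r_n \to \infty$, one has $\varepsilon_n = O(1/r_n)$; the delicate task is to show that $N_n(x,y)$ does not grow too fast in $n$ when $d(x,y) < R$. Here I would exploit the scaling $r_n = R_n/(2R_{n_0+1}) - 4$ prescribed by Construction \ref{construct:anticech}: a subset of $N_{C'_{n_0}}$ of $\ell^1$-diameter $O(R + r_n)$ corresponds, via the nerve construction, to a subset of $X$ of diameter $O((R + r_n) R_{n_0+1})$, and the $R_n$-multiplicity of $C_{n+1}$ together with the coarsening hypothesis $R_{k+1} > 4R_k$ controls how many members of $C_{n+1}$ can touch this region. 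Tracking these scales shows $N_n(x,y)$ grows at worst linearly in $r_n$, while $\varepsilon_n = O(1/r_n)$, so the product $\varepsilon_n N_n(x,y)$ tends to $0$ uniformly in pairs $(x,y)$ with $d(x,y) < R$. This precise bookkeeping of scales is the technical heart of the lemma, and follows the same strategy as Yu's argument for Lemma 6.3 of \cite{YuFAD}.
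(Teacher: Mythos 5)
The paper does not reprove this lemma; it simply cites Lemma~6.3 of \cite{YuFAD}, so there is no in-paper argument to compare against. Your plan for parts (1) and (2) is on the right track: the lower bound $S(x)\geq 1$ via the at-most-$(m+1)$-dimensional simplices and barycentric coordinates is correct, and the quotient-rule reduction to $\sum_U|\chi_n(d(x,U'))-\chi_n(d(y,U'))|$ is sound.

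The problem is part (3), which is the heart of the lemma. You write that $N_n(x,y)$ (the count of active $U\in C_{n+1}$) ``grows at worst linearly in $r_n$,'' while $\varepsilon_n=O(1/r_n)$, and conclude $\varepsilon_n N_n(x,y)\to 0$. This is a non sequitur: from $N_n=O(r_n)$ and $\varepsilon_n=O(1/r_n)$ you only get $\varepsilon_n N_n=O(1)$, and since $\chi_n$ must drop from $1$ to $0$ on $[2,r_n]$, the mean value theorem forces $\varepsilon_n>1/(r_n-2)$, so you cannot save the argument by taking $\varepsilon_n=o(1/r_n)$. The fix is that $N_n(x,y)$ is in fact bounded by a constant depending only on $m$, uniformly in $n$. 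Indeed, if $d(x,U')<r_n$ then some vertex $V\in U'$ is within $\ell^1$-distance $r_n+2$ of a vertex of the simplex containing $x$; since every vertex of $N_{C'_{n_0}}$ corresponds to a set in $X$ of diameter less than $R_{n_0+1}$ and consecutive vertices on the connecting path intersect, the corresponding point of $U$ lies within roughly $(r_n/2+2)R_{n_0+1}$ of a fixed $p_0\in X$. The scaling $R_n=2R_{n_0+1}(r_n+4)$ in Construction~\ref{construct:anticech} is precisely chosen so that this radius is less than $R_n$, whence the $R_n$-multiplicity condition caps the number of such $U$ at $m+1$, giving $N_n(x,y)\leq 2(m+1)$ independently of $n$. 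With this corrected bound, $\varepsilon_n N_n(x,y)\to 0$ follows immediately from $\varepsilon_n\to 0$, and part (3) holds. (Incidentally, the same constant bound, not merely local finiteness of $U'$'s meeting a simplex, is what makes the Lipschitz constant in part (1) depend only on $m$ and not on $n$.)
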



The following lemma plays a crucial role in the proof of Theorem \ref{Thm:LpBCCFAD}. Its proof is based on the Eilenberg swindle argument and the controlled cutting and pasting exact sequence in Section \ref{section:cutpaste}.
\begin{lemma}\label{Lemma:VanishingControlledObstruction}
Let $X$ be a simplicial complex with finite dimension $m$ and endowed with $\ell^1$ metric. For any $k>m+1, 0<\delta<1/100, N\geq 1, r>0$, there exist $0<\delta_1\leq \delta, N_1\geq N, 0<r_1<r$, such that every element $u$ in $QU_{\delta_1,N,r_1,k}(X)$ is $(\delta,N_1,r)$-equivalent to $I$, where $\delta_1$ depends only on $\delta,N$; $N_1$ depends only on $N$ and $r_1$ depends only on $r,\delta,N$.
\end{lemma}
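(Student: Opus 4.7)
The plan is to proceed by induction on the dimension $m$ of $X$, with base case $m=0$ handled by an Eilenberg swindle and the inductive step handled by the controlled cutting--pasting exact sequence from the Corollary at the end of Section \ref{section:cutpaste}.

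For the base case $m=0$: a $0$-dimensional simplicial complex with $\ell^1$-metric is a disjoint union of isolated points at mutually infinite distance, so any operator with finite propagation is block-diagonal over components, reducing the problem to a one-point space. For such $X$ I would implement the Eilenberg swindle: given $u \in QU_{\delta_1,N,r_1,k}(X)$ with $k>1$, the infinite sum $u \oplus I \oplus I \oplus \cdots$ is equivalent to $I \oplus u \oplus I \oplus \cdots$ by rotating adjacent pairs using the matrices $R(t)$ already employed in the construction of $Z_t(u)$ and $\theta$. Combining these rotations gives $u \oplus \bigoplus I \sim \bigoplus I$, and the extra coordinate direction available when $k>1$ is used to concatenate the swindle homotopies while keeping the boundary value equal to $I$ on $\mathrm{bd}([0,1]^k)$.

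For the inductive step, assume the lemma for complexes of dimension $<m$. Using the simplicial structure of $X$, I would decompose $X = X_1 \cup X_2$ where $X_1$ is a small regular neighborhood of the barycenters of the $m$-simplices and $X_2$ is a thickening of the $(m-1)$-skeleton. By choosing the thickening scale appropriately relative to $r_1$, the triple $(X;X_1,X_2)$ satisfies the strong excision condition of Definition \ref{Def:ControlledBoundary}, with $X_1$ strongly Lipschitz homotopy equivalent to a $0$-dimensional complex, $X_2$ strongly Lipschitz homotopy equivalent to a complex of dimension $<m$, and $\dim(X_1 \cap X_2)<m$. Given $u \in QU_{\delta_1,N,r_1,k}(X)$ with $k>m+1$, the Corollary to Lemma \ref{Lemma:ControlledExact} (with the $QU$-valued boundary obtained via Lemma \ref{Lemma:ControlledSuspension}) places $u$ in an asymptotically exact sequence with target $QU_{\cdot,\cdot,\cdot,k-1}(X_1 \cap X_2)$. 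Since $k-1>m$ and $\dim(X_1 \cap X_2)<m$, the inductive hypothesis forces the boundary image to be equivalent to $I$, so by asymptotic exactness $u$ is equivalent to $j(u_1 \oplus u_2)$ for some $u_i \in QU(X_i)$. Finally, I would use strong Lipschitz homotopy invariance (Lemma \ref{Lemma:Homotopy}) to transport each $u_i$ to the lower-dimensional model of $X_i$, where either the base case (for $X_1$) or the inductive hypothesis (for $X_2$) yields equivalence to $I$.

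The main obstacle will be the constant bookkeeping. Each invocation of Lemma \ref{Lemma:ControlledExact} costs a factor depending on $N$, $\delta$, $r_0$ and $C_0$; each use of Lemma \ref{Lemma:Homotopy} and Lemma \ref{Lemma:ControlledSuspension} costs factors of the form $D(N)$, $N^{100}$, and a multiplicative distortion of the scale; and the swindle in the base case costs further polynomial factors in $N$. Since $m$ is fixed, the induction has only finitely many layers, so one can choose $\delta_1$ sufficiently small, $r_1$ sufficiently small, and $N_1$ sufficiently large (with the stipulated dependencies: $\delta_1$ on $\delta,N$; $N_1$ on $N$; $r_1$ on $r,\delta,N$) so that after $m+1$ rounds the surviving parameters match those demanded in the conclusion. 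A secondary technical point is verifying that the neighborhood scale for $X_1,X_2$ can be chosen uniformly in $m$ so that all the strong Lipschitz homotopy equivalences appearing in the induction have Lipschitz constants controlled by a function of $m$ alone.
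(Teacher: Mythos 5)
Your proposal follows essentially the same path as the paper's proof: induction on the skeleton with an Eilenberg-swindle base case for dimension $0$, a barycenter-neighborhood/skeleton-thickening decomposition $X^{(m)} = X_1 \cup X_2$, the controlled Mayer--Vietoris sequence of Section \ref{section:cutpaste}, and strong Lipschitz homotopy invariance to push everything down to lower-dimensional models. One small correction: the paper fixes the decomposition at a universal scale, $\triangle_1 = \{x : d(x,c(\triangle)) \leq 1/100\}$ and $\triangle_2 = \{x : d(x,c(\triangle)) \geq 1/100\}$, so the strong-excision data $(r_0, C_0)$ are fixed \emph{before} choosing $r_1$ and one simply shrinks $r_1$ below $r_0/5$; letting the thickening scale depend on $r_1$, as you suggest, risks a circular dependency among the constants ($r_0$ and $C_0$ would depend on $r_1$, while $r_1$ must be chosen small relative to $r_0$), and is best avoided.
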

\begin{proof}
Let $X^{(n)}$ be the $n$-skeleton of $X$, we will prove our lemma for $X^{(n)}$ by induction on $n$. \par
    When $n=0$, we choose $r_1=\min\{r,2\}$. Let $v$ be the $(\delta_1,N,r_1)$-inverse of $u$. Then $\mathrm{prop}(u(t))=\mathrm{prop}(v(t))=0$. For $t_0\in [0,\infty)$, we define:
             $$u_{t_0}(t)=\left \{
                                \begin{array}{ll}
                                I,         &  0\leq t\leq t_0;\\
                                u(t-t_0),  &  t_0\leq t<+\infty.
                                \end{array} \right.
             $$
Similarly, we can define $v_{t_0}$ for $t_0\in [0,\infty)$. Thus $v_{t_0}$ is the $(\delta_1,N,r_1)$-inverse of $u_{t_0}$. \\
Define
             $$E^{p,\infty}_X=(\oplus^{\infty}_{k=0} E^p_X)\oplus E^p_X$$
    Let $w_1(t')$ be the linear homotopy between $u\oplus^{\infty}_{k=1}I \oplus I$ and $u\oplus^{\infty}_{k=1}u_kv_k\oplus I$. Let $w_2(t')=(\oplus^{\infty}_{k=0}u_k\oplus I)(I\oplus^{\infty}_{k=1}v_{k-t'}\oplus I)$, where $t'\in [0,1]$. \par
    Let $T,T^*:E^{p,\infty}_X\rightarrow E^{p,\infty}_X$ be linear maps defined by
             \begin{align*}T((h_0,h_1,\ldots),h))&=(0,h_0,h_1,\ldots),h)\\
             T^*((h_0,h_1,\ldots),h))&=(h_1,h_2,\ldots),h)
             \end{align*}
Thus
             $$I\oplus^{\infty}_{k=1}v_{k-1}\oplus I=T(\oplus^{\infty}_{k=0}(v_{k}-I)\oplus 0)T^*+I.$$
Hence there exists a homotopy $s_1(t')(t'\in[0,1])$ connecting $I\oplus^{\infty}_{k=1}v_{k-1}\oplus I$ and $\oplus^{\infty}_{k=0}v_{k}\oplus I$.\par
    Let $s_2(t')(t'\in[0,1])$ be the linear homotopy between $\oplus^{\infty}_{k=0}u_kv_k\oplus I$ and $\oplus^{\infty}_{k=0}I\oplus I$.\par
    Define
             $$w(t')=\left\{
                          \begin{array}{ll}
                          w_1(4t'),                                        &   0\leq t'\leq 1/4;\\
                          w_2(4t'-1),                                      &   1/4\leq t'\leq 1/2;\\
                          (\oplus^{\infty}_{k=0}u_k\oplus I)s_1(4t'-2),    &   1/2\leq t'\leq 3/4;\\
                          s_2(4t'-3),                                      &   3/4\leq t'\leq 1.\\
                          \end{array}\right.
             $$ \par
    It is not difficult to see $w(t')$ is the homotopy connecting $u\oplus I$ to $I$, thus we can choose appropriate $\delta_1$ and $N_1$ satisfying the lemma. \par
    Assume by induction that the lemma holds for $n=m-1$, next we will prove the lemma holds for $n=m$. For each simplex $\triangle$ of dimension $m$ in $X$, we let
    $$\triangle_1=\{x\in \triangle: d(x,c(\triangle))\leq 1/100\},\triangle_2=\{x\in \triangle: d(x,c(\triangle))\geq 1/100\},$$
where $c(\triangle)$ is the center of $\triangle$. \par
    Let
      $$X_1=\bigcup_{\triangle:\text{simplex of dimension $m$ in X}} \triangle_1;$$
      $$X_2=\bigcup_{\triangle:\text{simplex of dimension $m$ in X}} \triangle_2.$$
Notice that: \\
(1) $X_1$ is strongly Lipschitz homotopy equivalent to
                  $$\{c(\triangle): \text{$\triangle$ is $m$-dimensional simplex in X}\};$$
(2) $X_2$ is strongly Lipschitz homotopy equivalent to $X^{(m-1)}$;\\
(3) $X^{(m)}=X_1 \cup X_2$ and $X_1 \cap X_2$ is the disjoint union of the boundaries of all $m$-dimensional $\triangle_1$ in $X^{(m)}$. \par
    (1) and (2) together with strongly Lipschitz homotopy invariance of $QU$ and the induction hypothesis, imply that our lemma holds for $X_1$ and $X_2$.\par
    By strongly Lipschitz homotopy invariance of $QU$ and the controlled cutting and pasting exact sequence, we also know that our lemma holds for $X_1\cap X_2$. \par
    Obviously, $(X^{(m)},X_1,X_2)$ satisfies the strong excision condition, thus we can complete our induction process by using the controlled cutting and pasting exact sequence and the controlled five lemma.
\end{proof}

Now we are ready to prove the main theorem of this section.
\begin{theorem}\label{Thm:LpBCCFAD}
For any $p\in [1,\infty)$, the $L^p$ coarse Baum-Connes conjecture holds for proper metric spaces with finite asymptotic dimension.
\end{theorem}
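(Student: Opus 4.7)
By Lemma \ref{Thm:VanishingObstruction}, it suffices to show that the obstruction group $\lim_{n\to\infty} K_*(B^p_{L,0}(N_{C_n}))$ vanishes, where $\{C_n\}$ is any anti-\v{C}ech system for $X$. The plan is to use the specific anti-\v{C}ech system $\{C'_n\}$ from Construction \ref{construct:anticech}: since the $R_n$-multiplicity of $C_{n+1}$ is at most $m+1$ (where $m=\asdim X$), each nerve $N_{C'_n}$ is a simplicial complex of dimension at most $m$ endowed with the $\ell^1$ metric. This is exactly the setting to which Lemma \ref{Lemma:VanishingControlledObstruction} applies.

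The argument is then an Eilenberg-swindle style limit argument performed at the level of controlled $K$-theory. Given a class $[\xi]\in K_*(B^p_{L,0}(N_{C'_{n_0}}))$, I would first represent it by controlled data: by Lemma \ref{Lemma:QuasiVSTrueProjection} and Lemma \ref{Lemma:QuasiVSTrueInvertible}, together with iterated application of the suspension isomorphism $\theta$ from Lemma \ref{Lemma:ControlledSuspension}, every class in $K_*(B^p_{L,0}(N_{C'_{n_0}}))$ can be represented by an element of $QU_{\delta,N,r,k}(N_{C'_{n_0}})$ for some $k>m+1$, with $\delta,N,r$ of my choice (pushing to higher $k$ via $\theta$ at the cost of adjusting constants). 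Fix therefore parameters $0<\delta<1/100$, $N\geq 1$, $r>0$ and a sufficiently large $k>m+1$ so that this representation is available.

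Next I push this controlled representative forward along the anti-\v{C}ech system. The simplicial map $i_{n_0 n}:N_{C'_{n_0}}\to N_{C'_n}$ induces $\mathrm{Ad}((V_{i_{n_0 n}},V^+_{i_{n_0 n}}))$ on $B^p_L$, hence on $QU_{*,*,*,k}$. A direct estimate is unhelpful because $i_{n_0 n}$ does not improve propagation; the trick is to replace $i_{n_0 n}$ by the smoothing map $G_n$ of Construction \ref{construct:anticech}. By Lemma \ref{Lemma:PropagationArbitrarySmall}(2), $F_t$ is a strong Lipschitz homotopy between $G_n$ and $i_{n_0 n}$ with Lipschitz constant depending only on $m$; so by the strong Lipschitz homotopy invariance (Lemma \ref{Lemma:Homotopy} for $QU$, and the analogous statement for $QP$ used via $\theta$), the induced maps on controlled $K$-theory agree, and I may work with $G_n$ instead of $i_{n_0 n}$. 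Now Lemma \ref{Lemma:PropagationArbitrarySmall}(3) guarantees that given any target propagation $r_1>0$ (which will come from Lemma \ref{Lemma:VanishingControlledObstruction}), there exists $K$ such that for $n>K$ the pushforward $\mathrm{Ad}((V_{G_n},V^+_{G_n}))$ produces elements of $QU_{\delta',N',r_1,k}(N_{C'_n})$ with the new propagation arbitrarily small and the other constants depending only on $m$ and the original data.

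Finally I apply Lemma \ref{Lemma:VanishingControlledObstruction} to $N_{C'_n}$ (dimension at most $m$) with $k>m+1$: for $n$ large enough, the pushed-forward element is $(\delta,N_1,r)$-equivalent to $I$, hence represents the zero class in $K_*(B^p_{L,0}(N_{C'_n}))$. Since this holds for every class in $K_*(B^p_{L,0}(N_{C'_{n_0}}))$ and every $n_0$, the directed limit vanishes, as required. The main technical obstacle is bookkeeping: one must coherently track the $(\delta,N,r)$ parameters as they are successively degraded by the suspension isomorphism $\theta$ (Lemma \ref{Lemma:ControlledSuspension}), by the strong Lipschitz homotopy invariance (Lemma \ref{Lemma:Homotopy}) and finally by Lemma \ref{Lemma:VanishingControlledObstruction}, and verify that they can always be brought into a regime where the vanishing lemma applies; the compatibility with the suspension step requires that $k$ be at least $m+2$ (so that after one application of $\theta^{-1}$ we still have $k-1>m+1$) and a corresponding $GQP$ version of the vanishing lemma deduced from Lemma \ref{Lemma:ControlledSuspension} and Lemma \ref{Lemma:VanishingControlledObstruction}.
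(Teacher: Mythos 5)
Your proposal is correct and follows essentially the same route as the paper's proof: reduce to the vanishing of the obstruction group via Lemma \ref{Thm:VanishingObstruction}, represent a class by an element of $QU_{\delta_1,N,r,k}(N_{C'_{n_0}})$ with $k>m+1$ using Lemmas \ref{Lemma:QuasiVSTrueProjection}, \ref{Lemma:QuasiVSTrueInvertible}, \ref{Lemma:ControlledSuspension}, push forward along $G_n$ (which is strongly Lipschitz homotopic to $i_{n_0 n}$ by Lemma \ref{Lemma:PropagationArbitrarySmall}(2), so induces the same map by Lemma \ref{Lemma:Homotopy}) to shrink propagation below the threshold $r_1$ of Lemma \ref{Lemma:VanishingControlledObstruction} via Lemma \ref{Lemma:PropagationArbitrarySmall}(3), and then conclude that the pushforward is trivial in $QU$, hence the class dies in the direct limit. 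The only difference is that you spell out the bookkeeping of constants a bit more explicitly, which the paper leaves implicit.
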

\begin{proof}
    Let $X$ be a proper metric space with asymptotic dimension $m$. By Theorem \ref{Thm:VanishingObstruction}, it is enough to prove that
               $$\lim_{n\rightarrow \infty}K_i(B^p_{L,0}(N_{C'_n}))=0,$$
where $C'_n$ is as in Construction \ref{construct:anticech}. \par
     Lemma \ref{Lemma:QuasiVSTrueProjection}, \ref{Lemma:QuasiVSTrueInvertible}  and \ref{Lemma:ControlledSuspension} tell us that any element $[q]$ in $K_i(B^p_{L,0}(N_{C'_{n_0}}))$ can be represented as an element $u$ in $QU_{\delta_1,N,r,k}(N_{C'_{n_0}})$ for some $N,r$ and $k>m+1$, where $\delta_1$ is as in Lemma \ref{Lemma:VanishingControlledObstruction} for some $0<\delta<1/100$. Let
               $$u_n=\mathrm{Ad}((V_{G_n},V^+_{G_n}))(u),$$
where $G_n$ is as in Lemma \ref{Lemma:PropagationArbitrarySmall}, $\mathrm{Ad}((V_{G_n},V^+_{G_n}))$ is defined by $\{\varepsilon_m\}$ for which $\sup(\varepsilon_m)<r_1/10$, where $r_1$ is as in Lemma \ref{Lemma:VanishingControlledObstruction}. \par
    By Lemma \ref{Lemma:PropagationArbitrarySmall} (3), there exists $K>0$ such that
               \begin{center}
               $\mathrm{prop}(u_n)<r_1,$ for $n>K.$
               \end{center}
     Since the asymptotic dimension of $X$ is $m$, we have $\dim(N_{C'_n})\leq m$ for all $n$. By Lemma $\ref{Lemma:VanishingControlledObstruction}$, we have that $u_n$ is $(\delta,N_1,r)$-equivalent to $I$ in $QU_{\delta,N_1,r,k}(N_{C'_n})$ for $n>K$.\par
    By Lemma $\ref{Lemma:PropagationArbitrarySmall}$ (2), strongly Lipschitz homotopy invariance of $QU$, Lemma \ref{Lemma:QuasiVSTrueProjection} (2) and Lemma \ref{Lemma:QuasiVSTrueInvertible} (2), we have that $\mathrm{Ad}((V_{i_{n_0n}},V^+_{i_{n_0n}}))(u)$ and $u_n$ correspond to the same element in $K_i(B^p_{L,0}(N_{C'_n}))$. \par
    Thus $[q]=0$ in $\lim_{n\rightarrow \infty}K_i(B^p_{L,0}(N_{C'_n}))$.
\end{proof}

\section{$K$-theory of $L^p$ Roe algebras}
    In this section, we shall use the dual $L^p$ $K$-homology as a bridge to prove that the $L^p$ $K$-homology is independent of $p$. Combining  the Theorem \ref{Thm:LpBCCFAD}, we obtain that the $K$-theory of the $L^p$ Roe algebra does not depend on $p\in (1,\infty)$ for spaces with finite asymptotic dimension.  \par

\subsection{Dual $L^p$ Localization algebra and dual $L^p$ $K$-homology}



    Let $p\in (1,\infty)$, $Z$ and $Z'$ be countable discrete measure spaces, then $\ell^p(Z)$ has a natural Schauder basis $\{e_i\}_{i\in Z}$, where $e_i(z)=1$ for $i=z$ and $e_i(z)=0$ for $i\not=z$. Similarly, $\ell^p(Z')$ has a natural Schauder basis $\{e'_i\}_{i\in Z'}$. Let $T$ be a bounded operator from $\ell^p(Z)$ to $\ell^p(Z')$, $T$ can be considered as a countably dimensional matrix under the Schauder basis $\{e_i\}$ and $\{e'_i\}$. We can define $T^*$ as the transpose of the matrix of $T$. We call $T$ be a \textit{dual-operator}, if $T^*$ is a bounded operator from $\ell^p(Z')$ to $\ell^p(Z)$ under the Schauder basis $\{e'_i\}$ and $\{e_i\}$. We call $T$ a \textit{compact dual-operator}, if $T$ and $T^*$ are compact operators from $\ell^p(Z)$ to $\ell^p(Z')$ and from $\ell^p(Z')$ to $\ell^p(Z)$, respectively. We define the  \textit{maximal norm} of dual-operator $T$ by $||T||_{\max}:=\max\{||T||,||T^*||\}$. \par
    For $p\in(1,\infty)$, let $\mathcal{B}^*(\ell^p(Z),\ell^p(Z'))$ be the Banach space of all dual-operators from $\ell^p(Z)$ to $\ell^p(Z')$ with maximal norm. Let $\mathcal{K}^*(\ell^p(Z),\ell^p(Z'))$ be the Banach space of all compact dual-operators from $\ell^p(Z)$ to $\ell^p(Z')$. It is easy to see that $\mathcal{K}^*(\ell^p(Z))$ is a closed ideal of $\mathcal{B}^*(\ell^p(Z))$.
\begin{remark}
    For $p\in (1,\infty)$, let $q$ be the dual number of $p$, i.e., $1/p+1/q=1$. If $T$ is a dual-operator acting on $\ell^p(Z)$, then $T$ can be considered as a bounded operator acting on $\ell^q(Z)$ and $||T||_{\ell^q(Z)}=||T^*||_{\ell^p(Z)}$. This is why we call such $T$ a dual-operator. Note that $\mathcal{B}^*(\ell^p(Z))=\mathcal{B}^*(\ell^q(Z))$ for $p,q\in (1,\infty)$ and $1/p+1/q=1$.
\end{remark}

\begin{lemma}\label{Lemma:LpCompact}
    Let $p\in (1,\infty)$, $Z$ be a countable discrete measure space. If we fixed a bijection between $Z$ and $\mathbb{N}$, then $\ell^p(Z)$ has a natural Schauder basis $\{e_i\}_{i\in \mathbb{N}}$. For any $K\in \mathcal{K}^*(\ell^p(Z))$, we have
                      $$\lim_{n\rightarrow \infty}F_nKF_n=K$$
in $\mathcal{K}^*(\ell^p(Z))$, where $F_n$ is the coordinate projection from $\ell^p(Z)$ to the subspace generated by $e_1,\cdots,e_n$.
\end{lemma}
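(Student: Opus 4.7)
The plan is to reduce the maximal-norm convergence to two $\ell^p$-norm convergences. Since $F_n$ is a coordinate (diagonal) projection, its matrix transpose equals itself, so $(F_n K F_n)^* = F_n K^* F_n$. Because $\|\cdot\|_{\max}=\max\{\|\cdot\|,\|\cdot^*\|\}$ and $K^*$ is again compact on $\ell^p(Z)$ by hypothesis, it suffices to prove the following general fact: for every $p\in(1,\infty)$ and every compact operator $T$ on $\ell^p(Z)$, one has $\|F_n T F_n - T\|_{\ell^p\to\ell^p}\to 0$. Applying this separately to $T=K$ and to $T=K^*$ yields the lemma.

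To prove the general fact, I would split
\[
F_n T F_n - T = (F_n-I)\,T\,F_n + T\,(F_n-I),
\]
and handle the two summands by different routes. Since $\{e_i\}$ is a Schauder basis for $\ell^p(Z)$ when $p<\infty$, by definition $F_n\to I$ strongly on $\ell^p(Z)$; moreover $\|F_n\|=1$ for every $n$. For the first summand, compactness of $T$ makes $T(B_{\ell^p})$ relatively compact, and uniform boundedness of $\{F_n\}$ together with strong convergence $F_n\to I$ gives uniform convergence on this precompact set, hence $\|(F_n-I)T\|\to 0$; multiplying on the right by $F_n$ (of norm $1$) preserves this.

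For the second summand $T(F_n-I)$, strong convergence of $F_n$ is on the \emph{wrong} side, so I would pass to Banach-space adjoints. For $q$ conjugate to $p$ (here $p\in(1,\infty)$ is essential), the isometry $\|S\|_{\ell^p\to\ell^p}=\|S'\|_{\ell^q\to\ell^q}$ gives
\[
\|T(F_n-I)\|_{\ell^p\to\ell^p}=\|(F_n-I)'\,T'\|_{\ell^q\to\ell^q}.
\]
By Schauder's theorem, $T'$ is compact on $\ell^q(Z)$. Computing on the coordinate functionals shows $F_n'=F_n$ as an operator on $\ell^q(Z)$, and $\{e_i\}$ is also a Schauder basis for $\ell^q(Z)$, so $F_n\to I$ strongly on $\ell^q(Z)$ as well. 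The same compactness-plus-strong-convergence argument as in the previous paragraph, now applied to $T'$ and to $(F_n-I)$ on $\ell^q$, yields $\|(F_n-I)T'\|_{\ell^q\to\ell^q}\to 0$, which translates back to $\|T(F_n-I)\|_{\ell^p\to\ell^p}\to 0$.

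The only subtle point is this duality step: it is precisely where the restriction $p\in(1,\infty)$ enters (so that $\ell^p$ is reflexive and the dual basis is also Schauder), and where the dual-compactness hypothesis on $K$ is needed so that both $K$ and $K^*$ fall under the general fact. Everything else reduces to the standard observation that a strongly convergent, uniformly bounded net of operators converges uniformly on precompact sets, applied to the image of the unit ball under a compact operator.
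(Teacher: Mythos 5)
Your proof is correct, and its first step—observing that $F_n^*=F_n$, so $(F_nKF_n-K)^*=F_nK^*F_n-K^*$ and the $\max$-norm claim reduces to the two $\ell^p$-operator-norm limits for $K$ and for $K^*$—is exactly the reduction the paper makes. At that point the paper simply cites Proposition~1.8 of Phillips for both limits, whereas you supply a self-contained proof of the needed general fact. Your argument for that fact is the standard one and is sound: the split $F_nTF_n-T=(F_n-I)TF_n+T(F_n-I)$, with the first term controlled by precompactness of $T(B_{\ell^p})$ together with $\|F_n\|=1$ and $F_n\to I$ strongly, and the second term handled by passing to the Banach adjoint $T'$ on $\ell^q$ (compact by Schauder's theorem), using $F_n'=F_n$ and the fact that $\{e_i\}$ is a Schauder basis of $\ell^q$ precisely because $1<q<\infty$. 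This correctly pinpoints where the hypothesis $p\in(1,\infty)$ enters, and is consistent with the paper's remark that the statement fails for $p=1$.
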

\begin{proof}
    We just need to prove $\lim_{n\rightarrow \infty}||F_nKF_n-K||_{max}=0$, i.e. $$\lim_{n\rightarrow \infty}||F_nKF_n-K||_{l^p(Z)}=0 \text{ and }\lim_{n\rightarrow \infty}||F^*_nK^*F^*_n-K^*||_{\ell^p(Z)}=0.$$ These are true by the Proposition 1.8 in \cite{PhillipsLp}. 
\end{proof}
    This lemma is false for $p=1$, N.C. Phillips constructed a rank one operator without this property in \cite{PhillipsLp}.
\begin{corollary}\label{Cor:KtheoryLpCompact}
    Let $p\in (1,\infty)$, $Z$ be a countable discrete measure space, then $K_1(\mathcal{K}^*(\ell^p(Z)))=0$
                   and  $K_0(\mathcal{K}^*(\ell^p(Z)))=\mathbb{Z}$ generated by a rank one idempotent.
\end{corollary}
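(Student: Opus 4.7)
The plan is to reduce to a direct limit of matrix algebras and invoke continuity of $K$-theory for Banach algebras.

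First, I would identify the finite-dimensional compressions. For each $n$, let $A_n := F_n\mathcal{B}^*(\ell^p(Z))F_n$, viewed as a closed subalgebra of $\mathcal{K}^*(\ell^p(Z))$ (every element here is supported on a finite set of basis vectors, hence automatically a compact dual-operator). As a Banach algebra, $A_n$ is just $M_n(\mathbb{C})$ equipped with the norm $\|T\|_{\max} = \max\{\|T\|_{\ell^p},\|T^*\|_{\ell^p}\}$, which is equivalent to any standard norm on $M_n(\mathbb{C})$. The inclusion $A_n \hookrightarrow A_{n+1}$ is the standard corner embedding $T \mapsto T \oplus 0$.

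Next, I would use Lemma \ref{Lemma:LpCompact} to conclude that $\mathcal{K}^*(\ell^p(Z))$ is the closure of $\bigcup_n A_n$ in maximal norm, i.e.\ it is the Banach algebra inductive limit $\varinjlim A_n$. Then, since topological $K$-theory for Banach algebras is continuous with respect to such increasing unions (a standard fact proven by lifting $(\delta,N)$-idempotents and $(\delta,N)$-invertibles from the dense union, exactly as in the $C^*$-algebra case), one has
\begin{equation*}
K_i(\mathcal{K}^*(\ell^p(Z))) \;\cong\; \varinjlim_n K_i(M_n(\mathbb{C})).
\end{equation*}

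Finally, I would compute the right-hand side. For each $n$, $K_0(M_n(\mathbb{C})) = \mathbb{Z}$ generated by any rank-one idempotent and $K_1(M_n(\mathbb{C})) = 0$. The connecting map induced by the corner inclusion sends a rank-one idempotent to a rank-one idempotent, hence is the identity on $\mathbb{Z}$. Therefore $K_0(\mathcal{K}^*(\ell^p(Z))) = \mathbb{Z}$, generated by the class of any rank-one idempotent, and $K_1(\mathcal{K}^*(\ell^p(Z))) = 0$.

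The only mildly delicate point is the second step: one must justify that $K$-theory is continuous for this inductive system of Banach algebras in the $\|\cdot\|_{\max}$ topology. The easiest route is to argue directly with quasi-idempotents and quasi-invertibles: given an idempotent $e \in M_n(\mathcal{K}^*(\ell^p(Z))^+)$, Lemma \ref{Lemma:LpCompact} produces a nearby element in $M_n(A_k^+)$ for $k$ large, and holomorphic functional calculus (valid in any unital Banach algebra) converts it to an honest idempotent whose $K_0$-class coincides with $[e]$; a similar argument treats $K_1$ and homotopies. This is the standard continuity argument and is the only step requiring care; everything else is routine.
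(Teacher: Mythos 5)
Your proof is correct and follows essentially the same route as the paper: Lemma~\ref{Lemma:LpCompact} identifies $\mathcal{K}^*(\ell^p(Z))$ as the Banach-algebra inductive limit of the matrix algebras $F_n\mathcal{B}^*(\ell^p(Z))F_n\cong M_n(\mathbb{C})$, and continuity of $K$-theory then gives the computation. You spell out the continuity step more explicitly than the paper, but the underlying argument is the same.
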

\begin{proof}
     Lemma \ref{Lemma:LpCompact} implies that $\mathcal{K}^*(\ell^p(Z))$ can be represented as the direct limit of matrix algebras. By the continuous property of $K$-group, we complete the proof.
\end{proof}

    Let $X$ be a proper metric space, $p\in (1,\infty)$. Recall that an $L^p$-$X$-module is an $L^p$-space $E_X^p=\ell^p(Z_X)\otimes \ell^p=\ell^p(Z_X,\ell^p)$ equipped with a natural point-wise multiplication action of $C_0(X)$ by restricting to $Z_X$, where $Z_X$ is a countable dense subset in $X$.
    This action can naturally induce a morphism from $C_0(X)$ to $\mathcal B^\ast(E^p_X)$.

\begin{definition}
    Let $X,Y$ be proper metric spaces, $T\in\mathcal{B}^*(E^p_X,E^p_Y)$. The \textit{support} of $T$, denoted $\mathrm{supp}(T)$, consists of all points $(x,y)\in X\times Y$ such that $\chi_VT\chi_U\not=0$ for all open neighbourhoods $U$ of $x$ and $V$ of $y$.
\end{definition}

    We remark that $\mathrm{supp}(T)$ has the same properties like Remark \ref{Remark:support}.

\begin{definition}
    Let $X$ be proper metric spaces, $T$ be an element in $\mathcal{B}^*(E^p_X)$.\par
(1) The \textit{propagation} of $T$, is defined to be $\mathrm{prop}(T):=\sup\{d(x,y):(x,y)\in \mathrm{supp}(T)\}$; \par
(2) $T$ is said to be \textit{locally compact dual-operator} if $\chi_K T$ and $T \chi_K$ are in $\mathcal{K}^*(E^p_X)$ for any compact subset $K$ in $X$.
\end{definition}

\begin{definition}
The \textit{dual $L^p$ Roe algebra} of $E_X^p$, denoted $B^{p,\ast}(E_X^p)$, is defined to be the maximal-norm closure of the algebra of all locally compact dual-operators acting on $E_X^p$ with finite propagations.
\end{definition}

    Let $X,Y$ be two proper metric spaces, and $f$ be a continuous coarse map from $X$ to $Y$.
      Let $V_f$ and $V^+_f$  be an isometric dual-operator and a contractive dual-operator, respectively, constructed in Lemma \ref{Lemma:CoveringIsometry}. Thus we have the following lemma.
\begin{lemma}
    Let $f$, $E_X^p$ and $E_Y^p$ be as above. Then the pair $(V_f,V_f^+)$ gives rise to a homomorphism $\mathrm{ad}((V_f,V_f^+)):B^{p,\ast}(E_X^p)\rightarrow B^{p,\ast}(E_Y^p)$ defined by:
                        $$\mathrm{ad}((V_f,V_f^+))(T)=V_fTV_f^+$$
for all $T\in B^{p,\ast}(E_X^p)$.\par
Moreover, the map $\mathrm{ad}((V_f,V_f^+))_*$ induced by $\mathrm{ad}((V_f,V_f^+))$ on $K$-theory depends only on $f$ and not on the choice of pair $(V_f,V_f^+)$.
\end{lemma}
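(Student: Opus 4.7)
The plan is to mirror, line for line, the argument of Lemma \ref{Lemma:CoveringIsometryPair}, but with every norm replaced by the maximal norm and every invocation of compactness replaced by compactness as a dual-operator. Writing $\mathrm{ad}((V_f,V_f^+))(T)=V_fTV_f^+$, the first observation is that this is automatically a dual-operator whose dual is $(V_f^+)^\ast T^\ast V_f^\ast$; since the construction of $V_f$, $V_f^+$ from the block decomposition along the Borel partition $\{Y_i\}$ in Lemma \ref{Lemma:CoveringIsometry} produces operators whose transposes are manifestly bounded (the partition construction works symmetrically for $p$ and its conjugate exponent $q$), the pair $(V_f,V_f^+)$ consists of dual-operators. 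Consequently $\mathrm{ad}((V_f,V_f^+))$ is a contractive homomorphism with respect to $\|\cdot\|_{\max}$, so it suffices to show that the subalgebra of locally compact dual-operators with finite propagation is preserved.

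First I would handle finite propagation by repeating the support calculation from Lemma \ref{Lemma:CoveringIsometryPair} verbatim: the $\varepsilon$-control on $\mathrm{supp}(V_f)$ and $\mathrm{supp}(V_f^+)$ combined with Remark \ref{Remark:support}(3) (which holds for dual-operators since only the support calculus is used) bounds $d(y_1,y_2)\leq 2\varepsilon+d(f(x_1),f(x_2))$ for $(y_1,y_2)\in\mathrm{supp}(V_fTV_f^+)$, and coarseness of $f$ together with $\mathrm{prop}(T)<\infty$ yields a uniform bound. Next, for local compactness, given a compact $K\subseteq Y$ and setting $F=\mathrm{supp}(V_f)$, Remark \ref{Remark:support}(4) gives $\chi_K V_fTV_f^+=\chi_KV_f\chi_{F\circ K}TV_f^+$. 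Properness of $f$ makes $F\circ K$ compact in $X$, so $\chi_{F\circ K}T$ lies in $\mathcal{K}^\ast(E_X^p)$; the two-sided ideal property of $\mathcal{K}^\ast$ in $\mathcal{B}^\ast$ (which follows because both $T$ and $T^\ast$ being compact is preserved under composition with dual-operators on either side) then gives $\chi_KV_fTV_f^+\in\mathcal{K}^\ast(E_Y^p)$, and the argument for $V_fTV_f^+\chi_K$ is symmetric.

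For independence of the induced $K$-theory map, I would reuse the $2\times 2$ rotation trick verbatim. Given two pairs $(V_1,V_1^+)$ and $(V_2,V_2^+)$, form
\[
U=\begin{pmatrix} I-V_1V_1^+ & V_1V_2^+ \\ V_2V_1^+ & I-V_2V_2^+ \end{pmatrix},
\]
which lives in $M_2(\mathcal{B}^\ast(E_Y^p))$ and whose transpose is built from the transposes of the $V_i$ and $V_i^+$, so $U$ is itself a dual-operator of finite $\|\cdot\|_{\max}$ and indeed $U^2=I$. The same algebraic identity
\[
\begin{pmatrix}\mathrm{ad}((V_1,V_1^+))(T) & 0 \\ 0 & 0 \end{pmatrix}=U\begin{pmatrix} 0 & 0 \\ 0 & \mathrm{ad}((V_2,V_2^+))(T)\end{pmatrix}U
\]
as in Lemma \ref{Lemma:CoveringIsometryPair} then implements the desired conjugation, and by the standard $K$-theoretic argument the two induced maps agree.

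The main obstacle I anticipate is the bookkeeping around the dual condition: one must repeatedly check not just that $V_fTV_f^+$ and the auxiliary operators (the matrix $U$, the truncations $\chi_KV_f$, etc.) are bounded, but also that their transposes are bounded with comparable norm, and that the compact dual-operator ideal is closed under the one-sided multiplications used above. Everything else is a faithful translation of the $C^\ast$-algebraic proof, using Remark \ref{Remark:support} and the explicit block form of $(V_f,V_f^+)$ from Lemma \ref{Lemma:CoveringIsometry}; no new geometric input is needed.
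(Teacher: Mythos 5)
Your proposal is correct and follows essentially the same route as the paper: the paper's proof of this lemma is simply the one-line remark that it is identical to the proof of Lemma \ref{Lemma:CoveringIsometryPair}, and you have faithfully reproduced that argument while correctly identifying the only points that actually require new verification in the dual-operator setting (that $V_f$, $V_f^+$, and the conjugating matrix $U$ are themselves dual-operators, and that $\mathcal{K}^\ast$ is a two-sided ideal in $\mathcal{B}^\ast$). Your explicit bookkeeping of the transposes and maximal norms is slightly more detailed than what the paper writes, but it is exactly the content the paper's citation implicitly asks the reader to check.
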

\begin{proof}
    The proof of this lemma is same as the proof of Lemma \ref{Lemma:CoveringIsometryPair}.
\end{proof}

\begin{corollary}
    For different $L^p$-$X$-modules $E_X^p$ and $E_X'^{p}$, $B^{p,\ast}(E_X^p)$ is non-canonically isomorphic to $B^{p,\ast}(E_X'^{p})$, and $K_*(B^{p,\ast}(E_X^p))$ is canonically isomorphic to $K_*(B^{p,\ast}(E_X'^{p}))$.
\end{corollary}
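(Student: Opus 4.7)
The plan is to mimic the proof of the analogous corollary following Lemma~\ref{Lemma:CoveringIsometryPair}, invoking the dual-operator version of the covering isometry construction together with the preceding lemma for algebras of dual-operators. First I would apply the construction of Lemma~\ref{Lemma:CoveringIsometry} to the identity map $\mathrm{id}: X \to X$, regarded as going from $(X, E^p_X)$ to $(X, E'^p_X)$, choosing each local partition set $Y_i$ to have nonempty interior so that both $Y_i \cap Z_X$ and $Y_i \cap Z'_X$ are nonempty countable sets. This produces a pair $(V, V^+)$ of dual-operators with $V^+ V = I$ and arbitrarily small propagation; swapping the roles of the two modules gives a pair $(W, W^+)$ going the other way with $W^+ W = I$.

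For the (non-canonical) isomorphism of Banach algebras, I would refine the construction so that each local isometry $V_i : \ell^p(Y_i \cap Z_X) \otimes \ell^p \to \ell^p(Y_i \cap Z'_X) \otimes \ell^p$ is an isometric \emph{bijection}; this is possible because both sides are separable infinite-dimensional $L^p$-spaces (the tensor factor $\ell^p$ always contributes countably infinitely many dimensions). Then $V$ is an invertible isometric dual-operator with $V^+$ as its two-sided inverse, and the conjugation $T \mapsto V T V^+$ yields an algebra isomorphism $B^{p,\ast}(E^p_X) \cong B^{p,\ast}(E'^p_X)$. The non-canonicity reflects the arbitrary choices of the partition $\{Y_i\}$ and of the local bijections $V_i$.

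For the canonical $K$-theory isomorphism, the bijectivity is not needed: I would invoke the preceding lemma, which asserts that $\mathrm{ad}((V, V^+))_\ast$ is well-defined on $K$-theory and independent of the choice of pair covering a given coarse map. To see that this map is an isomorphism, note that the composition $\mathrm{ad}((W, W^+)) \circ \mathrm{ad}((V, V^+)) = \mathrm{ad}((WV, V^+ W^+))$ covers the identity map $X \to X$ on the module $E^p_X$, for which the trivial pair $(I, I)$ is also admissible and manifestly induces the identity on $K_\ast(B^{p,\ast}(E^p_X))$. The uniqueness part of the preceding lemma then forces both compositions $\mathrm{ad}((W, W^+))_\ast \circ \mathrm{ad}((V, V^+))_\ast$ and $\mathrm{ad}((V, V^+))_\ast \circ \mathrm{ad}((W, W^+))_\ast$ to equal the identity, so the induced $K$-theory maps are mutually inverse canonical isomorphisms. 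The main point requiring care is to verify that all operators $V, V^+, W, W^+$ and the relevant products are genuine dual-operators with bounded transposes, so that the support/propagation calculus of Remark~\ref{Remark:support} and the algebra structure of $B^{p,\ast}$ apply verbatim; this parallels the argument in the preceding lemma and should present no new difficulty.
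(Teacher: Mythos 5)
Your proof is correct and follows the natural route implied by the paper, which states this corollary without proof as an immediate consequence of the preceding lemma on $\mathrm{ad}((V_f,V_f^+))$: you apply the covering isometry construction to $f=\mathrm{id}$, observe the local isometries $V_i$ can be chosen to be (dual-operator) basis bijections between the two separable infinite-dimensional $\ell^p$-spaces $\ell^p(Y_i\cap Z_X)\otimes\ell^p$ and $\ell^p(Y_i\cap Z'_X)\otimes\ell^p$ to get the non-canonical algebra isomorphism, and use the uniqueness part of the lemma (comparing $(WV,V^+W^+)$ with $(I,I)$, both covering $\mathrm{id}$) to get the canonical $K$-theory isomorphism. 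The one detail worth recording, which you flag, is that all the operators involved are permutation-type operators whose transposes are again bounded, so they are genuine dual-operators and the propagation/local-compactness calculus applies in $B^{p,\ast}$ exactly as in $B^p$.
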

For convenience, we replace $B^{p,\ast}(E_X^p)$ by $B^{p,\ast}(X)$ representing the dual $L^p$ Roe algebra of $X$.

\begin{definition}
   Let $X$ be a proper metric space. The \textit{dual $L^p$ localization algebra} of $X$, denoted $B^{p,\ast}_L(X)$, is defined to be the norm closure of the algebra of all bounded and uniformly norm-continuous functions $f$ from $[0,\infty)$ to $B^{p,\ast}(X)$ such that
  \begin{center}
   prop($f(t)$) is uniformly finite and prop($f(t)$)$\rightarrow 0$ as $t\rightarrow \infty$.
  \end{center}
  The \textit{propagation} of $f$ is defined to be $\sup\{\mathrm{prop}(f(t)):t\in [0,\infty)\}$.
\end{definition}

    We have the following lemma for  dual $L^p$ localization algebra just like Lemma \ref{Lemma:IsometryPairHom}.
\begin{lemma}
    Let $X,Y$ be two proper metric spaces, $f$ be a uniformly continuous coarse map from $X$ to $Y$ and $\{\varepsilon_k\}_k$ be a sequence of positive numbers such that $\varepsilon_k \rightarrow 0$ as $k \rightarrow \infty$, then the pair $(V_f(t),V_f^+(t))$ constructed in Lemma \ref{Lemma:IsometryPairHom} induces a homomorphism $\mathrm{Ad}((V_f,V_f^+))$ from $B^{p,\ast}_L(X)$ to $B^{p,\ast}_L(Y)\otimes M_2(\mathbb{C})$ defined by:
$$\mathrm{Ad}((V_f,V_f^+))(u)(t)=V_f(t)(u(t)\oplus 0)V_f^+(t)$$
for any $u\in B^{p,\ast}_L(X)$ and $t\in [0,\infty)$, such that
  $$\mathrm{prop}(\mathrm{Ad}((V_f,V_f^+))(u)(t))\leq \sup_{(x,y)\in \mathrm{supp}(u(t))}\{d(f(x),f(y))\}+2\varepsilon_k + 2\varepsilon_{k+1}$$
for all $t\in [k,k+1]$.\par
Moreover, the map $\mathrm{Ad}((V_f,V_f^+))_*$ induced by $\mathrm{Ad}((V_f,V_f^+))$ on $K$-theory depends only on f and not on the choice of the pairs $(V_k,V_k^+)$ in the construction of $V_f(t)$ and $V_f^+(t)$.
\end{lemma}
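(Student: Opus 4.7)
The plan is to mirror the proof of Lemma \ref{Lemma:IsometryPairHom} step by step, replacing the $L^p$ Roe algebra $B^p_L$ by the dual $L^p$ Roe algebra $B^{p,\ast}_L$ throughout and checking at every stage that the operators produced are genuinely dual-operators (i.e., that their transposes are bounded and that compact is replaced by compact dual-operator). The structural outline has three parts: (i) show $\mathrm{Ad}((V_f,V_f^+))(u)$ actually lands in $B^{p,\ast}_L(Y)\otimes M_2(\C)$, (ii) establish the propagation estimate, (iii) verify the $K$-theoretic independence on the choice of the $(V_k,V_k^+)$.

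For (i), I would first observe that the blockwise construction in Lemma \ref{Lemma:CoveringIsometry} may be carried out so that each $V_i$ and $V_i^+$ is implemented by a $\{0,1\}$-indexing between the Schauder bases of the factors $\ell^p(f^{-1}(Y_i)\cap Z_X)\otimes \ell^p$ and $\ell^p(Y_i\cap Z_Y)\otimes \ell^p$; its transpose is then also bounded with the same support bounds, so $V_f(t),V_f^+(t)$ are dual-operators with $\|\cdot\|_{\max}\le 4$. Boundedness and uniform norm-continuity of $\mathrm{Ad}((V_f,V_f^+))(u)(\cdot)$ in the maximal norm then follow exactly as in Lemma \ref{Lemma:IsometryPairHom}, since $V_f(t),V_f^+(t)$ fail to be norm-continuous only at integer times but in between are smoothed by the rotations $R(t-k)$. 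For local compactness (in the dual sense), the argument of Lemma \ref{Lemma:CoveringIsometryPair} applies verbatim: by Remark \ref{Remark:support}\,(4) and properness of $f$, $\chi_K V_f(t)\chi_{F\circ K}$ is supported in a compact set, so the product $\chi_K V_f(t)(u(t)\oplus 0)V_f^+(t)$ factors through a locally compact piece of $u(t)$, and $\mathcal K^*$ is an ideal in $\mathcal B^*$.

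For (ii), the propagation bound is just the support calculation. Using Remark \ref{Remark:support}\,(3) for the diagonal and cross terms, one gets
\[
\mathrm{prop}(V_k u(t) V_k^+)\le \sup_{(x,y)\in\mathrm{supp}(u(t))} d(f(x),f(y)) + 2\varepsilon_k,
\]
\[
\mathrm{prop}(V_k u(t) V_{k+1}^+)\le \sup_{(x,y)\in\mathrm{supp}(u(t))} d(f(x),f(y)) + \varepsilon_k+\varepsilon_{k+1},
\]
and assembling via Remark \ref{Remark:propagation} yields the stated $2\varepsilon_k + 2\varepsilon_{k+1}$ error. Since $f$ is coarse this stays uniformly bounded, and since $f$ is uniformly continuous and $\varepsilon_k\to 0$, the propagation of $\mathrm{Ad}((V_f,V_f^+))(u)(t)$ tends to zero as $t\to\infty$, so we land in $B^{p,\ast}_L(Y)\otimes M_2(\C)$.

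For (iii), I would reuse the same similarity trick from Lemma \ref{Lemma:CoveringIsometryPair}: given two admissible pairs, form
\[
U(t) = \begin{pmatrix} I - V_1(t) V_1^+(t) & V_1(t) V_2^+(t) \\ V_2(t) V_1^+(t) & I - V_2(t) V_2^+(t) \end{pmatrix},
\]
which satisfies $U(t)^2 = I$ using only $V_i^+(t) V_i(t) = I$, and which conjugates $\mathrm{Ad}((V_1,V_1^+))(u)(t)\oplus 0$ into $0\oplus \mathrm{Ad}((V_2,V_2^+))(u)(t)$. The main technical obligation is to verify that $U(t)-I$, viewed as a function of $t$, lies in $B^{p,\ast}_L(Y)\otimes M_4(\C)$: its off-diagonal entries $V_1(t)V_2^+(t)$ and $V_2(t)V_1^+(t)$ inherit uniformly bounded propagation going to zero (from the support estimates) and, by the same local compactness argument as in (i), are locally compact dual-operators.

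The principal obstacle I expect is bookkeeping around the dual-operator compactness: $\mathcal{K}^\ast$ is strictly smaller than the space of norm-compact operators, and none of the standard $C^\ast$-style approximation arguments automatically transport across. In each place where Lemma \ref{Lemma:CoveringIsometryPair} invokes ``$\chi_K T$ is compact,'' I will need to verify that $\chi_K$ is itself a dual-operator (clear, since $\chi_K$ is idempotent and acts on all $\ell^p$ and $\ell^q$) and that composing with a locally compact dual-operator preserves the dual-operator condition on the transpose side, ultimately leaning on Lemma \ref{Lemma:LpCompact} to represent the relevant products as norm-maximal limits of finite-rank matrices. Once this check is done uniformly in $t$, both the well-definedness and the $K$-theoretic independence of $\mathrm{Ad}((V_f,V_f^+))_\ast$ follow as in the $L^p$ Roe algebra case.
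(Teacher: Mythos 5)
Your proposal is correct and takes essentially the same approach as the paper, whose own proof of this lemma is simply a reference back to Lemma \ref{Lemma:IsometryPairHom}; you are expanding that reference, and the points you flag (choosing the $V_i$, $V_i^+$ in Lemma \ref{Lemma:CoveringIsometry} as $\{0,1\}$-matrices so their transposes are bounded, checking that $\mathcal K^*$ is an ideal in $\mathcal B^*$, and verifying that the similarity $U(t)-I$ itself lies in the dual localization algebra) are exactly the places where the dual-operator condition must be tracked. No gap.
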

\begin{proof}
    The proof of this lemma is similar to the proof of Lemma \ref{Lemma:IsometryPairHom}.
\end{proof}

\begin{definition}
    The $i$-th \textit{dual $L^p$ $K$-homology} is defined to be $K_i(B^{p,\ast}_L(X))$.
\end{definition}

\subsection{Strongly Lipschitz homotopy invariance of (dual) $L^p$ $K$-homology}

    In this section, we will prove that (dual) $L^p$ $K$-homology is strongly Lipschitz homotopy invariant. In the following, we just discuss the case of dual $L^p$ $K$-homology; similarly, we can obtain the same result for $L^p$ $K$-homology.
\begin{lemma}\label{Lemma:LipschitzHomotopy}
    Let $f$ and $g$ be two Lipschitz maps from $X$ to $Y$, let $F(t,x)$ be a strongly Lipschitz homotopy connecting $f$ and $g$, then
              $$\mathrm{Ad}((V_f,V^+_f))_*=\mathrm{Ad}((V_g,V^+_g))_*: K_*(B^{p,\ast}_L(X)) \rightarrow K_*(B^{p,\ast}_L(Y))$$
\end{lemma}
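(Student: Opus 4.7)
The plan is to prove this $K$-homology statement by running the same Eilenberg swindle construction that produced the controlled homotopy of quasi-invertibles in Lemma \ref{Lemma:Homotopy}, but now directly on actual (quasi-)idempotents and (quasi-)invertibles representing classes in $K_\ast(B^{p,\ast}_L(X))$, so that the $\delta$-bookkeeping disappears and one obtains an honest homotopy of true idempotents or invertibles in a matrix ampliation of $B^{p,\ast}_L(Y)$. Since $E_Y^p = \ell^p(Z_Y)\otimes \ell^p$ absorbs countable multiplicities and $M_n(B^{p,\ast}_L(Y))$ has canonically isomorphic $K$-theory with $B^{p,\ast}_L(Y)$, the stabilisation built into the swindle causes no loss.

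First I would fix a sequence $\varepsilon_j \to 0$ and, using the equicontinuity of $F$ in $t$ (part (3) of the definition of strongly Lipschitz homotopy), choose a doubly-indexed array $\{t_{i,j}\}_{i,j\geq 0}\subseteq [0,1]$ satisfying the three conditions from the proof of Lemma \ref{Lemma:Homotopy}: $t_{0,j}=0$, eventually $t_{i,j}=1$, and both $d(F(t_{i+1,j},x),F(t_{i,j},x))$ and $d(F(t_{i,j+1},x),F(t_{i,j},x))$ are at most $\varepsilon_j$ for all $x$. For each intermediate map $f_{i,j}(x)=F(t_{i,j},x)$ Lemma \ref{Lemma:CoveringIsometry} provides an isometric dual-operator $V_{f_{i,j}}$ and a contractive dual-operator $V^+_{f_{i,j}}$ with supports controlled by $r/(1+i+j)$; the dual condition is preserved because the partition used in Lemma \ref{Lemma:CoveringIsometry} is coordinate-preserving at the level of the Schauder bases. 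Rotation-matrix interpolation between consecutive $j$'s, as in the construction just before Lemma \ref{Lemma:IsometryPairHom}, then produces families $V_i(t), V^+_i(t)$ with uniformly bounded maximal norm and finite propagation, giving homomorphisms $\mathrm{Ad}((V_i,V^+_i)):B^{p,\ast}_L(X)\to B^{p,\ast}_L(Y)\otimes M_2(\mathbb{C})$.

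Given $x\in B^{p,\ast}_L(X)$, set $x_i = \mathrm{Ad}((V_i,V^+_i))(x)$ with the boundary cases $x_0=\mathrm{Ad}((V_f,V^+_f))(x)$ and $x_\infty=\mathrm{Ad}((V_g,V^+_g))(x)$. Choosing the covering isometries consistently we can arrange that for each fixed $i$ one has $x_i(t)=x_\infty(t)$ whenever $t\le n_i$, where $n_i\to\infty$. Now I would mirror the block construction of the proof of Lemma \ref{Lemma:Homotopy}: define
\[
a=\bigoplus_{i=0}^\infty x_i,\qquad b=\bigoplus_{i=0}^\infty x_{i+1},\qquad c=x_\infty\oplus\bigoplus_{i=1}^\infty x_i,
\]
and splice together the three natural homotopies (rotation in the $(i,i+1)$-block giving $a\simeq b$; trivial reindexing giving $b\simeq c$; and the fact that $c$ equals $x_\infty\oplus a$ up to a shift) with the linear homotopy from $x_0\oplus\bigoplus I$ to $a$ and from $x_\infty\oplus\bigoplus I$ to $c$. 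The propagation of every operator occurring stays finite and tends to $0$ as the localisation parameter $t\to\infty$, by the coarseness of $F$ plus the small $\varepsilon_j$, so the whole homotopy lives in $B^{p,\ast}_L(Y)\otimes\mathcal{K}^\ast(\ell^p)$ (after the canonical absorption of the infinite direct sum into one $\ell^p$-factor of $E_Y^p$).

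Applying this swindle separately to representatives of $K_0$-classes (quasi-idempotents, exactly as in the parallel $QP$-version implicit in Lemma \ref{Lemma:Homotopy}) and to representatives of $K_1$-classes (quasi-invertibles) yields the equality of the induced maps on $K_\ast(B^{p,\ast}_L(X))\to K_\ast(B^{p,\ast}_L(Y))$. The main technical obstacle is ensuring that every operator appearing in the swindle is genuinely a dual-operator and that its \emph{maximal} norm is controlled, since $B^{p,\ast}_L$ uses $\|\cdot\|_{\max}$ rather than the operator norm; this amounts to checking that the isometric covering operators $V_{f_{i,j}}$ and their rotational combinations admit transposed bounded actions and that taking transposes commutes with all the block-matrix manipulations in the swindle, both of which follow from the coordinate-preserving nature of the covering construction of Lemma \ref{Lemma:CoveringIsometry}.
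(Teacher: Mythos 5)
Your overall plan — reuse the triple array $\{t_{i,j}\}$, form covering isometries $V_{f_{i,j}},V^+_{f_{i,j}}$ with rotation interpolation, and run an Eilenberg swindle over an infinite direct sum absorbed back into one copy of $E^p_Y$, doing $K_1$ directly and $K_0$ by suspension — is exactly what the paper does, and the closing remark that one must verify dual-operator boundedness in the $\|\cdot\|_{\max}$ norm is the right technical check. However, there is a genuine gap in the way you set up the swindle. You define
\[
a=\bigoplus_{i\ge 0} x_i,\qquad b=\bigoplus_{i\ge 0} x_{i+1},\qquad c=x_\infty\oplus\bigoplus_{i\ge 1} x_i,
\]
where $x_i=\mathrm{Ad}((V_i,V^+_i))(x)$. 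Because $x_i(t)$ agrees with $x_\infty(t)$ for $t\le n_i$ (not with $I$), the entries $x_i-I$ do not tend to $0$ as $i\to\infty$; for each fixed $t$ there are infinitely many summands of norm roughly $\|x_\infty(t)-I\|$. Consequently $\bigoplus(x_i-I)$ is not locally compact and $a,b,c$ are not elements of $(B^{p,\ast}_L(Y)\otimes M_n(\C))^+$ after the absorption. Likewise the ``linear homotopy from $x_0\oplus\bigoplus I$ to $a$'' moves infinitely many coordinates simultaneously, is not controlled in norm, and need not stay invertible. In short, the swindle as you state it leaves the algebra.

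The fix is precisely what the paper (and the proof of Lemma \ref{Lemma:Homotopy} that you cite) does: pass to the multiplicative difference $w_i(t)=u_i(t)u_\infty^{-1}(t)$ before taking direct sums. Then $w_i(t)=I$ for $t\le n_i$ with $n_i\to\infty$, so for each $t$ only finitely many $w_i(t)-I$ are nonzero, the propagations shrink, and
\[
a=\bigoplus_{i\ge 0}(w_i\oplus I),\quad b=\bigoplus_{i\ge 0}(w_{i+1}\oplus I),\quad c=(I\oplus I)\oplus\bigoplus_{i\ge 1}(w_i\oplus I)
\]
are genuine invertibles in the unitalized algebra. The $K_1$-identification $a\sim b\sim c$ then yields $[u_0u_\infty^{-1}]=0$, i.e.\ $[u_0]=[u_\infty]$, which is the claim. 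Without this normalization step the construction does not produce valid algebra elements, so the argument does not go through as written.

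Also note a small bookkeeping slip: your ``$c$ equals $x_\infty\oplus a$ up to a shift'' is false ($x_\infty\oplus a$ keeps $x_0$, while $c$ drops it); with the corrected $w_i$'s the analogous identification $c=(I\oplus I)\oplus b$ (reindexing $b$) is what makes the swindle close up.
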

\begin{proof}
    We just prove this lemma for $K_1$ group, and by suspension, we can obtain the same result for $K_0$ group.
    Choose $\{t_{i,j}\}_{i\geq 0,j\geq 0} \subseteq [0,1]$ satisfying \par
(1) $t_{0,j}=0, t_{i,j+1}\leq t_{i,j}, t_{i+1,j}\geq t_{i,j}$;\par
(2) there exists $N_j\rightarrow \infty$ such that $t_{i,j}=1$ for all $i\geq N_j$ and $N_{j+1}\geq N_j$; \par
(3) $d(F(t_{i+1,j},x),F(t_{i,j},x))<\varepsilon_j=1/(j+1), d(F(t_{i,j+1},x),F(t_{i,j},x))<\varepsilon_j$ for all $x\in X$.\par
    Let $f_{i,j}(x)=F(t_{i,j},x)$, by Lemma \ref{Lemma:CoveringIsometry}, there exist an isometric operator $V_{f_{i,j}}:E_X^p\rightarrow E_Y^p$ and a contractive operator $V^+_{f_{i,j}}:E_Y^p\rightarrow E_X^p$ with $V^+_{f_{i,j}}V_{f_{i,j}}=I$ such that
              $$\mathrm{supp}(V_{f_{i,j}}) \subseteq \{(x,y)\in X\times Y:d(f_{i,j}(x),y)<1/(1+i+j)\};$$
              $$\mathrm{supp}(V^+_{f_{i,j}}) \subseteq\{(y,x)\in Y\times X:d(f_{i,j}(x),y)<1/(1+i+j)\}.$$
    For each $i>0$, define a family of operators $V_i(t)(t\in [0,\infty))$ from $E^p_X\oplus E^p_X$ to $E^p_Y\oplus E^p_Y$ and a family of operators $V^+_i(t)(t\in [0,\infty))$ from $E^p_Y\oplus E^p_Y$ to $E^p_X\oplus E^p_X$ by
              $$V_i(t)=R(t-j)(V_{f_{i,j}}\oplus V_{f_{i,j+1}})R^*(t-j), t\in[j,j+1]$$
              $$V^+_i(t)=R(t-j)(V^+_{f_{i,j}}\oplus V^+_{f_{i,j+1}})R^*(t-j), t\in[j,j+1]$$
where
                  $$R(t)=\begin{pmatrix}
                          \cos(\pi t/2)  & \sin(\pi t/2)\\
                          -\sin(\pi t/2) & \cos(\pi t/2)
                         \end{pmatrix}.$$
For any $[u]\in K_1(B^{p,\ast}_L(X))$, consider:
    $$u_0(t)=\mathrm{Ad}((V_f,V^+_f))(u)=V_f(t)(u(t)\oplus I)V^+_f(t)+(I-V_f(t)V^+_f(t));$$
    $$u_{\infty}(t)=\mathrm{Ad}((V_g,V^+_g))(u)=V_g(t)(u(t)\oplus I)V^+_g(t)+(I-V_g(t)V^+_g(t));$$
    $$u_i(t)=\mathrm{Ad}((V_i,V^+_i))(u)=V_i(t)(u(t)\oplus I)V^+_i(t)+(I-V_i(t)V^+_i(t)),$$
For each $i$, define $n_i$ by
                       $$n_i=\left\{
                               \begin{array}{lr}
                               \max\{j:i\geq N_j\}, &  \{j:i\geq N_j\}\not= \emptyset;\\
                               0,                  &    \{j:i\geq N_j\}= \emptyset.
                               \end{array}
                               \right. $$
We can choose $V_{f_{i,j}}$ in such a way that: $u_i(t)=u_{\infty}$ where $t\leq n_i$. \par
Define
                   $$w_i(t)=u_i(t)(u^{-1}_{\infty}(t))$$
Consider
                  \begin{align*}
                   a&=\bigoplus^{\infty}_{i=0}(w_i\oplus I);\\
                   b&=\bigoplus^{\infty}_{i=0}(w_{i+1}\oplus I);\\
                   c&=(I\oplus I)\bigoplus^{\infty}_{i=1}(w_i\oplus I).
                  \end{align*}
By the construction of $\{t_{i,j}\}$, we know that $a,b,c\in (B^{p,\ast}_L(X) \otimes M_2(\mathbb{C}))^+$. It is not difficult to see that $a$ is equivalent to $b$ and $b$ is equivalent to $c$ in $K_1(B^{p,\ast}_L(X))$. Thus $u_0u^{-1}_{\infty} \oplus_{i\geq 1} I$ is equivalent to $\oplus_{i\geq 0} I$ in $K_1(B^{p,\ast}_L(X))$. This means that $\mathrm{Ad}((V_f,V^+_f))_*=\mathrm{Ad}((V_g,V^+_g))_*$.
\end{proof}

\begin{corollary}
If $X$ is strongly Lipschitz homotopy equivalent to $Y$, then they have the same (dual) $L^p$ K-homology.
\end{corollary}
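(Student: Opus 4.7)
The plan is to deduce the corollary directly from the previous lemma by a standard functoriality argument. Suppose $X$ is strongly Lipschitz homotopy equivalent to $Y$, so there are proper Lipschitz maps $f:X\to Y$ and $g:Y\to X$ with $gf\simeq \mathrm{id}_X$ and $fg\simeq \mathrm{id}_Y$ via strongly Lipschitz homotopies. My goal is to show that $\mathrm{Ad}((V_f,V_f^+))_*$ and $\mathrm{Ad}((V_g,V_g^+))_*$ are mutually inverse isomorphisms on $K_*(B_L^{p,\ast}(\cdot))$ (and analogously for the non-dual $L^p$ version).

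First, I would verify functoriality: $\mathrm{Ad}((V_g,V_g^+))_* \circ \mathrm{Ad}((V_f,V_f^+))_* = \mathrm{Ad}((V_{gf},V_{gf}^+))_*$. The composition $(V_g(t)V_f(t), V_f^+(t)V_g^+(t))$ is itself a valid pair of covering isometries for the composite $gf$ (up to an enlargement of the supporting neighborhoods controlled by the Lipschitz constant of $g$ and the sequence $\varepsilon_k$). The independence of the induced $K$-theory map from the choice of covering pair, already established in the analogue of Lemma \ref{Lemma:IsometryPairHom} for the dual algebra, then forces the two maps to agree on $K$-theory. Similarly for $\mathrm{Ad}((V_f,V_f^+))_* \circ \mathrm{Ad}((V_g,V_g^+))_* = \mathrm{Ad}((V_{fg},V_{fg}^+))_*$.

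Next, I would invoke Lemma \ref{Lemma:LipschitzHomotopy}: since $gf$ is strongly Lipschitz homotopic to $\mathrm{id}_X$, the induced maps on $K_*(B_L^{p,\ast}(X))$ coincide. But $\mathrm{Ad}((V_{\mathrm{id}_X}, V_{\mathrm{id}_X}^+))_*$ is the identity (using the trivial choice $V_{\mathrm{id}_X}=I$). Therefore $\mathrm{Ad}((V_g,V_g^+))_* \circ \mathrm{Ad}((V_f,V_f^+))_* = \mathrm{id}$ on $K_*(B_L^{p,\ast}(X))$, and symmetrically $\mathrm{Ad}((V_f,V_f^+))_* \circ \mathrm{Ad}((V_g,V_g^+))_* = \mathrm{id}$ on $K_*(B_L^{p,\ast}(Y))$. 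Consequently, $\mathrm{Ad}((V_f,V_f^+))_*: K_*(B_L^{p,\ast}(X)) \to K_*(B_L^{p,\ast}(Y))$ is an isomorphism, proving that the dual $L^p$ $K$-homology groups of $X$ and $Y$ agree. The same argument, with Lemma \ref{Lemma:IsometryPairHom} and the non-dual version of Lemma \ref{Lemma:LipschitzHomotopy}, works verbatim for the $L^p$ $K$-homology.

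The only subtle point is the functoriality statement, which is mildly annoying because the pair $(V_g V_f, V_f^+ V_g^+)$ does not literally satisfy the support condition for $gf$ with the original $\varepsilon_k$'s but rather with an enlarged tolerance; however, this is absorbed by the freedom of choice in the covering pair combined with the uniqueness of the induced $K$-theory map. No substantial new idea is required beyond what is already developed in the preceding subsection.
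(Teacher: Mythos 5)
Your proof is correct and follows exactly the standard functoriality argument that the paper leaves implicit (the corollary is stated without proof as an immediate consequence of Lemma~\ref{Lemma:LipschitzHomotopy}). Your handling of the subtle point about the composed covering pair is appropriate: the independence of the induced $K$-theory map from the choice of pair, established in (the dual analogue of) Lemma~\ref{Lemma:IsometryPairHom} via the same $2\times 2$ conjugation trick as in Lemma~\ref{Lemma:CoveringIsometryPair}, indeed absorbs the mismatch in the supporting tolerances.
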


\subsection{Cutting and pasting of the (dual) $L^p$ $K$-homology}

    Let $X$ be a simplicial complex endowed with the $\ell^1$-metric, and let $X_1$ be a simplicial subcomplex of $X$. For $p\in (1,\infty)$ define $B^{p,\ast}_L(X_1;X)$ to be the closed subalgebra of $B^{p,\ast}_L(X)$ generated by all elements $f$ such that there exists $c_t>0$ satisfying $\lim _{t\rightarrow \infty} c_t=0$ and $\mathrm{supp}(f(t)) \subset \{(x,y)\in X \times X: d((x,y),X_1 \times X_1)\leq c_t\}$ for all $t\in [0,\infty)$.
\begin{lemma}\label{Lemma:LocalizationIndepStar}
    The inclusion homomorphism $i$ from $B^{p,\ast}_L(X_1)$ to $B^{p,\ast}_L(X_1;X)$ induces an isomorphism from $K_*(B^{p,\ast}_L(X_1))$ to $K_*(B^{p,\ast}_L(X_1;X))$.
\end{lemma}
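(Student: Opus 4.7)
The plan is to construct a $K$-theory inverse to $i_\ast$ by exploiting the fact that $X_1$ is a simplicial subcomplex of $X$ equipped with the $\ell^1$-metric, which supplies a canonical Lipschitz retraction $r_c : N_c(X_1)\to X_1$ for each sufficiently small $c>0$: in barycentric coordinates on each simplex, zero out the coordinates at vertices outside $X_1$ and renormalize. This is well-defined and $L$-Lipschitz uniformly in $c$ whenever $c<1/2$, and the linear homotopy on each simplex gives a strongly Lipschitz homotopy $H_c$ between $\iota\circ r_c$ and $\mathrm{id}_{N_c(X_1)}$, where $\iota:X_1\hookrightarrow X$ is the inclusion.

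First I would fix the module data. Applying Lemma \ref{Lemma:CoveringIsometry} to $\iota$ with a null sequence $\{\varepsilon_k\}$ produces dual-operators $V:E^p_{X_1}\to E^p_X$ and $V^+:E^p_X\to E^p_{X_1}$ with $V^+V=I$ and supports in an $\varepsilon_k$-neighborhood of the graph of $\iota$. Assembling these as in the construction before Lemma \ref{Lemma:IsometryPairHom} gives a pair $(V(t),V^+(t))$ whose $\mathrm{Ad}$-homomorphism realizes $i$ on the level of dual localization algebras.

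Next, for $a\in B^{p,\ast}_L(X_1;X)$ with $\mathrm{supp}(a(t))\subseteq\{d(\cdot,X_1\times X_1)\leq c_t\}$ and $c_t\to 0$, I would define the candidate inverse $\psi$ as follows. Once $c_t$ is small enough, choose an isometry $W_t$ covering $r_{c_t}$ via Lemma \ref{Lemma:CoveringIsometry} applied to $r_{c_t}:N_{c_t}(X_1)\to X_1$, and set $\psi(a)(t):=W_t a(t) W_t^+$, extended in a bounded norm-continuous way for the finitely many small $t$ where $c_t$ is large. Because $r_{c_t}$ is $L$-Lipschitz, one has $\mathrm{prop}(\psi(a)(t))\leq L\cdot\mathrm{prop}(a(t))+2\varepsilon_t$, which tends to $0$, so $\psi(a)\in B^{p,\ast}_L(X_1)$; deviations from multiplicativity, local compactness, and norm-continuity are controlled by $c_t+\varepsilon_t+\mathrm{prop}(a(t))$ and vanish asymptotically, so $\psi$ induces a well-defined map on $K$-theory.

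The identity $\psi_\ast\circ i_\ast=\mathrm{id}$ follows immediately from $r_c\circ\iota=\mathrm{id}_{X_1}$ together with Lemma \ref{Lemma:LipschitzHomotopy}. For $i_\ast\circ\psi_\ast=\mathrm{id}$, the key observation is that every element of $B^{p,\ast}_L(X_1;X)$ is eventually supported inside $N_{c_t}(X_1)\times N_{c_t}(X_1)$, so $H_{c_t}$ may be run in the asymptotic swindle of Lemma \ref{Lemma:LipschitzHomotopy} to produce a homotopy between $i\circ\psi(a)$ and $a$ within $B^{p,\ast}_L(X_1;X)$. The main obstacle is that $\psi$ is not a genuine algebra homomorphism on the nose, since $r_c$ is only partially defined and the covering isometries $W_t$ depend on $t$; one must verify that the multiplicative errors are asymptotically negligible and that continuity in $t$ is achievable by choosing $c_t$ and $W_t$ compatibly. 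This is also where the $\ell^1$-simplicial hypothesis is essential, as it supplies both the uniform Lipschitz control of $r_c$ and the explicit simplex-wise linear homotopy $H_c$.
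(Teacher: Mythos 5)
The geometric ingredient you identify (the barycentric retraction $r_c : N_c(X_1)\to X_1$ together with the straight-line homotopy, uniformly Lipschitz in $c<1/2$) is exactly the structure the paper invokes when it asserts that $B_{\varepsilon_0}(X_1)$ is strongly Lipschitz homotopy equivalent to $X_1$; so the geometric content matches. However, there is a genuine gap in how you propagate this to $K$-theory. Your candidate inverse $\psi(a)(t)=W_t\,a(t)\,W_t^+$ is not an algebra homomorphism, and it is not even obviously a well-defined map into $B^{p,\ast}_L(X_1)$: the covering isometries $W_t$ produced by Lemma~\ref{Lemma:CoveringIsometry} are not norm-continuous in $t$ (one needs the rotation-interpolated construction preceding Lemma~\ref{Lemma:IsometryPairHom} to fix this), and more seriously, a generic $a\in B^{p,\ast}_L(X_1;X)$ has $c_t$ large for small $t$, so at those $t$ there is no retraction $r_{c_t}$ available at all. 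You acknowledge the multiplicative errors but then assert that ``asymptotically vanishing'' errors force a well-defined map on $K$-theory. That is not a free lunch: the error terms need not vanish at finite $t$, so $\psi(ab)$ and $\psi(a)\psi(b)$ differ by a nonzero element of $B^{p,\ast}_L(X_1)$, and two elements differing by a function tending to $0$ as $t\to\infty$ do not automatically represent the same $K$-theory class.

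The paper sidesteps this entirely with a reparameterization trick: a cycle $a$ is homotopic in $B^{p,\ast}_L(X_1;X)$ to its time-shift $a(\cdot+t_0)$ via $s\mapsto a(\cdot+st_0)$ (uniform norm-continuity of $a$ makes this a genuine homotopy), and for $t_0$ large enough the shifted cycle has support inside $B_{\varepsilon_0}(X_1)\times B_{\varepsilon_0}(X_1)$ at \emph{all} times $t$, not just asymptotically. Once the cycle lives honestly over $B_{\varepsilon_0}(X_1)$, the single fixed pair $(V_r,V_r^+)$ covering the retraction $r:B_{\varepsilon_0}(X_1)\to X_1$ gives a bona fide algebra homomorphism $\mathrm{Ad}((V_r,V_r^+)):B^{p,\ast}_L(B_{\varepsilon_0}(X_1))\to B^{p,\ast}_L(X_1)$ (the dual version of Lemma~\ref{Lemma:IsometryPairHom}), and Lemma~\ref{Lemma:LipschitzHomotopy} identifies its $K$-theory class with that of $a$. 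Injectivity is analogous, and $K_0$ follows by suspension. To repair your proposal you should replace the ill-defined global $\psi$ by this time-shift step, after which your barycentric retraction and homotopy $H_c$ do all the remaining work.
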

\begin{proof}
    For any $\varepsilon>0$, let $B_{\varepsilon}(X_1)=\{x \in X : d(x,X_1) \leq \varepsilon\}$. There exists a small $\varepsilon_0>0$ such that $B_{\varepsilon_0}(X_1)$ is strongly Lipschitz homotopy equivalent to $X_1$. Any element in $K_1(B^{p,\ast}_L(X_1;X))$ can be represented by an invertible element $a\in (B^{p,\ast}_L(X_1;X))^+$ such that $a=a'+I$ and there exists $c_t>0$ satisfying $\lim_{t\rightarrow \infty} c_t=0$, $\mathrm{supp}(a'(t))\subset \{(x,y)\in X \times X: d((x,y),X_1 \times X_1)\leq c_t\}$. Uniform continuity of $a(t)$ implies that $a(t+st_0)(s\in [0,1])$ is norm continuous in $s$ for all $t_0>0$. Thus $[a(t)]$ is equivalent to $[a(t+st_0)]$ in $K_1(B^{p,\ast}_L(X_1;X))$ for any $t_0$. We can choose $t_0$ large enough so that $\mathrm{supp}(a'(t+t_0)) \subset B_{\varepsilon_0}(X_1) \times B_{\varepsilon_0}(X_1)$ for all $t$. By Lemma \ref{Lemma:LipschitzHomotopy}, we know that $i_*$ is surjective. \par
    A similar argument can be used to show that $i_*$ is injective. The case for $K_0$ can be similarly dealt with by a suspension argument.
\end{proof}

\begin{lemma}
    Let $X$ be a simplicial complex endowed with the $\ell^1$-metric, let $X_1,X_2$ be its two simplicial subcomplexes. We have the following six term exact sequence:
    \begin{footnotesize}
    \[
    \begin{CD}
    K_0(B^{p,\ast}_L(X_1 \cap X_2))   @>>>   K_0(B^{p,\ast}_L(X_1))\oplus K_0(B^{p,\ast}_L(X_2))   @>>>    K_0(B^{p,\ast}_L(X_1 \cup X_2))\\
             @AAA                                   & &                                                   @VVV     \\
    K_1(B^{p,\ast}_L(X_1 \cup X_2))   @<<<   K_1(B^{p,\ast}_L(X_1))\oplus K_0(B^{p,\ast}_L(X_2))   @<<<    K_1(B^{p,\ast}_L(X_1 \cap X_2))
    \end{CD}
    \]
    \end{footnotesize}
\end{lemma}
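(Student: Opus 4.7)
The plan is to realize the sequence as the standard Mayer--Vietoris six term sequence attached to a pullback square of Banach subalgebras inside a single ambient algebra. Set $X = X_1 \cup X_2$ and put $A = B^{p,\ast}_L(X)$, $A_i = B^{p,\ast}_L(X_i; X)$ for $i = 1, 2$, and $A_{12} = B^{p,\ast}_L(X_1 \cap X_2; X)$. Lemma \ref{Lemma:LocalizationIndepStar} gives canonical isomorphisms $K_*(A_i) \cong K_*(B^{p,\ast}_L(X_i))$ and similarly for $A_{12}$, so it is enough to produce a Mayer--Vietoris six term sequence for the quadruple $(A, A_1, A_2, A_{12})$.

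The two required verifications are (i) $A_1 \cap A_2 = A_{12}$ and (ii) $A_1 + A_2$ is dense in $A$. The inclusion $A_{12} \subseteq A_1 \cap A_2$ is immediate from the definitions. For the reverse, given $f \in A_1 \cap A_2$, its support $\mathrm{supp}(f(t))$ lies within some $c_t \to 0$ of both $X_1 \times X_1$ and $X_2 \times X_2$. A direct simplex-by-simplex calculation in the $\ell^1$-path metric shows that for simplicial subcomplexes $X_1, X_2$ of $X$ there exists a constant $C$ (depending only on $X$) such that any point within distance $c$ of both $X_1$ and $X_2$ lies within $Cc$ of $X_1 \cap X_2$; hence $\mathrm{supp}(f(t))$ is within $Cc_t$ of $(X_1 \cap X_2) \times (X_1 \cap X_2)$ and $f \in A_{12}$.

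For (ii) it suffices to decompose a dense class of generators. Given $g \in A$ with $g(t)$ locally compact and $\mathrm{prop}(g(t)) \to 0$, set
\begin{align*}
g_1(t) &= \chi_{X_1} g(t) \chi_{X_1} + \chi_{X_1} g(t) \chi_{X_2 \setminus X_1} + \chi_{X_2 \setminus X_1} g(t) \chi_{X_1},\\
g_2(t) &= \chi_{X_2 \setminus X_1} g(t) \chi_{X_2 \setminus X_1}.
\end{align*}
Multiplication by the characteristic function of a Borel set is a norm-one dual-operator and preserves local compactness and finite propagation, so each $g_i \in B^{p,\ast}_L(X)$. In every summand of $g_1(t)$ at least one coordinate lies in $X_1$, and the two coordinates are within $\mathrm{prop}(g(t))$ of each other, so $\mathrm{supp}(g_1(t))$ lies within $\mathrm{prop}(g(t))$ of $X_1 \times X_1$; hence $g_1 \in A_1$. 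Similarly $g_2 \in A_2$, and $g = g_1 + g_2$.

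With (i) and (ii) established, I would apply the standard Mayer--Vietoris machinery for Banach algebras (as in the short exact sequence $0 \to A_{12} \to A_1 \oplus A_2 \to A \to 0$ coming from the pullback square) to obtain the six term $K$-theory sequence, which after composition with the isomorphisms supplied by Lemma \ref{Lemma:LocalizationIndepStar} is exactly the claimed sequence. The main technical obstacle is step (ii): the $C^*$-algebraic version of Higson--Roe--Yu exploits square roots of a partition of unity, which are unavailable in the Banach setting, so one is forced to use the asymmetric $\chi_{X_1}$/$\chi_{X_2 \setminus X_1}$ decomposition above and to rely on the propagation hypothesis to push the ``cross terms'' into $A_1$. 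The other potential worry is whether the density of $A_1 + A_2$ in $A$ is genuinely sufficient for the six term sequence in Banach-algebra $K$-theory; this is handled by observing that the decomposition above is actually exact on the dense subalgebra of generators, so the pullback square is split at the level of these generators and standard excision applies.
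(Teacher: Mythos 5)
Your approach is essentially the same as the paper's: the paper also passes to $B^{p,\ast}_L(X_i;Y)$ inside $B^{p,\ast}_L(Y)$, checks that they are ideals with sum equal to $B^{p,\ast}_L(Y)$ and intersection equal to $B^{p,\ast}_L(X_1\cap X_2;Y)$, applies Banach-algebra Mayer--Vietoris, and then substitutes using Lemma \ref{Lemma:LocalizationIndepStar}. Where you add value is in filling in the two set-theoretic identities that the paper treats as immediate, in particular the explicit cross-term decomposition $g=g_1+g_2$ used for $A_1+A_2=A$, which correctly replaces the square-root-of-a-partition-of-unity trick that is unavailable outside the $C^\ast$ world.

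Three points to tighten. First, you describe $A_1,A_2$ only as ``closed subalgebras,'' but for the Mayer--Vietoris machinery to run you need them to be closed \emph{ideals} of $A$, as the paper notes; this is checked by the same propagation estimate you use for $g_1$: if $f\in A_1$ and $g\in A$ is a generator, then $\mathrm{supp}((gf)(t))$ lies within $c_t+\mathrm{prop}(g(t))$ of $X_1\times X_1$. Second, the display ``$0\to A_{12}\to A_1\oplus A_2\to A\to 0$'' is not a short exact sequence of Banach algebras, since $(a_1,a_2)\mapsto a_1-a_2$ is not multiplicative; the Mayer--Vietoris sequence is obtained by pasting the six-term sequences of the ideal extensions $0\to A_{12}\to A_1\to A_1/A_{12}\to 0$ and $0\to A_2\to A\to A/A_2\to 0$ via the excision isomorphism $A_1/A_{12}\cong A/A_2$, which is exactly where $A_1+A_2=A$ and $A_1\cap A_2=A_{12}$ enter. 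Third, the worry at the end about density being insufficient is moot: the maps $g\mapsto\chi_Ug\chi_V$ are bounded linear operators on $B^{p,\ast}_L(X)$ (they preserve local compactness, propagation, and uniform norm-continuity, and are contractive in the maximal norm), so the decomposition $g=g_1+g_2$ makes sense for every $g\in A$, lands in $A_1\oplus A_2$ because these are closed, and therefore gives $A_1+A_2=A$ exactly, not just densely.
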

\begin{proof}
    Let $Y=X_1 \cup X_2$, observe that $B^{p,\ast}_L(X_1;Y)$ and $B^{p,\ast}_L(X_2;Y)$ are ideals of $B^{p,\ast}_L(Y)$ such that $B^{p,\ast}_L(X_1;Y)+B^{p,\ast}_L(X_2;Y)=B^{p,\ast}_L(Y)$. And $B^{p,\ast}_L(X_1;Y) \cap B^{p,\ast}_L(X_2;Y)=B^{p,\ast}_L(X_1 \cap X_2;Y)$ since $(X_1,X_2)$ is the strong excision pair of $Y$ (Definition \ref{Def:ControlledBoundary}). Then by the Mayer-Vietoris sequence for $K$-theory of Banach algebras and Lemma \ref{Lemma:LocalizationIndepStar}, we can obtain this lemma.
\end{proof}

\begin{remark}
    By similar argument as above, we have the following six term exact sequence for $L^p$ localization algebra:
\begin{small}
    \[
    \begin{CD}
    K_0(B^p_L(X_1 \cap X_2))   @>>>   K_0(B^p_L(X_1))\oplus K_0(B^p_L(X_2))   @>>>    K_0(B^p_L(X_1 \cup X_2))\\
             @AAA                                   & &                                     @VVV     \\
    K_1(B^p_L(X_1 \cup X_2))   @<<<   K_1(B^p_L(X_1))\oplus K_0(B^p_L(X_2))   @<<<    K_1(B^p_L(X_1 \cap X_2))
    \end{CD}
    \]
\end{small}
\end{remark}

\subsection{Main result and proof}
    Let $X$ be a finite dimensional simplicial complex endowed with $\ell^1$-metric. Recall that $E^p_X$ is the $L^p$-$X$-module and $\mathcal{B}^*(E^p_X)$ is the Banach algebra of all dual-operators on $E_X^p$ for $p\in (1,\infty)$.  Every $T\in \mathcal{B}^*(E^p_X)$ can be viewed as an element in $B(E^p_X)$. Thus the inclusion map induces a contractive homomorphism
                           $$\phi:B^{p,\ast}_L(X)\rightarrow B^p_L(X)$$ \par

    Next we use the Riesz-Thorin interpolation theorem to build a connection between the dual $L^p$ localization algebra and the localization $C^*$-algebra. Firstly, let us recall this interpolation theorem.
\begin{lemma} [Riesz-Thorin]
    Let $(X,\mu)$ and $(Y,\nu)$ be two measure spaces. Let $T$ be a linear operator defined on the set of all simple functions on $X$ and taking values in the set of measurable functions on $Y$. Let $1 \leq p_0,p_1,q_0,q_1 \leq \infty$ and assume that
                          $$||T(f)||_{L^{q_0}}\leq M_0||f||_{L^{p_0}},$$
                          $$||T(f)||_{L^{q_1}}\leq M_1||f||_{L^{p_1}},$$
for all simple functions $f$ on $X$. Then for all $0<\theta<1$ we have
                          $$||T(f)||_{L^{q'}}\leq M^{1-\theta}_0M^{\theta}_1||f||_{L^{p'}}$$
for all simple functions $f$ on $X$, where $1/p'=(1-\theta)/p_0+\theta/p_1$ and $1/q'=(1-\theta)/q_0+\theta/q_1$.\\
By density, $T$ has a unique extension as a bounded operator from $L^{p'}(X,\mu)$ to $L^{q'}(Y,\nu)$.
\end{lemma}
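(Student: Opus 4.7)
The plan is to use the standard complex-analytic proof based on Hadamard's three-lines theorem, adapted to the setting where $T$ is only defined on simple functions. Since the statement is about simple functions $f$, I may assume $f$ takes only finitely many nonzero values, so $Tf$ is a fixed measurable function and all quantities appearing below are well-defined.

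First I would reduce the $L^{q'}$-norm estimate to a bilinear estimate by duality: it suffices to show that
\[
\left| \int_Y (Tf)\,g \, d\nu \right| \le M_0^{1-\theta} M_1^\theta \, \|f\|_{L^{p'}}\,\|g\|_{L^{s'}},
\]
where $s'$ is the conjugate exponent of $q'$, uniformly over simple functions $g$ on $Y$. By homogeneity I may normalize both norms to be $1$. Writing $f = \sum_j |a_j| e^{i\alpha_j}\chi_{A_j}$ and $g = \sum_k |b_k| e^{i\beta_k}\chi_{B_k}$ as simple functions, I would next introduce the holomorphic families
\[
f_z = \sum_j |a_j|^{p'\left(\tfrac{1-z}{p_0}+\tfrac{z}{p_1}\right)} e^{i\alpha_j}\chi_{A_j}, \qquad g_z = \sum_k |b_k|^{s'\left(\tfrac{1-z}{s_0}+\tfrac{z}{s_1}\right)} e^{i\beta_k}\chi_{B_k},
\]
where $s_0,s_1$ are the conjugate exponents of $q_0,q_1$, so that $f_\theta = f$ and $g_\theta = g$. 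Then $F(z) := \int_Y (Tf_z)\,g_z \, d\nu$ is an entire function of $z$ (since $T$ acts linearly on each of finitely many indicator functions and the $z$-dependence of each coefficient is entire), and is bounded on the closed strip $0\le \mathrm{Re}(z)\le 1$.

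The two crucial endpoint computations are on the vertical lines. On $\mathrm{Re}(z)=0$, a direct calculation gives $\|f_z\|_{L^{p_0}} = \|f\|_{L^{p'}}^{p'/p_0} = 1$ and $\|g_z\|_{L^{s_0}} = \|g\|_{L^{s'}}^{s'/s_0} = 1$, so by the hypothesis $\|Tf_z\|_{L^{q_0}} \le M_0$ and Hölder applied with the pair $(q_0,s_0)$ yields $|F(z)| \le M_0$. The same computation on $\mathrm{Re}(z)=1$ gives $|F(z)|\le M_1$. By Hadamard's three-lines theorem applied to $F$, one concludes $|F(\theta)| \le M_0^{1-\theta} M_1^\theta$, which is exactly the bilinear estimate at $z=\theta$. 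Taking the supremum over simple $g$ with $\|g\|_{L^{s'}}=1$ and using the fact that simple functions are dense in $L^{s'}$ gives the claimed bound for $Tf$ in $L^{q'}$ for every simple $f$.

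The only real obstacle is bookkeeping at the endpoints $p_0,p_1,q_0,q_1 \in \{1,\infty\}$: the exponent $s'$ on the bilinear side may be $1$ or $\infty$, in which case the Hölder pairing needs a slightly different argument (one works with $\|g_z\|_{L^\infty}$ directly, or approximates $L^1$-functions by simple ones bounded in the appropriate way). I would treat these endpoint cases separately, noting that the holomorphic family $g_z$ can be constructed so that $|g_z|$ is uniformly bounded when $s_0=\infty$ and so on. Once the simple-function estimate is established, the final sentence of the statement (density extension to all of $L^{p'}$) is automatic since simple functions are dense in $L^{p'}$ for $1\le p'<\infty$, and $T$ extends uniquely by continuity to a bounded map $L^{p'}\to L^{q'}$.
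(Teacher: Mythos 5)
The paper does not give a proof of this lemma: it simply recalls the classical Riesz--Thorin interpolation theorem in order to use the induced contractive homomorphism $\psi\colon B^{p,\ast}_L(X)\to C^*_L(X)$, so there is no paper proof to compare against. Your proposal is the standard complex-interpolation proof via Hadamard's three-lines theorem, and its outline is correct: reduce to the bilinear estimate $\bigl|\int_Y (Tf)\,g\,d\nu\bigr|\le M_0^{1-\theta}M_1^\theta$ by duality, introduce the holomorphic families $f_z$, $g_z$ with $f_\theta=f$, $g_\theta=g$, verify $\|f_z\|_{p_0}=\|g_z\|_{s_0}=1$ on $\mathrm{Re}\,z=0$ (and similarly at $\mathrm{Re}\,z=1$), apply H\"older and the endpoint hypotheses to bound $F(z)=\int (Tf_z)g_z$ on the two lines, invoke the three-lines theorem at $z=\theta$, then take a supremum over simple $g$ and extend to $L^{p'}$ by density. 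Two points that you flagged as "bookkeeping" should be stated carefully in a final write-up. First, the duality step $\|Tf\|_{q'}=\sup_{g}|\int (Tf)g|$ over simple $g$ with $\|g\|_{s'}\le1$ requires $q'<\infty$; when $q'=\infty$ (which forces $q_0=q_1=\infty$) one instead bounds $\|Tf_z\|_\infty$ directly and applies the three-lines theorem pointwise, so this subcase is genuinely different rather than a cosmetic variant. Second, boundedness of $F$ on the closed strip (required to apply three-lines) holds precisely because $f$ and $g$ are simple, making $Tf_z$ a finite linear combination of the fixed functions $T(\chi_{A_j})$ with scalar coefficients that are entire in $z$ and uniformly bounded on $0\le\mathrm{Re}\,z\le1$; this observation is what makes the argument work when $T$ is only a priori defined on simple functions, so it deserves to be said explicitly rather than left implicit. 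With those two items spelled out, the proof is complete.
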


    For any $p\in (1,\infty)$, let $q$ be the dual number of $p$, i.e. $1/p+1/q=1$. Let $p_0=q_0=p$, $p_1=q_1=q$ and $\theta=1/2$ in the above. By the Riesz-Thorin interpolation theorem, we have that each element $T\in\mathcal{B}^*(E^p_X)$ can be considered as an element  in $B(E^2_X)$. This correspondence induces a contractive homomorphism
                           $$\psi:B^{p,\ast}_L(X) \rightarrow C^*_L(X),$$
where $C^*_L(X)$ is the localization $C^*$-algebra of $X$.

\begin{proposition}\label{Thm:pIndependentLocalization}
    Let $X$ be a finite dimensional simplicial complex endowed with $\ell^1$-metric, then for any $p\in (1,\infty)$, $\psi$ induces an isomorphism between $K_*(B^{p,\ast}_L(X))$ and $K_*(C^*_L(X))$.
\end{proposition}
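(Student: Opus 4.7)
The plan is to prove the proposition by induction on $\dim X$, using the Mayer--Vietoris sequence for dual $L^p$ localization algebras established above, its analogue for localization $C^\ast$-algebras (which is standard), and the five lemma.

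\textbf{Base case.} When $\dim X=0$, $X$ is a countable discrete simplicial complex, and both $B^{p,\ast}_L(X)$ and $C^\ast_L(X)$ split as direct sums indexed by the vertices. It therefore suffices to treat $X=\mathrm{pt}$. In that case the propagation condition is vacuous and $B^{p,\ast}(\mathrm{pt})\cong \mathcal{K}^\ast(\ell^p)$, while $B^\ast(E^2_{\mathrm{pt}})\supseteq \mathcal{K}(\ell^2)$ is the usual compact operators. An Eilenberg swindle on $[0,\infty)$ together with the evaluation-at-zero map show that $K_\ast(B^{p,\ast}_L(\mathrm{pt}))\cong K_\ast(\mathcal{K}^\ast(\ell^p))$ and $K_\ast(C^\ast_L(\mathrm{pt}))\cong K_\ast(\mathcal{K}(\ell^2))$. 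By Corollary \ref{Cor:KtheoryLpCompact} both are $\Z$ in degree $0$ and vanish in degree $1$, and $\psi_\ast$ sends a rank-one idempotent to a rank-one idempotent. Hence $\psi_\ast$ is an isomorphism.

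\textbf{Inductive step.} Assume the proposition for all finite-dimensional complexes of dimension $<m$, and let $\dim X = m$. Following the construction in the proof of Lemma \ref{Lemma:VanishingControlledObstruction}, decompose
\[
X^{(m)}=X_1\cup X_2,\qquad X_1=\bigsqcup_\triangle \triangle_1,\qquad X_2=\bigsqcup_\triangle \triangle_2,
\]
where $X_1$ is strongly Lipschitz homotopy equivalent to the discrete set of barycenters, $X_2$ is strongly Lipschitz homotopy equivalent to $X^{(m-1)}$, and $X_1\cap X_2$ is strongly Lipschitz homotopy equivalent to a disjoint union of $(m-1)$-spheres. By strong Lipschitz homotopy invariance of both theories (Lemma \ref{Lemma:LipschitzHomotopy} and its analogue for $C^\ast_L$, together with the corresponding statement for $B^p_L$ already used in Section \ref{section:cutpaste}), the inductive hypothesis applies to $X_1$, $X_2$, and $X_1\cap X_2$.

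\textbf{Gluing.} Now apply the Mayer--Vietoris six-term exact sequence for $B^{p,\ast}_L$ established above, together with the analogous sequence for $C^\ast_L(X)$ (obtained identically, replacing the dual $L^p$-ideals $B^{p,\ast}_L(X_i;X)$ by their Hilbert-space counterparts $C^\ast_L(X_i;X)$ and using the $C^\ast$-version of Lemma \ref{Lemma:LocalizationIndepStar}). The contractive homomorphism $\psi$ maps the subalgebras $B^{p,\ast}_L(X_i;X)$ into $C^\ast_L(X_i;X)$ because $\psi$ preserves supports and propagations; this gives a morphism between the two Mayer--Vietoris sequences. On the corners corresponding to $X_1$, $X_2$, and $X_1\cap X_2$, the inductive hypothesis asserts that $\psi_\ast$ is an isomorphism, so the five lemma yields the isomorphism on $X=X_1\cup X_2$, completing the induction.

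\textbf{Main obstacle.} The delicate point is the naturality of the Mayer--Vietoris sequence under $\psi$: one must check that the cutting-and-pasting construction producing the boundary map commutes (up to the $K$-theoretic identifications) with passage from $B^{p,\ast}_L$ to $C^\ast_L$. Because the boundary map is built from characteristic functions of the subsets $\mathrm{bd}_{5r}(X_i)$ and from the isometries $V_f,V_f^+$ attached to the strong Lipschitz equivalences of Definition \ref{Def:ControlledBoundary}, and because $\psi$ is defined by simply viewing a dual-operator on $E^p_X$ as an operator on $E^2_X$ via Riesz--Thorin (hence preserves characteristic-function multiplications, supports and propagations), this compatibility reduces to checking that $\psi$ intertwines the ambient ideals $B^{p,\ast}_L(X_i;X)$ with $C^\ast_L(X_i;X)$; this follows from Lemma \ref{Lemma:LocalizationIndepStar} and its $C^\ast$-analogue, together with the strong Lipschitz homotopy invariance used inside the construction of $\partial$.
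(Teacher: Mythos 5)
Your proposal is correct and follows essentially the same strategy as the paper's own proof: induction on the dimension of the skeleton, the base case handled via Corollary~\ref{Cor:KtheoryLpCompact} and the fact that functions from $[0,\infty)$ into a Banach algebra have the same $K$-theory as the algebra, and the inductive step handled by the $\triangle_1/\triangle_2$ decomposition, strong Lipschitz homotopy invariance, the Mayer--Vietoris sequence, and the five lemma. Your added remarks on the naturality of the boundary map under $\psi$ (using that $\psi$ preserves supports, propagations, and multiplication by characteristic functions) are a welcome clarification of a step the paper leaves implicit.
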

\begin{proof}
    Let $X^{(n)}$ be the $n$-skeleton of $X$, we shall prove this theorem for $X^{(n)}$ by induction on $n$. \par
    When $n=0$, $K_*(B^{p,\ast}_L(X^{(0)}))$ equals to the direct product of $K_*(\mathcal{K}^*(\ell^p))$ and $K_*(C^*_L(X^{(0)}))$ equals to the direct product of $K_*(\mathcal{K}^*(l^2))$ using the fact that the algebra of all bounded and uniformly continuous functions from $[0,\infty)$ to a Banach algebra has the same $K$-theory as this Banach algebra. Then by Corollary \ref{Cor:KtheoryLpCompact}, $\psi_*$ is an isomorphism in this case. \par
    Assume by induction that the theorem holds when $n=m-1$. Next we shall prove the theorem holds when $n=m$. For each simplex $\triangle$ of dimension $m$ in $X$, we let
    $$\triangle_1=\{x\in \triangle: d(x,c(\triangle))\leq 1/100\},\triangle_2=\{x\in \triangle: d(x,c(\triangle))\geq 1/100\},$$
where $c(\triangle)$ is the center of $\triangle$. \par
    Let
      $$X_1=\bigcup_{\triangle:\text{simplex of dimension $m$ in X}} \triangle_1;$$
      $$X_2=\bigcup_{\triangle:\text{simplex of dimension $m$ in X}} \triangle_2.$$
Notice that:
\begin{enumerate}
\item $X_1$ is strongly Lipschitz homotopy equivalent to
                  $$\{c(\triangle): \text{$\triangle$ is $m$-dimensional simplex in X}\};$$
\item $X_2$ is strongly Lipschitz homotopy equivalent to $X^{(m-1)}$;\\
\item $X^{(m)}=X_1 \cup X_2$ and $X_1 \cap X_2$ is the disjoint union of the boundaries of all $m$-dimensional $\triangle_1$ in $X^{(m)}$.
\end{enumerate}
    (1) and (2) together with the strongly Lipschitz homotopy invariance of the dual $L^p$-$K$-homology and the induction hypothesis, imply that the theorem holds for $X_1$ and $X_2$.\par
    By the strongly Lipschitz homotopy invariance of $K_*(B^{p,\ast}_L(X))$, $K_*(C^*_L(X))$ and the cutting and pasting exact sequence, we also know that our lemma holds for $X_1\cap X_2$. \par
    Thus we can complete our induction process by using the cutting and pasting exact sequence and the five lemma.
\end{proof}

    Using a similar argument for $\phi$, we have the following theorem.
\begin{proposition}\label{Thm:pIndenpendentStarLocalization}
    Let $X$ be a finite dimensional simplicial complex endowed with the $\ell^1$-metric, then for any $p\in (1,\infty)$, $\phi$ induces an isomorphism between $K_*(B^{p,\ast}_L(X))$ and $K_*(B^p_L(X))$.
\end{proposition}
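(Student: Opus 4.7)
The plan is to mimic exactly the argument given for Proposition \ref{Thm:pIndependentLocalization}: perform induction on $n$ over the skeleta $X^{(n)}$, decompose each skeleton as $X^{(m)} = X_1 \cup X_2$ using small caps $\triangle_1$ around barycentres and their complements $\triangle_2$, and apply the five lemma to the Mayer--Vietoris sequences for $B^{p,\ast}_L(\cdot)$ and $B^p_L(\cdot)$ joined by the natural transformation $\phi_\ast$. The key observation making this work is that $\phi$ is natural under inclusions of subcomplexes and commutes with the $\mathrm{Ad}((V_f,V_f^+))$ construction (since the isometries from Lemma \ref{Lemma:CoveringIsometry} are dual-operators), so it intertwines the strongly Lipschitz homotopy invariance of the two theories and the Mayer--Vietoris boundary maps.

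For the base case $n=0$, the algebra of bounded uniformly norm-continuous functions $[0,\infty)\to A$ has the same $K$-theory as $A$, so I would reduce to showing that the inclusion $\mathcal{K}^\ast(\ell^p)\hookrightarrow \mathcal{K}(\ell^p)$ induces an isomorphism on $K$-theory. By Corollary \ref{Cor:KtheoryLpCompact} the left side is $\mathbb{Z}$ in degree $0$ (generated by a rank-one idempotent) and $0$ in degree $1$. The right side is computed in exactly the same way via Phillips's approximation result used in Lemma \ref{Lemma:LpCompact}, which for $p\in(1,\infty)$ realises $\mathcal{K}(\ell^p)$ as a direct limit of matrix algebras; the inclusion sends the canonical rank-one idempotent to the canonical rank-one idempotent, so $\phi_\ast$ is an isomorphism at $n=0$.

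For the inductive step, assume the statement holds on $X^{(m-1)}$. Then $X_1$ is strongly Lipschitz homotopy equivalent to a $0$-dimensional space and $X_2$ is strongly Lipschitz homotopy equivalent to $X^{(m-1)}$, so by the strongly Lipschitz homotopy invariance of both $L^p$ and dual $L^p$ localization $K$-homology (Lemma \ref{Lemma:LipschitzHomotopy} and its $L^p$-analogue) together with the induction hypothesis, $\phi_\ast$ is an isomorphism on $X_1$ and $X_2$. The intersection $X_1\cap X_2$ is a disjoint union of boundaries of the small simplices $\triangle_1$, each of which is strongly Lipschitz homotopy equivalent to a space of smaller dimension, so the induction hypothesis again yields that $\phi_\ast$ is an isomorphism on $X_1\cap X_2$. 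Feeding these isomorphisms into the commutative ladder obtained from the two Mayer--Vietoris sequences and applying the five lemma completes the induction.

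The main obstacle I anticipate is verifying the base case cleanly: one must check that for $p\in(1,\infty)$ the inclusion of dual compact operators into all compact operators on $\ell^p$ sends a rank-one idempotent to a $K$-theory generator, and that both algebras genuinely are direct limits of matrix algebras under the truncations $F_n\cdot F_n$. This relies essentially on the approximation property of $\ell^p$ (used already in Lemma \ref{Lemma:LpCompact} and failing at $p=1$), which is why the statement is restricted to $p\in(1,\infty)$. A secondary, but purely bookkeeping, issue is checking naturality of the Mayer--Vietoris boundary map under $\phi$; this should follow directly from the fact that both sequences are built from the same cutting-and-pasting decomposition and that $\phi$ is defined by restricting the maximal norm to the $L^p$ norm, so it is compatible with $\chi_{X_i}$-truncations and with the isometries used in defining $\mathrm{Ad}((V_f,V_f^+))$.
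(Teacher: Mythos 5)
Your proposal is correct and follows essentially the same route as the paper, which in fact dispatches this proposition with a single line ("Using a similar argument for $\phi$") pointing back to the inductive cutting-and-pasting proof of Proposition \ref{Thm:pIndependentLocalization}. You have fleshed out the intended argument, including the base case identification of $K_*(\mathcal{K}(\ell^p))$ via the same Phillips approximation argument underlying Lemma \ref{Lemma:LpCompact} and the naturality of $\phi$ with respect to the Mayer--Vietoris ladder and the strongly Lipschitz homotopy invariance.
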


    By Propositions \ref{Thm:pIndependentLocalization} and \ref{Thm:pIndenpendentStarLocalization}, we obtain that the $K$-theory for $L^p$ localization algebra is independent of $p$ for a finite dimensional simplicial complex. This give a partial answer to the question 26 in \cite{ChungNowakPCBC} proposed by Chung and Nowak.
\begin{proposition}
    Let $X$ be a finite dimensional simplicial complex endowed with the $\ell^1$-metric, $K_*(B^p_L(X))$ does not depend on $p\in (1,\infty)$.
\end{proposition}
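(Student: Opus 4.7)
The plan is to derive this as an immediate corollary of the two preceding propositions, which together provide a bridge between the $L^p$ localization algebra and the classical ($p=2$) localization $C^\ast$-algebra via the dual $L^p$ localization algebra.

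First, I would invoke Proposition \ref{Thm:pIndenpendentStarLocalization}, which tells us that the inclusion-type homomorphism $\phi\colon B^{p,\ast}_L(X) \to B^p_L(X)$ induces an isomorphism
$$\phi_*\colon K_*(B^{p,\ast}_L(X)) \xrightarrow{\ \cong\ } K_*(B^p_L(X))$$
for every $p \in (1,\infty)$. Next, I would invoke Proposition \ref{Thm:pIndependentLocalization}, which gives, via the Riesz--Thorin interpolation homomorphism $\psi$, an isomorphism
$$\psi_*\colon K_*(B^{p,\ast}_L(X)) \xrightarrow{\ \cong\ } K_*(C^*_L(X))$$
also for every $p \in (1,\infty)$. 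Composing, we obtain for each $p \in (1,\infty)$ a canonical isomorphism $K_*(B^p_L(X)) \cong K_*(C^*_L(X))$.

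Since the right-hand side $K_*(C^*_L(X))$ involves only the classical localization $C^\ast$-algebra and is manifestly independent of $p$, the left-hand side must be independent of $p$ as well. Concretely, for any $p_1,p_2 \in (1,\infty)$ the composition $\phi_* \circ \psi_*^{-1} \circ \psi_* \circ \phi_*^{-1}$ (with appropriate choices at $p_1$ and $p_2$) yields an isomorphism $K_*(B^{p_1}_L(X)) \cong K_*(B^{p_2}_L(X))$.

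There is essentially no obstacle here: all the work has been done in the two previous propositions (and ultimately rests on the induction over skeleta combined with strongly Lipschitz homotopy invariance, the Mayer--Vietoris sequence, and the base case computation of $K$-theory of $\mathcal{K}^*(\ell^p)$ via Lemma \ref{Lemma:LpCompact} and Corollary \ref{Cor:KtheoryLpCompact}). The only thing to emphasize is that the dual $L^p$ localization algebra $B^{p,\ast}_L(X)$ acts as an intermediary that simultaneously receives a map from the $p=2$ world (through $\psi$, made possible by Riesz--Thorin) and sits inside the $L^p$ world (through $\phi$), and that both maps are $K$-theory isomorphisms on finite-dimensional simplicial complexes with the $\ell^1$-metric.
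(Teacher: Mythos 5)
Your proposal is correct and matches the paper's own argument exactly: the paper derives this proposition directly from Propositions \ref{Thm:pIndependentLocalization} and \ref{Thm:pIndenpendentStarLocalization}, using the dual $L^p$ localization algebra as the intermediary bridging $B^p_L(X)$ and $C^*_L(X)$. There is nothing further to add.
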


    Further more, we have the following $p$-indenpendency of $K$-theory for $L^p$ Roe algebra.
\begin{corollary}
    Let $X$ be a proper metric space, assume that there exists an anti-\v Cech system $\{C_k\}_k$ for $X$ such that $N_{C_k}$ is a finite dimensional simplicial complex for all $k$. Then if for all $p\in (1,\infty)$, the $L^p$ coarse Baum-Connes conjecture is true for $X$, we have that $K_*(B^p(X))$ does not depend on $p$.
\end{corollary}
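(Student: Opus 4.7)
The plan is to combine the two main ingredients already established in this section: the $L^p$ coarse Baum-Connes conjecture (used to reduce to the localization algebras of the nerve spaces) and Proposition \ref{Thm:pIndependentLocalization} together with Proposition \ref{Thm:pIndenpendentStarLocalization} (which give $p$-independence of the $K$-theory of $B^p_L(N_{C_k})$ via the bridge $B^{p,\ast}_L(N_{C_k})\to C^\ast_L(N_{C_k})$).

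First, under the standing hypothesis that the $L^p$ coarse Baum-Connes conjecture holds for $X$ for every $p\in(1,\infty)$, the evaluation-at-zero map yields, for each such $p$,
\[
K_*(B^p(X))\cong \lim_{k\to\infty} K_*(B^p_L(N_{C_k})).
\]
Since each $N_{C_k}$ is a finite dimensional simplicial complex endowed with the $\ell^1$-metric, Propositions \ref{Thm:pIndependentLocalization} and \ref{Thm:pIndenpendentStarLocalization} give, for every $k$, a chain of isomorphisms
\[
K_*(B^p_L(N_{C_k}))\xleftarrow{\phi_*} K_*(B^{p,\ast}_L(N_{C_k}))\xrightarrow{\psi_*} K_*(C^\ast_L(N_{C_k})),
\]
so each group $K_*(B^p_L(N_{C_k}))$ is canonically identified (via $\psi_*\circ\phi_*^{-1}$) with the $p$-free group $K_*(C^\ast_L(N_{C_k}))$.

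The remaining issue, which I expect to be the main (and only nontrivial) obstacle, is naturality of these identifications with respect to the structure maps of the inductive system. The connecting homomorphism $K_*(B^p_L(N_{C_{k_1}}))\to K_*(B^p_L(N_{C_{k_2}}))$ is induced by $\mathrm{Ad}((V_{i_{k_1 k_2}},V^+_{i_{k_1 k_2}}))$ for the simplicial map $i_{k_1 k_2}:N_{C_{k_1}}\to N_{C_{k_2}}$, and the point is that the isometries $V_{i_{k_1 k_2}}$ and $V^+_{i_{k_1 k_2}}$ constructed in Lemma \ref{Lemma:CoveringIsometry} can be chosen simultaneously as bona fide dual-operators: indeed, the construction proceeds by a disjoint Borel partition and produces permutation-type partial isometries on basis vectors, which are dual-operators and therefore define bounded operators on $L^p$ and on $L^2$ with the same underlying matrix entries. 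Consequently the same pair $(V_{i_{k_1 k_2}},V^+_{i_{k_1 k_2}})$ induces compatible homomorphisms on $B^p_L(\cdot)$, on $B^{p,\ast}_L(\cdot)$, and on $C^\ast_L(\cdot)$, and the inclusion homomorphisms $\phi$ and $\psi$ intertwine them on the nose.

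With naturality in hand, the isomorphisms $\psi_*\circ\phi_*^{-1}$ assemble into an isomorphism of inductive systems, hence of inductive limits. Passing to the limit and combining with the first display gives
\[
K_*(B^p(X))\cong \lim_{k\to\infty} K_*(B^p_L(N_{C_k}))\cong \lim_{k\to\infty} K_*(C^\ast_L(N_{C_k})),
\]
and the right-hand side manifestly does not depend on $p\in(1,\infty)$. This completes the proof.
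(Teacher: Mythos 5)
Your proof is correct and follows the route the paper implicitly takes: the paper states this corollary without a separate proof, evidently because it is exactly the chain you spell out — the $L^p$ coarse Baum--Connes conjecture identifies $K_*(B^p(X))$ with $\lim_k K_*(B^p_L(N_{C_k}))$, and the preceding proposition (via the zig-zag $B^p_L \xleftarrow{\phi} B^{p,\ast}_L \xrightarrow{\psi} C^\ast_L$) makes each term $p$-independent. The one point worth emphasizing is that you rightly flag the naturality issue, which the paper leaves implicit; your remedy is exactly the correct one. The pair $(V_{i_{k_1k_2}}, V^+_{i_{k_1k_2}})$ produced by Lemma~\ref{Lemma:CoveringIsometry} is built from a disjoint Borel partition and, on each piece, an identification of two separable $\ell^p$-spaces; such identifications can be realized by permutation matrices on basis vectors, which are dual-operators and act compatibly on $E_X^p$, on $(E_X^p,\|\cdot\|_{\max})$, and on $E_X^2$. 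Conjugation by them therefore yields a commuting ladder of inductive systems that $\phi$ and $\psi$ intertwine, and passing to the limit gives the $p$-independence. This is the intended argument, just with the implicit step made explicit.
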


    By the Theorem \ref{Thm:LpBCCFAD}, we have the following theorem.
\begin{theorem}\label{Thm:MainThmPIndpendency}
    Let $X$ be a proper metric space. If $X$ has finite asymptotic dimension, then $K_*(B^p(X))$ does not depend on $p$ for $p\in (1,\infty)$.
\end{theorem}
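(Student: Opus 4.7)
The plan is to assemble the machinery developed earlier in the paper: the $L^p$ coarse Baum--Connes conjecture for spaces of finite asymptotic dimension (Theorem \ref{Thm:LpBCCFAD}) together with the two $p$-independence results for localization algebras of finite-dimensional simplicial complexes (Propositions \ref{Thm:pIndependentLocalization} and \ref{Thm:pIndenpendentStarLocalization}). The corollary immediately preceding the theorem already packages almost everything, so the task reduces to verifying that a proper metric space $X$ with finite asymptotic dimension admits an anti-\v{C}ech system whose nerves are uniformly finite-dimensional simplicial complexes.

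First I would invoke Construction \ref{construct:anticech}: if $\asdim X = m$, then we obtain an anti-\v{C}ech system $\{C'_n\}_{n=0}^\infty$ of $X$ with the property that the $r$-multiplicity of each cover is bounded by $m+1$. Consequently every nerve $N_{C'_n}$ is a simplicial complex of dimension at most $m$, and so fits the hypothesis of Propositions \ref{Thm:pIndependentLocalization} and \ref{Thm:pIndenpendentStarLocalization}. Endow each $N_{C'_n}$ with the $\ell^1$-metric, exactly as required in those propositions.

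Next I would use Theorem \ref{Thm:LpBCCFAD}: for every $p \in (1,\infty)$, the $L^p$ coarse Baum--Connes assembly map
\[
e_0\colon \lim_{n\to\infty} K_*(B_L^p(N_{C'_n})) \longrightarrow K_*(B^p(X))
\]
is an isomorphism. Combining Propositions \ref{Thm:pIndependentLocalization} and \ref{Thm:pIndenpendentStarLocalization}, for each fixed $n$ the composition $\psi_* \circ \phi_*^{-1}$ gives a canonical isomorphism $K_*(B_L^p(N_{C'_n})) \cong K_*(C_L^*(N_{C'_n}))$ that is independent of $p \in (1,\infty)$ in the sense that the target has no $p$ in it. Taking direct limits and using the identification from the coarse Baum--Connes isomorphism yields
\[
K_*(B^p(X)) \;\cong\; \lim_{n\to\infty} K_*(C_L^*(N_{C'_n})),
\]
whose right-hand side is manifestly independent of $p$.

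The one point that requires verification, and which I would expect to be the main (though mild) obstacle, is the compatibility of the isomorphisms $\psi_*$ and $\phi_*$ with the connecting maps $\mathrm{Ad}((V_{i_{n_1 n_2}}, V_{i_{n_1 n_2}}^+))_*$ used to form the inductive systems on $L^p$, dual-$L^p$, and Hilbert-space localization algebras. This is a naturality check: the isometries $V_{i_{n_1 n_2}}$ and their contractive partners constructed in Lemma \ref{Lemma:CoveringIsometry} are dual-operators and hence restrict simultaneously to $B_L^{p,\ast}$, $B_L^p$ and $C_L^\ast$ with the same supports, so the induced homomorphisms strictly intertwine $\phi$ and $\psi$. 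Hence the diagrams commute and passage to the direct limit preserves the isomorphisms, completing the proof.
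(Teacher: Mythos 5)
Your proof is correct and follows essentially the same route as the paper: Construction \ref{construct:anticech} gives an anti-\v{C}ech system with $m$-dimensional nerves, Theorem \ref{Thm:LpBCCFAD} identifies $K_*(B^p(X))$ with the direct limit of $K_*(B^p_L(N_{C'_n}))$, and Propositions \ref{Thm:pIndependentLocalization} and \ref{Thm:pIndenpendentStarLocalization} replace each term by $K_*(C^*_L(N_{C'_n}))$. The paper compresses this into the corollary immediately preceding the theorem and does not spell out the naturality of $\phi_*$ and $\psi_*$ with respect to the connecting maps $\mathrm{Ad}((V_{i_{n_1n_2}},V^+_{i_{n_1n_2}}))_*$; your explicit observation that the covering isometries of Lemma \ref{Lemma:CoveringIsometry} are dual-operators acting simultaneously on all three classes of localization algebras, so that $\phi$ and $\psi$ intertwine the inductive systems, is exactly the right check and a welcome addition.
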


\section{Open problems}
In this last section, we list several interesting open problems.
\begin{question}
 There are several versions of $L^p$ $K$-homology. Are they all the same?
\end{question}
There are many different versions of $K$-homology
        \begin{enumerate}
            \item Kasparov's $K$-homology \cite{KasparovKhomology}
            \item $K$-theory of dual algebra by Paschke \cite{PaschkeDuality}
            \item $K$-theory of localization algebra of Guoliang Yu \cite{YuLocalization}\cite{QiaoRoe}
            \item Localization $K$-homology by Xiaoman Chen and Qin Wang \cite{ChenWangLocalization}
            \item $E$-theory by Connes and Higson  \cite{ConnesHigson}
        \end{enumerate}
        In the $L^2$ case, all above concepts are the same.
        In this paper, we have seen that the $L^p$ counterpart of all above notions may be equivalent for finite dimensional simplicial complex. But we are not very optimistic that they are equivalent for general topological spaces. To prove the equivalence, we need some deep theorems, say the Voiculescu Theorem \cite{Voiculescu} and the Kasparov Technical Lemma \cite{Kasparov88}, for $L^p$ spaces.
     \begin{question}
     Is it possible to prove that the $K$-theory of $L^p$ Roe algebras are independent of $p$ without using the coarse Baum-Connes conjecture?
     \end{question}
     Up to now, all the results about the $p$ indenpendence of the $K$-theory of the group algebras, crossed products and Roe algebras rely on the Baum-Connes conjecture or the coarse Baum-Connes conjecture, since the $K$-homology sides are easier to maneuver. A more direct approach without using the (coarse) Baum-Connes conjecture would shed some light on a Banach algebra approach to the (coarse) Baum-Connes conjecture. For example, if we know certain groups admitting proper isometric actions on $L^p$-spaces and the $K$-theory of their $L^p$ group algebras does not depend on $p$, by the result of Kasparov and Yu \cite{KasparovYuPBC}, we can verify the Baum-Connes conjecture for these groups.
     \begin{question}
     Can we develop an $L^p$ version of Dirac-dual-Dirac method for the $L^p$ Baum-Connes conjecture for amenable groupoids?
    \end{question}
    In \cite{TuATmenable}, Tu showed the Baum-Connes conjecture is true for amenable groupoids, or more generally, a-T-menable groupoids. For a space with finite asymptotic dimension, or more generally finite decomposition complexity, the coarse groupoid is amenable. For a dynamical system with finite dynamical complexity, the corresponding transformation groupoid is also amenable \cite{GuentnerWillettYuFDC}. It would be great if we can modify Tu's method to deal with $L^p$ groupoid algebras and give a unified proof for Chung's result on $L^p$ crossed products and the results in this paper on $L^p$ Roe algebras.
 \begin{question}
    Does the $L^p$ coarse Baum-Connes conjecture hold for spaces coarsely embeddable into Hilbert spaces?
 \end{question}
    Recently, Shan and Wang verified the $L^p$ coarse Novikov conjecture for spaces coarsely embeddable into simply connected nonpositively curved manifolds \cite{ShanWang}. The key ingredient in the proof is the $L^p$ version of Yu's twisted Roe algebra technique \cite{YuEmbedding}. Shan and Wang's theorem is the first positive result on the injectivity of the $L^p$ coarse Baum-Connes conjecture using Yu's technique. Can we also use the $L^p$ version of Yu's technqiue to give a surjective argument when the space is coarsely embeddable into a Hilbert space? More generally, can we prove the $L^p$ coarse Baum-Connes conjecture for spaces coarsely embeddable into $L^p$ spaces?
 \begin{question}
    What will happen if we use $L^p(X,\nu)$ or a general $L^p$-space $E$ as $L^p$-$X$-module to define $L^p$ Roe algebra and $L^p$ localization algebra?
 \end{question}
    The proof of Lemma \ref{Lemma:CoveringIsometry} does not work for this broader definition. Thus we need to find a new way to construct the homomorphism between $L^p$ Roe algebras covering the map between the underlying spaces.
 \begin{question}
 Are there any topological and geometric implications of the $L^p$ (coarse) Baum-Connes conjecture?
 \end{question}
 For example, does it imply the Gromov conjecture \cite{GromovConjecture} that uniformly contractible manifolds with bounded geometry admit no uniform positive scalar curvature?
 \begin{question}
 Are there any counter-examples for the injectivity of the $L^p$ coarse Baum-Connes conjecture?
 \end{question}
 In \cite{HigsonLafforgueSkandalis}\cite{WillettYuExpander}, Higson-Lafforgue-Skandalis and Willett-Yu showed that Magulis type expanders and expanders with large girth are counter-examples for the surjectivity of the coarse Baum-Connes conjecture. In \cite{ChungNowakPCBC},  Chung and Nowak showed that Margulis type expanders are still counter-example for the $L^p$ coarse Baum Connes conjecture. However, the existence of injectivity counterexample of the $L^p$ coarse Baum Connes conjecture is still open.
 In \cite{YuFAD}, Guoliang Yu gave a counterexample of the injectivity of coarse Baum-Connes conjecture. The proof relies on a positive scalar curvature argument. Is Yu's counter-example still a counter-example for the $L^p$ version?

\bibliographystyle{plain}
\bibliography{LpCBC}

\end{document}